\theoremstyle{plain}
\newtheorem{thm}{Theorem}[section]
\newtheorem{prop}[thm]{Proposition}
\newtheorem{lem}[thm]{Lemma}
\newtheorem{cor}[thm]{Corollary}
\newtheorem{conj}[thm]{Conjecture}
\newtheorem{prob}[thm]{Problem}
\theoremstyle{definition}
\newtheorem{defn}[thm]{Definition}
\newtheorem{eg}[thm]{Example}
\theoremstyle{remark}
\newtheorem{rem}[thm]{Remark}
\DeclareMathOperator{\Def}{Def}
\DeclareMathOperator{\Exc }{Exc}
\DeclareMathOperator{\Ext}{Ext}
\DeclareMathOperator{\Sing}{Sing}
\DeclareMathOperator{\Spec}{Spec}
\DeclareMathOperator{\depth}{depth}
\DeclareMathOperator{\Ker}{Ker}
\DeclareMathOperator{\Image}{Im}
\DeclareMathOperator{\Gal}{Gal}
\DeclareMathOperator{\mult}{mult}
\DeclareMathOperator{\id}{id}
\DeclareMathOperator{\Hom}{Hom}
\DeclareMathOperator{\wt}{wt}
\DeclareMathOperator{\ES}{ES}
\DeclareMathOperator{\Tot}{Tot}
\DeclareMathOperator{\pr}{pr}
\DeclareMathOperator{\Art}{Art}
\begin{document}

\title
[Deforming elephants]
{Deforming elephants of $\mathbb{Q}$-Fano threefolds}
\subjclass[2010]{Primary  14B07, 14J30, 14J45; Secondary 14B15}
\keywords{deformation of pairs, $\mathbb{Q}$-Fano 3-folds, simultaneous $\mathbb{Q}$-smoothings, elephants}

\author{Taro Sano}
\address{Department of Mathematics, Faculty of Science, Kobe University, Kobe, 657, Japan}
\email{tarosano@math.kobe-u.ac.jp}

\maketitle

\begin{abstract}
We study deformations of a pair of a $\mathbb{Q}$-Fano $3$-fold $X$ with only terminal singularities and its elephant $D \in |{-}K_X|$. 
We prove that, if there exists $D \in |{-}K_X|$ with only isolated singularities, 
 the pair $(X,D)$ can be deformed to a pair of a $\mathbb{Q}$-Fano $3$-fold with only quotient singularities and a Du Val elephant. 
When there are only non-normal elephants, 
we reduce the existence problem of such a deformation to a local problem around the singular locus of the elephant.  
We also give several examples of $\mathbb{Q}$-Fano $3$-folds without Du Val elephants.  
\end{abstract}

\tableofcontents

\section{Introduction}
	In this paper, we consider algebraic varieties over the complex number field $\mathbb{C}$ unless otherwise stated. 
	
The main object of the study in this paper is an elephant of a $\mathbb{Q}$-Fano $3$-fold. 
A {\it $\mathbb{Q}$-Fano $3$-fold} is a normal projective $3$-fold with only terminal singularities whose anticanonical divisor is ample. 
For a normal variety $X$, a member of the anticanonical linear system $|{-}K_X|$ is called an {\it elephant} of $X$. 

The existence of a smooth elephant plays an important role in the classification of smooth Fano $3$-folds (cf. \cite{Iskovskikh1}, \cite{Iskovskikh2}). 
Shokurov and Reid proved that a Fano $3$-fold with only canonical Gorenstein singularities contains an elephant with only Du Val singularities. 
By using this result, Mukai (\cite{mukai}) classified the ``indecomposable'' Fano $3$-folds with 
canonical Gorenstein singularities. 

A $\mathbb{Q}$-Fano $3$-fold is one of the end products of the Minimal Model Program and it has non-Gorenstein singularities. 
There are much more families of $\mathbb{Q}$-Fano $3$-folds than Gorenstein ones and their classification is not completed.  

In the non-Gorenstein case, a $\mathbb{Q}$-Fano $3$-fold $X$ may have empty anticanonical system or  
 have only non-Du Val elephants even if $|{-}K_X| \neq \emptyset$. Actually, such examples are given in \cite{Iano-Fletcher}, \cite[4.8.3]{ABR} (See also Section \ref{examplesection}).  
Moreover, although $3$-fold terminal singularities are already classified (\cite{YPG}, \cite{mori}), there are still many of them and it is complicated to treat them. 

Locally, a $3$-fold terminal singularity can be deformed to a variety with quotient singularities (\cite[(6.4)]{YPG}).
It is easier to treat $\mathbb{Q}$-Fano $3$-folds with only quotient singularities and with Du Val elephants.  
For example, Takagi (\cite{TakagiDuVal}) established a bound of $h^0(X, -K_X)$ of ``primary'' $\mathbb{Q}$-Fano $3$-folds $X$ with these conditions 
and classified such $\mathbb{Q}$-Fano $3$-folds with   
$h^0(X,-K_X) =9, 10$. 

There are several attempts to reduce to such treatable situations. 
Alexeev proved that, if $|{-}K_X |$ sufficiently moves, a $\mathbb{Q}$-Fano $3$-fold with only non-Du Val elephants 
is birational to one with a Du Val elephant (\cite[Theorem 4.3]{AlexeevElephant},\cite[Theorem 11.1.8]{IPencyclopedia}).   
As a deformation-theoretic approach, there is the following conjecture by Alt{\i}nok--Brown--Reid \cite[4.8.3]{ABR}. 

\begin{conj}\label{simulqsmconj}
Let $X$ be a $\mathbb{Q}$-Fano $3$-fold. Then the following hold.  
\begin{enumerate}
\item[(i)] There exists a deformation $f \colon \mathcal{X} \rightarrow \Delta^1$ of $X$ over a unit disc such that the general fiber $\mathcal{X}_t$ 
is quasi-smooth for $t \neq 0$, that is, it has only quotient singularities. (Such a deformation of $X$ is called a {\bf $\mathbb{Q}$-smoothing} of $X$.) 
\item[(ii)] Assume that $|{-} K_X|$ contains an element $D$. Then there exists a deformation $f \colon (\mathcal{X}, \mathcal{D}) \rightarrow \Delta^1$ 
of the pair $(X,D)$ 
such that $\mathcal{X}_t$ is quasi-smooth and $\mathcal{D}_t \in |{-}K_{\mathcal{X}_t}|$ has Du Val singularities only on the singularities of $\mathcal{X}_t$
for $t \neq 0$. (Such a deformation of $(X,D)$ is called a {\bf simultaneous $\mathbb{Q}$-smoothing} of a pair $(X,D)$. 
See also Definition \ref{simultaneousQsmoothingdefn}.) 
\end{enumerate}
\end{conj}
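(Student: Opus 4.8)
The plan is to prove part (ii) under the hypothesis of the abstract, that some $D \in |{-}K_X|$ has only isolated singularities; I do not expect to reach the full conjecture, and the non-normal case should only reduce to a local problem. The strategy is to deform the pair $(X,D)$ directly — its deformations being governed by a tangent space $T^1_{(X,D)}$, an obstruction space $T^2_{(X,D)}$, and the local-to-global spectral sequence $H^p(X, \mathcal{T}^q_{(X,D)}) \Rightarrow T^{p+q}_{(X,D)}$ — by producing a local simultaneous $\mathbb{Q}$-smoothing at each singular point and then globalizing it.

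The decisive feature is that the elephant is an \emph{ample} divisor with $K_X + D \sim 0$. Two consequences follow. First, the log canonical bundle is trivial: $\Omega^3_X(\log D) \cong \omega_X(D) \cong \mathcal{O}_X$, so on the smooth locus $T_X({-}\log D) \cong \Omega^2_X(\log D)$, and since the singularities are isolated this isomorphism of reflexive sheaves extends over all of $X$. Second, $H^i(X, {-}K_X) = 0$ for $i>0$ by Kawamata--Viehweg (writing ${-}K_X = K_X + ({-}2K_X)$ with $-2K_X$ ample), so anticanonical sections deform freely and a deformation of $X$ automatically carries the elephant along.

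The heart of the argument is the vanishing of the global obstruction $H^2(X, T_X({-}\log D)) \cong H^2(X, \Omega^2_X(\log D))$. Because $D \sim {-}K_X$ is ample, the complement $U := X \setminus D$ is affine of dimension $3$, so $H^4(U, \mathbb{C}) = 0$; and as $H^2(X, \Omega^2_X(\log D))$ is a graded piece of the mixed Hodge structure on $H^4(U)$, it too vanishes. (In the singular setting I would run this on a log resolution, using that terminal singularities are rational to descend the computation.) The isolated-singularities hypothesis makes $\mathcal{T}^{\geq 1}_{(X,D)}$ skyscraper, so $H^0(X, \mathcal{T}^1_{(X,D)})$ is a direct sum of local modules and $H^1(X, \mathcal{T}^1_{(X,D)}) = 0$; since the local $\mathbb{Q}$-smoothings of terminal singularities are themselves unobstructed, any collection of such local first-order directions then lifts, against these vanishing obstructions, to an unobstructed global class in $T^1_{(X,D)}$ that integrates to a one-parameter family. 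The required local directions — smoothing each terminal singularity to a quotient singularity while the elephant becomes Du Val — are supplied by the classification of terminal $3$-fold singularities (\cite{YPG}, \cite{mori}) and their local $\mathbb{Q}$-smoothings (\cite[(6.4)]{YPG}).

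The step I expect to be hardest is transporting the two sheaf-theoretic facts above into the singular category: establishing $T_X({-}\log D) \cong \Omega^2_X(\log D)$ and the vanishing $H^2(X, \Omega^2_X(\log D)) = 0$ for \emph{reflexive} log differentials on a terminal $3$-fold with a singular elephant, which I would attack via a log resolution and the rationality of the singularities. The isolated-singularities hypothesis is exactly what makes $\mathcal{T}^{\geq 1}_{(X,D)}$ zero-dimensional and the globalization clean; when $D$ is non-normal its singular locus is a curve, these sheaves are no longer skyscrapers, and the cohomological bookkeeping collapses. For that case I would aim only to reduce the existence of a simultaneous $\mathbb{Q}$-smoothing to a local statement along the non-normal locus, which I expect to remain open.
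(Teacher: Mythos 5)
Your proposal is essentially the ``direct approach'' that the paper explicitly identifies and rejects in the introduction: \emph{``One direct approach is to try to prove $H^2(X, \Theta_{X}(- \log D)) =0$. However, this strategy does not work well.''} The genuine gap is your central vanishing claim. The mixed Hodge theory argument ($X\setminus D$ affine of dimension $3$, hence $H^4(X\setminus D,\mathbb{C})=0$) computes the cohomology of \emph{log differentials on a log resolution} $\mu\colon\tilde X\to X$ with SNC boundary $\tilde D+E$; it gives $H^2(\tilde X,\Omega^2_{\tilde X}(\log \tilde D+E))=0$, which is exactly what the paper uses. But this does not descend to the reflexive sheaf $\Omega^{[2]}_X(\log D)\cong\Theta_X(-\log D)$ on the singular $X$: the two groups differ by the local cohomology $H^2_{E}(\tilde X,\Omega^2_{\tilde X}(\log\tilde D+E))$ supported on the exceptional divisor, which is generically nonzero, and rationality of terminal singularities gives you no control over it. Since the cokernel of the restriction $T^1_{(X,D)}\to\bigoplus_i T^1_{(U_i,D_i)}$ is governed precisely by $H^2(X,\Theta_X(-\log D))$, your globalization step has no proof.

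The paper's workaround is the actual content of the theorem: instead of surjecting onto $\bigoplus_i T^1_{(U_i,D_i)}$, it surjects onto the images of the coboundary maps $\phi_i\colon T^1_{(U_i,D_i)}\to H^2_{E_i}(\tilde U_i,\Omega^2_{\tilde U_i}(\log\tilde D_i+E_i))$, using the vanishing on the resolution. The price is that the lifted global class only agrees with the chosen local smoothing direction modulo $\Ker\phi_i$. Two further ingredients, absent from your proposal, are then needed: (a) Lemma \ref{logkerm^2lem}, showing $\Ker\phi_i\subset\mathfrak{m}^2 T^1_{(U_i,D_i)}$, whose proof requires a carefully constructed resolution (a weighted blow-up with negative discrepancy followed by the resolution of Lemma \ref{U_2lem}) and the analysis of the blow-down morphism in Lemma \ref{blowdownlem}; and (b) Vistoli's Lemma \ref{vistoli}, which shows that perturbing a smoothing direction by an element of $\mathfrak{m}^2 T^1$ still yields a smoothing. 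Without these, even granting all your cohomological claims, you could not conclude that the global deformation actually smooths the local singularities. Your remarks on $K_X+D\sim 0$, on the unobstructedness of $\Def_{(X,D)}$, and on the reduction of the non-normal case to a local problem are consistent with the paper, but the core of the argument is missing.
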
 

Conjecture \ref{simulqsmconj} (i) is solved in most of the cases as follows. 

\begin{thm}\label{qsmqfanothmintro}(\cite[Theorem 1.1]{Sanonote})
	A $\mathbb{Q}$-Fano $3$-fold can be deformed to one with only quotient singularities and 
	  $A_{1,2}/4$-singularities.  
	\end{thm}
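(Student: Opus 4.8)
The plan is to split the statement into a purely local part and a global part. The local part is Reid's study of deformations of $3$-fold terminal singularities \cite[(6.4)]{YPG}: each such (isolated) singularity admits a one-parameter deformation whose general fibre has only quotient singularities, the one exception being that an $A_{1,2}/4$-singularity may persist. Granting this, everything to be proved is global, namely that the prescribed local deformations at the finitely many points of $\Sing X$ can be realised \emph{simultaneously} by a single deformation of the projective threefold $X$, and that such a deformation actually exists over a disc (not merely to first order).

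First I would record the local-to-global $\Ext$ spectral sequence $E_2^{p,q}=H^p(X,\mathcal{T}^q_X)\Rightarrow \mathbb{T}^{p+q}_X$, where $\mathbb{T}^i_X=\Ext^i_{\mathcal{O}_X}(\Omega_X,\mathcal{O}_X)$ controls the deformations of $X$ and $\mathcal{T}^q_X=\mathcal{E}xt^q_{\mathcal{O}_X}(\Omega_X,\mathcal{O}_X)$. Since terminal $3$-fold singularities are isolated, $\mathcal{T}^1_X$ is a skyscraper supported on $\Sing X$ and $H^0(X,\mathcal{T}^1_X)=\bigoplus_{p\in\Sing X}T^1_{X,p}$ is exactly the space of local first-order deformations. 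The five-term exact sequence yields $\mathbb{T}^1_X\to H^0(X,\mathcal{T}^1_X)\to H^2(X,T_X)$, so the map from global to local first-order deformations is surjective as soon as $H^2(X,T_X)=0$, where $T_X=\mathcal{H}om(\Omega_X,\mathcal{O}_X)$ is the (reflexive) tangent sheaf. Proving this vanishing is the technical core: by Serre duality on the Cohen--Macaulay variety $X$ it is dual to a reflexive version of $H^1(X,\Omega^{[1]}_X\otimes\omega_X)$, which I would kill by an Akizuki--Nakano/Bogomolov--Sommese-type vanishing for reflexive differentials, using that $-K_X$ is ample and that terminal singularities are rational.

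Surjectivity is only a statement about first-order deformations, so next I would establish that $\Def_X$ is unobstructed. The natural tool is the $T^1$-lifting principle of Ran and Kawamata, which reduces formal smoothness of $\Def_X$ to the vanishing of the relevant obstruction groups at each Artinian step; for a $\mathbb{Q}$-Fano (hence with $H^1(X,\mathcal{O}_X)=H^2(X,\mathcal{O}_X)=0$ by Kawamata--Viehweg vanishing) this is available along the lines of Minagawa's and Namikawa's treatment of weak Fano and Calabi--Yau threefolds. With $\Def_X$ smooth, the first-order deformation whose local components are the chosen $\mathbb{Q}$-smoothing directions integrates to a genuine family $\mathcal{X}\to\Delta^1$. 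Finally, by openness of versality together with upper-semicontinuity of the local analytic type of the singularities, the general fibre $\mathcal{X}_t$ acquires precisely the mildest singularities allowed by each local deformation, i.e.\ only quotient singularities apart from the unavoidable $A_{1,2}/4$-points.

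The main obstacle I anticipate is the global vanishing $H^2(X,T_X)=0$ in the non-Gorenstein setting: one cannot simply invoke Kodaira--Akizuki--Nakano, and must instead work with the reflexive sheaves $\Omega^{[1]}_X$ and $T_X$ and carefully control the contribution of the terminal points, for example by passing to a resolution and using the rationality of the singularities. The intrusion of the $A_{1,2}/4$-singularity is forced entirely by the local deformation theory, as it is the single terminal singularity not admitting a $\mathbb{Q}$-smoothing; once global surjectivity onto local deformations and unobstructedness of $\Def_X$ are secured, the exact shape of the statement drops out.
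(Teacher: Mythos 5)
There is a genuine gap, and it is worth noting at the outset that the present paper does not prove this statement at all: it is imported from \cite{Sanonote}, and the proof there (like the proof of the pair version, Theorem \ref{maintheorem}, given in this paper) is a Namikawa--Steenbrink-type argument \cite{NamSt} via coboundary maps into local cohomology groups $H^2_{E_i}(\tilde{U}_i,\Omega^2_{\tilde{U}_i}(\log E_i))$ attached to carefully chosen partial resolutions --- not the direct local-to-global argument you propose. Your skeleton (local $\mathbb{Q}$-smoothings by \cite[(6.4)]{YPG}, unobstructedness of $\Def_X$, surjectivity of $T^1_X\to\bigoplus_p T^1_{X,p}$, openness of versality) is the right outline, and the unobstructedness step is indeed available. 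The gap is the surjectivity step. You reduce it to $H^2(X,\Theta_X)=0$, to be obtained by Serre duality plus an Akizuki--Nakano-type vanishing for reflexive differentials twisted by $\omega_X$. Such a vanishing is known for quotient singularities (Steenbrink), but here $X$ carries arbitrary terminal singularities, which are non-Gorenstein hyperquotients; no Akizuki--Nakano-type vanishing for reflexive differentials is available in that generality, and the introduction of this paper explicitly says, for the analogous pair statement, that the strategy of proving the corresponding $H^2$-vanishing ``does not work well''. The actual argument replaces surjectivity onto $\bigoplus_p T^1_{X,p}$ by something weaker but provable: surjectivity onto $\bigoplus_i H^2_{E_i}(\cdots)$ (from mixed Hodge theory on the affine complement of the ample divisor) together with the estimate $\Ker\phi_i\subset\mathfrak{m}^2T^1$ (the analogue of Lemma \ref{logkerm^2lem}), so the global deformation agrees with the local smoothing direction only up to an error of multiplicity $\ge 2$, which still smooths by a Bertini-type argument as in Lemma \ref{vistoli}. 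That mechanism, and the careful choice of resolution it requires, is the actual content of the theorem; it is absent from your proposal.

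A second, symptomatic error: you attribute the $A_{1,2}/4$ exception to local deformation theory, calling it ``the single terminal singularity not admitting a $\mathbb{Q}$-smoothing''. That is false. By \cite[(6.4)]{YPG} every $3$-fold terminal singularity admits a local $\mathbb{Q}$-smoothing; for $(x^2+y^2+z^3+u^2=0)/\mathbb{Z}_4(1,3,2,1)$ the equivariant deformation $x^2+y^2+z^3+tz+u^2=0$ already does the job, the general fibre having only cyclic quotient singularities. The exception in the theorem is a limitation of the \emph{global} argument: for this one singularity the coboundary-map mechanism fails to guarantee that the local smoothing direction is realised by a global deformation. This is why the present paper must still allow $A_{1,2}/4$-points on $X$ and handle them separately (Lemma \ref{A12-wtdblowup} and Section 3.3(II)). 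Note that if your surjectivity $T^1_X\twoheadrightarrow\bigoplus_p T^1_{X,p}$ together with unobstructedness were established, you would conclude a full $\mathbb{Q}$-smoothing with no exceptional case, i.e.\ Conjecture \ref{simulqsmconj}(i) in its entirety; the very shape of the statement you are asked to prove is telling you that the direct route does not close.
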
 
  Here an {\it $A_{1,2}/4$-singularity} means a singularity locally isomorphic to 
a ``hyper-quotient'' singularity 
 $(x^2+y^2+z^3+u^2=0)/\mathbb{Z}_4(1,3,2,1)$. (See Section \ref{weightedblupsection} for the notation.)

The main issue in this paper is Conjecture \ref{simulqsmconj} (ii). 
A typical example of a simultaneous $\mathbb{Q}$-smoothing can be given for a quasi-smooth $\mathbb{Q}$-Fano weighted hypersurface in Iano-Fletcher's list \cite{Iano-Fletcher}. 
If we take some special equation, it does not have a Du Val elephant. 
However, a general member of the family contains a Du Val K3 surface as its elephant. 
(We explain this phenomenon in Example \ref{egwhsnonlcelephant}.)  

The following is the main result of this paper. 

\begin{thm}\label{simulqsmisolthmintro}{\rm (= Theorem \ref{maintheorem})} 
	Let $X$ be a $\mathbb{Q}$-Fano $3$-fold. Assume that there exists $D \in |{-}K_X|$ with only isolated singularities. 
	
	Then there exists a simultaneous $\mathbb{Q}$-smoothing of $(X,D)$. 
	
	In particular, $X$ has a $\mathbb{Q}$-smoothing.  
	\end{thm}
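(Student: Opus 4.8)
The plan is to treat this as a local-to-global problem in the deformation theory of the pair $(X,D)$, regarding $D\in|{-}K_X|$ as the zero locus of an anticanonical section, so that a deformation of the pair means a $\mathbb{Q}$-Gorenstein deformation of $X$ together with a member of $|{-}K_{\mathcal{X}}|$ specializing to $D$. First I would set up the controlling cohomology: the first-order deformations of the pair form a group $T^1_{(X,D)}$ fitting into a local-to-global sequence
$$0 \to H^1(X,\mathcal{S}) \to T^1_{(X,D)} \to H^0(X,\mathcal{T}^1_{(X,D)}) \to H^2(X,\mathcal{S}),$$
where $\mathcal{S}$ is the appropriate (log) tangent sheaf of the pair — modelled on $T_X(-\log D)$ on the locus where $X$ and $D$ are nice — and $\mathcal{T}^1_{(X,D)}$ is the sheaf of local first-order deformations, supported on $\Sing X\cup\Sing D$. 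The isolated-singularity hypothesis on $D$ is exactly what makes this support a finite set, so that the problem decouples into finitely many local problems together with one globalization.

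Next I would carry out the local analysis at each $p\in\Sing X\cup\Sing D$. Since $D$ has only isolated singularities it is a normal surface with $K_D=(K_X+D)|_D\sim 0$ by adjunction, so near $p$ we face either a smooth point of $X$ carrying an isolated (Cartier) surface singularity of $D$, or a terminal point of $X$ with an isolated-singular anticanonical germ. Using the classification of $3$-fold terminal singularities and their local $\mathbb{Q}$-smoothings (\cite{YPG}, cf. Theorem \ref{qsmqfanothmintro}), I would show that each germ $(\widehat{X}_p,\widehat{D}_p)$ admits a local deformation making $X$ a quotient singularity and $D$ Du Val at the deformed points — the point being that a general anticanonical member on the $\mathbb{Q}$-smoothed germ is no worse than Du Val precisely because the original $D$ has an isolated singularity there. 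This singles out a distinguished $\mathbb{Q}$-smoothing direction in each stalk of $\mathcal{T}^1_{(X,D)}$.

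The main step, and the principal obstacle, is globalization: realizing all these local deformations at once in a single family. For this I would prove the vanishing $H^2(X,\mathcal{S})=0$, exploiting that $X$ is Fano so that $-K_X$ is ample and $\mathcal{S}$ is, up to a reflexive twist by $\omega_X^{-1}=\mathcal{O}_X(D)$, governed by $\Omega_X$-type sheaves to which a Kawamata--Viehweg / Bogomolov--Sommese vanishing applies; the smooth case $0\to T_X(-\log D)\to T_X\to\mathcal{O}_D(D)\to 0$ already gives $H^2(X,T_X(-\log D))=0$ from $H^2(T_X)=0$ and $H^1(\mathcal{O}_D(D))=0$ on the K3 elephant, and I would establish the singular analogue. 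This vanishing makes $T^1_{(X,D)}\to H^0(X,\mathcal{T}^1_{(X,D)})$ surjective and, more importantly, makes the forgetful map to the product of local deformation functors smooth, so the local $\mathbb{Q}$-smoothing directions found above lift to a global first-order deformation and then, by smoothness (unobstructedness relative to the local functors), to a formal deformation of the pair.

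Finally I would algebraize the formal pair deformation to an actual family $(\mathcal{X},\mathcal{D})\to\Delta^1$ — using that $H^2(\mathcal{O}_X)=0$ makes the formal family projective, so Grothendieck existence and Artin approximation apply — and verify by openness that the general fiber $\mathcal{X}_t$ has only quotient singularities while $\mathcal{D}_t\in|{-}K_{\mathcal{X}_t}|$ has only Du Val singularities, located at $\Sing\mathcal{X}_t$; the ``in particular'' clause then follows by forgetting $\mathcal{D}$. I expect the hardest part to be the globalization/vanishing step in the singular, non-Gorenstein setting: pinning down the correct controlling sheaf $\mathcal{S}$ for deformations of a singular pair with $D$ only $\mathbb{Q}$-Cartier, proving its $H^2$-vanishing, and simultaneously guaranteeing that the deformed elephant stays Du Val along $\Sing\mathcal{X}_t$ and becomes smooth elsewhere — which is exactly where the hypothesis that $D$ has only isolated singularities is indispensable.
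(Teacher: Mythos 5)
Your overall framework (local analysis at the finitely many singular points of $D$, then a global lifting step) matches the paper's, and your local step is essentially right: after reducing via Theorem \ref{qsmqfanothmintro} to quotient and $A_{1,2}/4$ points, local simultaneous $\mathbb{Q}$-smoothing directions $\eta_i \in T^1_{(U_i,D_i)}$ do exist because the local germs admit Du Val elephants. The problem is your globalization step. You propose to prove $H^2(X,\mathcal{S})=0$ for $\mathcal{S}$ the log tangent sheaf of the pair, so that the local directions lift on the nose. This is exactly the ``direct approach'' the paper singles out as not working: since $D\in|{-}K_X|$, one has $\Theta_X(-\log D)\simeq \Omega^2_X(\log D)\otimes\mathcal{O}_X(-K_X-D)\simeq\Omega^2_X(\log D)$, so the twist by the ample $-K_X$ that would feed an Akizuki--Nakano/Kawamata--Viehweg type vanishing is cancelled, and your smooth-case sequence $0\to T_X(-\log D)\to T_X\to \mathcal{O}_D(D)\to 0$ has no analogue giving the vanishing in the singular, non-Gorenstein setting. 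No substitute for this vanishing is offered, and the entire proof hinges on it.

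The paper's actual mechanism circumvents the surjectivity of $T^1_{(X,D)}\to\oplus_i T^1_{(U_i,D_i)}$ altogether. It maps both sides into local cohomology groups $H^2_{E_i}(\tilde U_i,\Omega^2_{\tilde U_i}(\log\tilde D_i+E_i))$ of a \emph{carefully chosen} resolution (a first weighted blow-up with $a(E_1,U,D)\le -1$, Lemmas \ref{nonGorwtlem} and \ref{A12-wtdblowup}, followed by the resolution of Lemma \ref{U_2lem}); the relevant global obstruction group is then $H^2(\tilde X,\Omega^2_{\tilde X}(\log\tilde D+E))$ with the \emph{untwisted} log $2$-forms, which vanishes by mixed Hodge theory on the smooth affine $X\setminus D$ --- this is where ampleness of $-K_X$ enters, not through a vanishing for the tangent sheaf. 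The price is that the coboundary maps $\phi_i$ are not injective, so the lifted global class $\eta$ only agrees with $\eta_i$ modulo $\Ker\phi_i$; the two key lemmas you are missing are $\Ker\phi_i\subset\mathfrak{m}^2 T^1_{(U_i,D_i)}$ (Lemma \ref{logkerm^2lem}, resting on the negative-discrepancy blow-ups and the blow-down morphism of Lemma \ref{blowdownlem}) and the perturbation Lemma \ref{vistoli}, which says that changing a smoothing direction by a function of multiplicity $\ge 2$ still gives a smoothing. Without these, the diagram chase that actually produces the simultaneous $\mathbb{Q}$-smoothing direction cannot be carried out.
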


The statement of Theorem \ref{simulqsmisolthmintro} is not empty since there is an example of a $\mathbb{Q}$-Fano $3$-fold with only terminal quotient singularities and
 with only non Du Val elephants (Example \ref{egwhsnonlcelephant}). 
Also note that we do not need the assumption on terminal singularities on $X$.  
 
 \vspace{5mm}
 
When $X$ has only non-normal elephants, the problem becomes more subtle. 
 There is an example of a klt  $\mathbb{Q}$-Fano $3$-fold with only isolated cyclic quotient singularities such that 
its small deformations have only non-normal elephants (See Example \ref{kltnonnormalexample}). 
However, we can at least reduce the problem to a local problem as follows. 

\begin{thm}\label{nonisolsimulqsmintro}
	Let $X$ be a $\mathbb{Q}$-Fano $3$-fold. Assume that $D \in |{-}K_X|$ has a reduced element $D$. 
	Let  $C:= \Sing D$ be the singular locus of $D$, $U_C \subset X$ an analytic open neighborhood of $C$ and $D_C:= D \cap U_C$. 
	Assume that the pair $(U_C, D_C)$ has a simultaneous $\mathbb{Q}$-smoothing. 
	
	Then the pair $(X,D)$ also has a simultaneous $\mathbb{Q}$-smoothing. 
	\end{thm}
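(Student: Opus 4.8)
The plan is to realize the given local family globally through the deformation theory of the pair $(X,D)$, using the ampleness of $-K_X$ and the log Calabi--Yau relation $K_X+D\sim 0$ to control the obstructions. I work with the functor $\Def(X,D)$ of deformations of the pair, whose infinitesimal theory is governed by the reflexive log tangent sheaf $\mathcal{T}:=T_X(-\log D)$, the sheaf of derivations of $\mathcal{O}_X$ preserving the ideal of $D$: first-order deformations of the pair sit in $H^1(X,\mathcal{T})$ and obstructions in $H^2(X,\mathcal{T})$, and there is a local-to-global spectral sequence $E_2^{p,q}=H^p(X,\mathcal{T}^q_{(X,D)})\Rightarrow T^{p+q}_{(X,D)}$ relating these to the local deformation sheaves $\mathcal{T}^{\ge 1}_{(X,D)}$, which are supported on $\Sing X\cup\Sing D$. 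The aim is to produce a one-parameter deformation $(\mathcal{X},\mathcal{D})\to\Delta^1$ of the pair whose restriction to $U_C$ is the prescribed simultaneous $\mathbb{Q}$-smoothing, and then to read off that the general fibre is a simultaneous $\mathbb{Q}$-smoothing of $(X,D)$.

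The engine is the normal sheaf sequence $0\to\mathcal{T}\to T_X\to N_{D/X}\to 0$ (valid off $\Sing X\cup C$ and globally after taking reflexive hulls, up to sheaves supported on $C$), together with the vanishing $H^2(X,T_X)=0$. This vanishing, which underlies the unobstructedness used in the $\mathbb{Q}$-smoothing of $X$ in Theorem \ref{qsmqfanothmintro}, follows from the reflexive identification $T_X\cong\Omega^{[2]}_X(-K_X)$ and Akizuki--Nakano type vanishing for the ample divisor $-K_X$. From the associated long exact sequence I get that the global obstruction $H^2(X,\mathcal{T})$ is a quotient of $H^1(D,N_{D/X})$, where $N_{D/X}\cong\mathcal{O}_D(-K_X)$ is ample because it is the restriction of an ample class; by adjunction $K_X+D\sim 0$ forces $D$ to be Calabi--Yau away from $C$, and for a Du Val elephant $H^1(D,N_{D/X})$ vanishes by K3--type vanishing. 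Thus the only source of obstruction, and the only reason the global deformation of the pair might fail to exist, is concentrated at the singular locus $C$ of $D$ (equivalently at the non-normal and worse-than-Du-Val points), which is exactly the locus covered by the hypothesis.

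With this localization in hand I would globalize and patch. Because $H^2(X,T_X)=0$, the restriction map $\Def(X)\to\prod\Def(X,p)$ to the local deformation functors at the singular points is smooth, so I can choose a global deformation of $X$ that agrees near $C$ with the $X$-part of the given simultaneous $\mathbb{Q}$-smoothing of $(U_C,D_C)$ (after enlarging $U_C$ to absorb the finitely many isolated Gorenstein points of $X$ off $D$, which are unconditionally smoothable). Since $-K_X$ is ample, Kodaira vanishing gives $H^1(X,\mathcal{O}_X(-K_X))=0$, hence $h^0(-K_{\mathcal{X}_t})$ is constant and the anticanonical divisor $D$ spreads out to a family $\mathcal{D}\subset\mathcal{X}$ with $\mathcal{D}_t\in|{-}K_{\mathcal{X}_t}|$. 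It then remains to check that the general $\mathcal{D}_t$ has only Du Val singularities: near $C$ this is arranged by matching the global family with the local simultaneous $\mathbb{Q}$-smoothing, and away from $C$ it holds because $D$ is already smooth there and smoothness is an open condition. A final standard algebraization step turns the resulting formal/analytic construction into an honest deformation over $\Delta^1$.

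The main obstacle I anticipate is precisely the compatibility in the patching step: ensuring that the globally extended anticanonical divisor $\mathcal{D}$ restricts near $C$ to the Du Val family coming from the local simultaneous $\mathbb{Q}$-smoothing, i.e.\ that the global deformation of $X$ together with the extension of the anticanonical section can be matched with the prescribed local data to high enough order along $C$. This is where the non-normality of $D$ makes the analysis delicate and where the local hypothesis is indispensable; controlling the relevant restriction maps of deformations and of sections of $|{-}K_X|$ along $C$, and verifying that the matched family is versal enough to inherit the local Du Val smoothing, is the technical heart of the argument.
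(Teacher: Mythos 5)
Your proposal identifies the right global inputs (ampleness of $-K_X$, unobstructedness of $\Def_{(X,D)}$, localization of the problem to $C$), but it is missing the idea that actually makes the proof work, and the step you defer as ``the technical heart'' is precisely where your proposed mechanism fails. The paper explicitly notes that the direct strategy of proving $H^2(X,\Theta_X(-\log D))=0$ ``does not work well,'' and your vanishing argument does not rescue it: the quotient of $\Theta_X(-\log D)\to T_X$ is only the \emph{equisingular} normal sheaf $N'_{D/X}\subset N_{D/X}$, and the discrepancy $N_{D/X}/N'_{D/X}$ is supported on the one-dimensional locus $C$, so its $H^1$ need not vanish; moreover your appeal to ``K3-type vanishing for a Du Val elephant'' assumes exactly what fails here ($D$ is non-Du Val, with non-isolated and possibly non-normal singularities). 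Consequently you have no surjectivity of the restriction $T^1_{(X,D)}\to T^1_{(U_C,D_C)}$, and the ``matching to high enough order along $C$'' that your patching step requires cannot be extracted from obstruction vanishing alone.

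What the paper does instead is accept that the local deformation $\eta_C$ cannot be lifted on the nose, and control the ambiguity. It builds a carefully chosen resolution $\mu\colon\tilde X\to X$ (Proposition \ref{essresolpropnonisol}) so that the relevant log discrepancy divisor is effective, composes the restriction with the coboundary map $\phi_C$ into the local cohomology $H^2_{E}(\tilde U_C,\Omega^2(\log\tilde D_C+E_C))$, and lifts $\phi_C(\eta_C)$ through $\psi$ using $H^2(\tilde X,\Omega^2_{\tilde X}(\log\tilde D+E))=0$, which comes from mixed Hodge theory on the affine variety $X\setminus D$ (this is where ampleness of $-K_X$ enters, not through Akizuki--Nakano). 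The key lemmas (\ref{kerimlemnonisol}, \ref{nonisolimagem^2lem}) show $\Ker\phi_p\subset\Image(\mu_p)_*\subset\mathfrak m_p^2T^1_{(U_p,D_p)}$, so the lifted class differs from $\eta_C$ near each $p\in C$ only by a deformation induced by a function of multiplicity $\ge 2$; Lemma \ref{vistoli} then guarantees the global deformation still smooths $D$ along $C$. Finally, the paper does not try to reach a Du Val elephant in one step: it first produces $\mathcal D_t$ with only \emph{isolated} singularities and then invokes the isolated-singularity theorem (Theorem \ref{maintheorem}) to finish. None of these ingredients --- the special resolution, the $\mathfrak m^2$ bound on the kernel of the coboundary map, Vistoli's lemma, and the two-step reduction to the isolated case --- appear in your proposal, so as written it has a genuine gap rather than a different complete route.
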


\subsection{Strategy of the proof} 
To prove Theorem \ref{simulqsmisolthmintro}, we use a coboundary map of a local cohomology group associated to a certain resolution of the pair $(X,D)$. 
Namikawa--Steenbrink also used some coboundary map to prove the  
smoothability of Calabi--Yau $3$-folds with terminal singularities (\cite[Section 1]{NamSt}). 
While they can use arbitrary log resolution of singularities in their definition of the coboundary map, 
we need to choose a special resolution carefully. 
We shall give a sketch of the proof of Theorem \ref{simulqsmisolthmintro} in the following.

Let $\Sing D := \{p_1, \ldots, p_l \}$, $U_i \subset X$ a Stein neighborhood of $p_i$ for $i=1, \ldots, l$ and $D_i:= D \cap U_i$. 
Let $T^1_{(X,D)}, T^1_{(U_i, D_i)}$ be the sets of first order deformations of the pair $(X,D)$ and $(U_i,D_i)$ respectively. 
Since deformations of the pair $(X,D)$ are unobstructed (\cite[Theorem 2.17]{Sano}), it is enough to find an element $\eta \in T^1_{(X,D)}$ 
which deforms singularities of $D_i$. 
By Theorem \ref{qsmqfanothmintro}, we can assume that $U_i$ is locally isomorphic to $\mathbb{C}^3/ \mathbb{Z}_r(1,a,r-a)$ or 
$(x^2+y^2+z^3+u^2=0)/\mathbb{Z}_4(1,3,2,1)$.  Since $U_i$ contains a Du Val elephant (cf.\ \cite{YPG}), 
there exists $\eta_i \in T^1_{(U_i, D_i)}$ which induces a simultaneous $\mathbb{Q}$-smoothing of $(U_i, D_i)$. 
We study the restriction homomorphism $\oplus p_{U_i} \colon T^1_{(X,D)} \rightarrow \oplus_{i=1}^l T^1_{(U_i,D_i)}$ and 
want to lift a local deformation $\eta_i \in T^1_{(U_i, D_i)}$. 
There exists an exact sequence 
\[
T^1_{(X,D)} \stackrel{\oplus p_{U_i}}{\rightarrow} \oplus_{i=1}^l T^1_{(U_i,D_i)} \rightarrow H^2(X, \Theta_X(- \log D)),   
\]
where $\Theta_X(- \log D)$ is the sheaf of tangent vectors which vanish along $D$. 
One direct approach is to try to prove $H^2(X, \Theta_{X}(- \log D)) =0$. 
However, this strategy does not work well. 
Thus we should study the map $\oplus p_{U_i}$ more precisely. 

For this purpose, we use some local cohomology groups supported on the exceptional divisor of a suitable resolution $\mu_i \colon \tilde{U_i} \rightarrow U_i$ 
of the pair $(U_i, D_i)$ for $i=1 \ldots, l$.  
We use the commutative diagram of the form 
\[
\xymatrix{
T^1_{(X,D)} \ar[r]^{\oplus p_{U_i}} \ar[rd]_{\oplus \psi_i}  
& \oplus_{i=1}^l T^1_{(U_i,D_i)} \ar[d]^{\oplus \phi_i} \\ 
 & \oplus_{i=1}^l H^2_{E_i}(\tilde{U}_i, \Omega^2_{\tilde{U}_i} (\log \tilde{D}_i +E_i)), 
}
\]
where $\tilde{D}_i \subset \tilde{U}_i$ is the strict transform of $D_i$, $E_i:= \mu_i^{-1}(p_i)$ and $\phi_i$ is 
the coboundary map. 

One of the key points of the proof is to show that the coboundary map $\phi_i$ does not vanish. 
Actually, in Lemma \ref{lem:logkerm^2} (ii), when $U_i$ has only quotient singularity, 
we show that 
\begin{equation}\label{m2relintro}
\Ker \phi_i \subset \mathfrak{m}^2  T^1_{(U_i,D_i)},
\end{equation} 
where $\mathfrak{m}^2  T^1_{(U_i,D_i)}$ is the set of deformations induced by functions of order $2$ or higher (See (\ref{m2definition}) for the definition). 
In order to show this, we should carefully choose a resolution $\mu_i \colon \tilde{U}_i \rightarrow U_i$. 
We first choose a suitable weighted blow-up $\mu_{i,1} \colon U_{i,1} \rightarrow U_i$ such that 
$K_{U_{i,1}}+ (\mu_{i,1}^{-1})_* (D_i) - \mu_{i,1}^* (K_{U_i} +D_i)$ 
has negative coefficient (Lemma \ref{nonGorwtlem} and Lemma \ref{A12-wtdblowup}).    
Next we construct a suitable resolution $\mu_{i,12} \colon U_{i,2} \rightarrow U_{i,1}$ of the pair 
$(U_{i,1}, \mu_{i,1}^{-1}(D_i))$ (Lemma \ref{U_2lem}). 
By these careful choices, we can prove the statement (\ref{m2relintro}) in Lemma \ref{lem:logkerm^2}. 
This subtlety of choosing a suitable resolution does not appear in the previous approach of finding 
a global smoothing as in \cite{NamSt}, for example. Thus this issue is a new feature of our method.  

We can also show that $\psi_i$ is surjective since $D$ is ample and $X \setminus D$ is affine. 
Here we need the Fano assumption. 

By these two statements, we can do diagram-chasing in the above diagram to find a 
simultaneous $\mathbb{Q}$-smoothing direction $\eta \in T^1_{(X,D)}$. This is a sketch of the proof. 

The framework of the proof of Theorem \ref{nonisolsimulqsmintro} is similar.  

\section{Preliminaries on deformations of a pair}

\subsection{Deformation of a morphism of algebraic schemes}

In this paper, we often use the following notion of a functor of deformations of a closed immersion of 
algebraic schemes. 

\begin{defn}\label{pairdefschdefn}(cf. \cite[3.4.1]{Sernesi})
Let $f \colon D \hookrightarrow X$ be a closed immersion of algebraic schemes over an algebraically closed field $k$ and $S$ an algebraic scheme over $k$ with 
a closed point $s \in S$. 
A {\it deformation} of a pair $(X,D)$ over $S$ is a data $(F, i_X, i_D)$ in the cartesian diagram 
\begin{equation}\label{deformmapdiagram}
\xymatrix{
D \ar@{^{(}->}[r]^{i_D} \ar[d]^{f} & \mathcal{D} \ar[d]^{F} \\
X \ar@{^{(}->}[r]^{i_X}  \ar[d] & \mathcal{X} \ar[d]^{\Psi} \\ 
\{s \} \ar@{^{(}->}[r] & S, 
}
\end{equation}
where $\Psi$ and $\Psi \circ F$ are flat and $i_D, i_X$ are closed immersions. 
Two deformations $(F, i_D, i_X)$ and $(F', i_D', i_X')$ of $(X,D)$ over $S$ are said to be 
{\it equivalent} if there exist isomorphisms $\alpha \colon \mathcal{X} \rightarrow \mathcal{X}'$ and 
$\beta \colon \mathcal{D} \rightarrow \mathcal{D}'$ over $S$ 
which commutes the following diagram; 
\[
\xymatrix{
D \ar@{^{(}->}[r]^{i_D} \ar@{^{(}->}[rd]^{i'_{D}} & \mathcal{D} \ar[r] \ar[d]^{\beta} & \mathcal{X} \ar[d]^{\alpha} & 
X \ar@{^{(}->}[l]^{i_X} \ar@{^{(}->}[ld]^{i_X'} \\
 & \mathcal{D}' \ar[r] & \mathcal{X}'. &   
}
\] 

Let $\Art_k$ be the category of Artin local $k$-algebras with residue field $k$. 
We define the functor $\Def_{(X,D)} \colon \Art_k \rightarrow (Sets)$ by setting 
\begin{equation}\label{pairfunctordef}
\Def_{(X,D)}(A):= \{ (F, i_D, i_X): \text{deformation of } (X,D) \text{ over } \Spec A \}/({\rm equiv}),    
\end{equation}
where $({\rm equiv})$ means the equivalence introduced in the above. 

Similarly, we can define the deformation functor $\Def_X \colon \Art_k \rightarrow (Sets)$ of an algebraic scheme $X$. Actually, we have $\Def_X= \Def_{(X,\emptyset)}$. 
\end{defn}

In this paper, we just treat the cases when $D$ is a divisor. 
Next, we introduce the notion of a deformation of a pair of a variety and several effective Cartier divisors. 

\begin{defn}\label{defn-pair}
Let $X$ be an algebraic variety and $D_j$ for $j \in J$ a finite number of effective Cartier divisors. Set $D := \sum_{j \in J} D_j$. 
We can define a functor $\Def_{(X,D)}^J \colon \Art_k \rightarrow (Sets)$ by setting $\Def_{(X,D)}^J(A)$ to be the equivalence classes of 
deformations of a closed immersion $i \colon D \hookrightarrow X$ induced by deformations of each divisor $D_j \hookrightarrow X$
 for $A \in \Art_k$. 

We skip the script $J$ when $D = \sum_{j \in J} D_j$ is the decomposition into irreducible components and there is no confusion. 
In this paper, we only treat deformations of a divisor coming from deformations of its irreducible components.   

Let $A_1:= k[t]/(t^2)$. 
In this setting, we write $T^1_{(X,D)}:= \Def_{(X,D)}(A_1)$ and $T^1_{X}:= \Def_X(A_1)$ for the sets of first order deformations of the pair $(X,D)$ and $X$, 
respectively.  
\end{defn}

\begin{rem}
Let $X$ be a smooth variety and $D= \sum D_j$ a SNC divisor on $X$.
Then we have the well-known isomorphism 
\[
T^1_{(X,D)} \simeq H^1(X, \Theta_X (- \log D)), 
\]
where $\Theta_{X}(- \log D)$ is the sheaf of tangent vectors on $X$ vanish along $D$ (cf. \cite[Proposition 3.4.17]{Sernesi}). 
\end{rem}

\begin{rem}
If $X$ is smooth and $D= \sum_{j \in J} D_j$ is a SNC divisor, $\Def_{(X,D)}^J(A)$ parametrizes locally trivial deformations and
 does not include an element which induces a smoothing of $D$. 
\end{rem}

\subsection{Preliminaries on weighted blow-up}\label{weightedblupsection}
We prepare several properties of the weighted blow-up. 
We refer  \cite[Section 2]{Hayakawa2} for more details. 

Let $v := \frac{1}{r} (a_1, \ldots, a_n) \in \frac{1}{r} \mathbb{Z}^n$, $N:= \mathbb{Z}^n + \mathbb{Z} v$ a lattice and 
$M:= \Hom (N, \mathbb{Z})$. 
Let $e_1:= (1,0, \ldots,0), \ldots, e_n:= (0,\ldots,0,1)$ be a basis of $N_{\mathbb{R}} := N \otimes \mathbb{R}$ and  
 $\sigma := \mathbb{R}_{\ge 0}^{n} \subset \mathbb{R}^n$  the cone generated by $e_1, \ldots, e_n$. 
 Let $U:= \Spec \mathbb{C}[\sigma^{\vee} \cap M]$ be the associated toric variety. 
 We know that $U \simeq \mathbb{C}^n/ \mathbb{Z}_r(a_1,\ldots ,a_n)$, where the R.H.S.\ is the quotient of $\mathbb{C}^n$ 
 by the $\mathbb{Z}_r$-action 
 \[
 (x_1, \ldots, x_n) \mapsto (\zeta_r^{a_1} x_1, \ldots, \zeta_r^{a_n} x_n), 
 \]
where $x_1, \ldots, x_n$ are the coordinates on $\mathbb{C}^n$ and $\zeta_r$ is a primitive $r$-th root of unity. 
In this case, we say that $0 \in \mathbb{C}^n/ \mathbb{Z}_r$ is a $1/r(a_1, \ldots, a_n)$-singularity. 

Let $v_1:= \frac{1}{r}(b_1, \ldots, b_n) \in N$ be a primitive vector such that $b_i >0$ for all $i$. 
Let $\Sigma_1$ be a fan which is formed by the cones $\sigma_i$ generated by $\{e_1, \ldots, e_{i-1}, v_1, e_{i+1}, \ldots, e_n \}$
for $i=1, \ldots, n$. 
Let $U_1$ be the toric variety associated to the fan $\Sigma_1$. 
Let $\mu_1 \colon U_1 \rightarrow U$ be the toric morphism associated to the subdivision. 
It is a projective birational morphism with an exceptional divisor $E_1:= \mu_1^{-1}(0)$. 
We call $\mu_1$ the {\it weighted blow-up } with weights $v_1$.  

Let $f:= \sum f_{i_1, \ldots, i_n} x_1^{i_1} \cdots x_n^{i_n} \in \mathbb{C}[x_1, \ldots,x_n]$ be the 
$\mathbb{Z}_r$-semi-invariant polynomial with respect to the $\mathbb{Z}_r$-action on $\mathbb{C}^n$. 
Let 
\[
\wt_{v_1}(f):= \min \{ \sum_{j=1}^n \frac{b_j i_j}{r} \mid f_{i_1, \ldots, i_n} \neq 0 \}
\] 
be {\it the $v_1$-weight } of $f$. 
Let $D_f:= (f=0)/ \mathbb{Z}_r \subset U$ be the divisor determined by $f$ and 
$D_{f,1} \subset U_1$ the strict transform of $D_f$. 
Then we have the following; 
\begin{equation}\label{discrepancywtblup}
	K_{U_1} = \mu_1^* K_U + \frac{1}{r}(\sum_{i=1}^n b_i -r) E_1, 
	\end{equation} 
\begin{equation}\label{stricttrfwtblup}
	D_{f,1} = \mu_1^* D_f - \wt_{v_1}(f) E_1.  
	\end{equation} 

 Let $U_{1,i} \subset U_1$ be the affine open subset which corresponds to the cone $\sigma_i$. 
 Then we have $U_1 = \bigcup_{i=1}^n U_{1,i}$ and 
 \[
 U_{1,i} \simeq \mathbb{C}^n/ \mathbb{Z}_{a_i}(-a_1, \ldots, \overset{i{\rm -th}}{r}, \ldots, - a_n). 
 \]
Moreover the morphism  $ \mu_1|_{U_{1,i}} \colon U_{1,i} \rightarrow U$ is described by 
\[
(x_1, \ldots, x_n) \mapsto (x_1 x_i^{a_1/r}, \ldots, x_i^{a_i/r}, \ldots, x_n x_i^{a_n/r}). 
\]

\subsection{Deformations of a divisor in a terminal $3$-fold}
We first define discrepancies of a log pair. 

\begin{defn}
	Let $U$ be a normal variety and $D$ its divisor such that $K_U +D $ is $\mathbb{Q}$-Cartier, that is, 
	$m(K_U+D)$ is a Cartier divisor for some positive integer $m$. 
	Let $\mu \colon \tilde{U} \rightarrow U$ be a proper birational morphism from another normal variety 
	and $E_1, \ldots, E_l$ its exceptional divisors. 
	Let $\tilde{D} \subset \tilde{U}$ be the strict transform of $D$. 
	
	We define a rational number $a(E_i, U, D)$ as the number such that   
	\[
	m(K_{\tilde{U}} +\tilde{D}) = \mu^*(m(K_U +D)) + \sum_{i=1}^l m a(E_i, U, D) E_i. 
	\]
	We call $a(E_i, U, D)$ the {\it discrepancy} of $E_i$ with respect to the pair $(U,D)$. 
	\end{defn}

Let $U$ be a Stein neighborhood of a $3$-fold terminal singularity of Gorenstein index $r$ and 
$D$ a $\mathbb{Q}$-Cartier divisor on $U$. 
We have the index one cover $\pi_U \colon V:= \Spec \oplus_{j=0}^{r-1} \mathcal{O}_U(jK_U) \rightarrow U$ 
determined by an isomorphism $\mathcal{O}_U(rK_U) \simeq \mathcal{O}_U$. 
Let $G:= \Gal (V/U)\simeq \mathbb{Z}_r$ be the Galois group of $\pi_U$. 
 This induces a $G$-action 
on the pair $(V, \Delta)$, where $\Delta:= \pi_U^{-1}(D)$.  
We can define  functors of $G$-equivariant deformations of $(V, \Delta)$ as follows. 

\begin{defn}
	Let $\Art_{\mathbb{C}}$ be the category of local Artinian $\mathbb{C}$-algebras with residue field $\mathbb{C}$. 
Let $\Def^G_{(V,\Delta)} \colon \Art_{\mathbb{C}} \rightarrow (Sets)$ be a functor such that, for $A \in \Art_{\mathbb{C}}$, a set   
$\Def_{(V,\Delta)}^G(A) \subset \Def_{(V, \Delta)}(A)$ is the set of deformations $(\mathcal{V}, \mathbf{\Delta})$ of $(V, \Delta)$ over $A$ 
with a $G$-action which is compatible with the $G$-action on $(V, \Delta)$.   

We can also define the functor $\Def^G_{V} \colon \Art_{\mathbb{C}} \rightarrow (Sets)$ of $G$-equivariant deformations of $V$ similarly. 
\end{defn}

\begin{prop}\label{Gequivdeffuncprop}(\cite[Proposition 2.15]{Sano},\cite[Proposition 3.1]{Namtop})
We have isomorphisms of functors 
\begin{equation}\label{Gequivfunctisom}
 \Def^G_{(V,\Delta)} \simeq \Def_{(U,D)}, \ \ \ \  \Def^G_{V} \simeq \Def_U. 
\end{equation} 
Moreover, these functors are unobstructed and the forgetful homomorphism 
$\Def_{(U,D)} \rightarrow \Def_U$ is a smooth morphism of functors.  
\end{prop}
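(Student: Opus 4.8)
The plan is to realise both isomorphisms as the quotient–cover correspondence, and then to read off unobstructedness and smoothness from the fact that the index-one cover $V$ is a Gorenstein (hence hypersurface) singularity on which $\Delta$ is Cartier. First I would construct the natural transformation $\Def^G_{(V,\Delta)} \to \Def_{(U,D)}$ by passing to quotients. Given $(\mathcal{V},\boldsymbol{\Delta}) \in \Def^G_{(V,\Delta)}(A)$ for $A \in (Art_{\mathbb{C}})$, I set $\mathcal{U} := \mathcal{V}/G$ and $\mathcal{D} := \boldsymbol{\Delta}/G$, formed as the $\mathbf{Spec}$ of the $G$-invariant subalgebras (everything is affine since we work locally around the singular point). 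Because $r = |G|$ is invertible in $\mathbb{C}$, the $A$-algebra $\mathcal{O}_{\mathcal{V}}$ splits into its $G$-isotypic components via the Reynolds projectors, each of which is an $A$-module direct summand and hence $A$-flat; moreover the projectors are defined over $\mathbb{C} \subseteq A$, so formation of invariants commutes with the base change $A \to \mathbb{C}$. Thus $\mathcal{U} \to \Spec A$ is flat with central fibre $V/G = U$, and likewise for $\mathcal{D}$, giving $(\mathcal{U},\mathcal{D}) \in \Def_{(U,D)}(A)$.

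For the inverse I would reconstruct the relative index-one cover. Given $(\mathcal{U},\mathcal{D}) \in \Def_{(U,D)}(A)$, I form the relative dualizing sheaf $\omega_{\mathcal{U}/A}$ together with its reflexive powers $\omega_{\mathcal{U}/A}^{[j]}$, and set $\mathcal{V} := \mathbf{Spec}_{\mathcal{U}} \bigoplus_{j=0}^{r-1} \omega_{\mathcal{U}/A}^{[j]}$ with $\boldsymbol{\Delta}$ the preimage of $\mathcal{D}$; the natural $\mathbb{Z}_r$-grading supplies the $G$-action. The crucial point to verify is that the cyclic algebra structure on the central fibre, i.e.\ the trivialization $\mathcal{O}_U(rK_U) \simeq \mathcal{O}_U$, deforms uniquely over $A$, so that the construction is canonical. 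I would then check that the two transformations are mutually inverse, which is immediate from $V = \mathbf{Spec}\bigoplus_j \mathcal{O}_U(jK_U)$ and the identity $\bigl(\bigoplus_j \mathcal{O}_U(jK_U)\bigr)^G = \mathcal{O}_U$. Running the same argument with $\Delta$ suppressed gives $\Def^G_V \simeq \Def_U$. \textbf{This cover direction is the step I expect to be the main obstacle}: making precise that over a non-reduced Artinian base the reflexive powers $\omega_{\mathcal{U}/A}^{[j]}$ commute with base change and that the trivialization of $\omega^{[r]}$ extends without ambiguity. This is exactly where the terminal hypothesis (normal, Cohen–Macaulay fibres with an isolated singularity, so the bad locus has codimension three) and characteristic zero are essential; the rest is formal.

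Next I would treat unobstructedness. Since $U$ carries a $3$-fold terminal singularity of index $r$, its index-one cover $V$ is Gorenstein terminal, hence a compound Du Val singularity, i.e.\ an isolated hypersurface singularity and in particular a local complete intersection. Therefore $T^2_V = 0$ and $\Def_V$ is unobstructed. As $\mathbb{C}[G]$ is semisimple, the equivariant functor $\Def^G_V$ has tangent space $(T^1_V)^G$ and obstruction space inside $(T^2_V)^G \subseteq T^2_V = 0$, so $\Def^G_V \simeq \Def_U$ is again unobstructed.

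Finally I would address the forgetful morphism $\Def_{(U,D)} \to \Def_U$, which under the isomorphisms is $\Def^G_{(V,\Delta)} \to \Def^G_V$. Here $\Delta = \pi_U^{-1}(D)$ is a Cartier divisor on the Gorenstein scheme $V$ (for the anticanonical divisors of interest, $\Delta \sim -K_V$), so I would apply the infinitesimal lifting criterion: given a small extension $A' \twoheadrightarrow A$ and a $G$-equivariant deformation of $V$ over $A'$ restricting to one of $(V,\Delta)$ over $A$, the divisor $\boldsymbol{\Delta}$ is cut out locally by a single equation which lifts freely because $V$ is affine (the obstruction lives in $H^1(V,\mathcal{O}_V) = 0$), and the lift may be chosen $G$-equivariantly by semisimplicity of $\mathbb{C}[G]$. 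This proves smoothness of the forgetful morphism; combined with the unobstructedness of $\Def_V$ (equivalently $\Def_U$) it also yields unobstructedness of $\Def_{(V,\Delta)}$ and hence of $\Def_{(U,D)}$, completing the proof once the canonical cover construction of the second paragraph is established.
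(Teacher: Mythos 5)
Your overall architecture coincides with that of the cited proofs (Namikawa, Prop.\ 3.1; Sano, Prop.\ 2.15): quotient by $G$ in one direction, recover the cover in the other, get unobstructedness from $T^2_V=0$ for the (hypersurface, cDV) index-one cover together with semisimplicity of $\mathbb{C}[G]$, and get smoothness of the forgetful map by lifting the semi-invariant local equation of the Cartier divisor $\Delta\sim -K_V$ over the affine $V$ and projecting onto the correct isotypic component. Those three parts of your argument are correct as written (with the small caveat that the equation of $\Delta$ is a $G$-\emph{semi}-invariant, so ``averaging'' must mean the Reynolds projector onto the relevant character eigenspace, which you essentially say).

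The one genuine gap is exactly the one you flag yourself: the inverse transformation. Building $\mathcal{V}=\mathbf{Spec}_{\mathcal{U}}\bigoplus_{j=0}^{r-1}\omega_{\mathcal{U}/A}^{[j]}$ requires knowing that reflexive powers of the relative dualizing sheaf are $A$-flat, commute with base change, and carry a canonically deformed algebra structure over a non-reduced Artinian base; none of this is automatic, and you do not supply it. The references avoid the issue entirely by a different mechanism: restrict a deformation $\mathcal{U}$ of $U$ to the punctured neighborhood $U'=U\setminus\{0\}$, over which $\pi_U\colon V'\to U'$ is \'etale, so the deformation lifts \emph{uniquely} to a $G$-equivariant deformation $\mathcal{V}'$ of $V'$; then set $\mathcal{V}:=\Spec H^0(\mathcal{O}_{\mathcal{V}'})$. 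Because $V$ is Cohen--Macaulay of dimension $3$ with an isolated singular point, $\depth_0\mathcal{O}_V=3$, so $H^1(V',\mathcal{O}_{V'})=H^2_0(V,\mathcal{O}_V)=0$; by the same mechanism as Proposition \ref{openpushforwardexact} and Proposition \ref{bldownlem} this pushforward is $A$-flat, commutes with base change, and restricts to $\mathcal{O}_V$ over the closed point. Uniqueness of \'etale lifting makes the two transformations mutually inverse without ever touching $\omega_{\mathcal{U}/A}^{[j]}$. The divisor is carried along by the same codimension-$\geq 2$ pushforward of its ideal sheaf. I recommend replacing your second paragraph with this argument; with that substitution the proof is complete and agrees with the one the paper cites.
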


This proposition implies the following. 

\begin{prop}\label{T^1invprop}
	Let $U,D, \pi_U \colon V \rightarrow U, \Delta$ as above. 
	Then we have 
	\[
	T^1_{(U,D)} \simeq (T^1_{(V,\Delta)})^{G}, \ \ \ \  T^1_U \simeq (T^1_V)^{G}. 
	\]
	\end{prop}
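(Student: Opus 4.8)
The plan is to deduce Proposition \ref{T^1invprop} directly from Proposition \ref{Gequivdeffuncprop} by specializing the isomorphisms of functors to the Artinian ring $A_1 = \mathbb{C}[t]/(t^2)$. Recall that by definition $T^1_{(U,D)} = \Def_{(U,D)}(A_1)$ and $T^1_V = \Def_V(A_1)$, and similarly for the other tangent spaces. So the first step is simply to evaluate the functor isomorphism $\Def^G_{(V,\Delta)} \simeq \Def_{(U,D)}$ at $A_1$, which yields a bijection
\[
\Def^G_{(V,\Delta)}(A_1) \simeq \Def_{(U,D)}(A_1) = T^1_{(U,D)},
\]
and likewise $\Def^G_V(A_1) \simeq T^1_U$.

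The second step is to identify the left-hand side $\Def^G_{(V,\Delta)}(A_1)$ with the $G$-invariant part $(T^1_{(V,\Delta)})^G$. By definition, $\Def^G_{(V,\Delta)}(A_1)$ is the subset of $\Def_{(V,\Delta)}(A_1) = T^1_{(V,\Delta)}$ consisting of those first-order deformations carrying a $G$-action compatible with the given $G$-action on $(V,\Delta)$. The natural way to see this equals the fixed locus is to use the linear $G$-action on the vector space $T^1_{(V,\Delta)}$ induced by functoriality: the group $G$ acts on the central fiber $(V,\Delta)$, hence acts on the deformation functor and in particular linearly on its tangent space $T^1_{(V,\Delta)}$. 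A first-order deformation admits a compatible $G$-action precisely when it is fixed by this induced action, i.e.\ lies in $(T^1_{(V,\Delta)})^G$. This is where I would spend the most care: one must check that ``admitting a compatible $G$-equivariant structure'' coincides with ``being a $G$-fixed point of the tangent space,'' and that no nontrivial automorphisms obstruct the lifting of the group action over $A_1$. Over the dual numbers this is clean because $H^0$-type obstructions to equivariance vanish for a first-order (square-zero) deformation, so a $G$-fixed class automatically carries a unique equivariant structure; I would cite or invoke the standard equivalence between equivariant deformations and fixed points of the induced representation on $T^1$.

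Combining these two steps gives $T^1_{(U,D)} \simeq (T^1_{(V,\Delta)})^G$, and the identical argument applied to the second functor isomorphism $\Def^G_V \simeq \Def_U$ gives $T^1_U \simeq (T^1_V)^G$. The main obstacle, as noted, is the identification $\Def^G_{(V,\Delta)}(A_1) = (T^1_{(V,\Delta)})^G$ rather than the functor-level input, which is already supplied by Proposition \ref{Gequivdeffuncprop}; everything else is a formal evaluation of functors at $A_1$ together with the definitions of the tangent spaces in Definition \ref{defn-pair}. Since $G \simeq \mathbb{Z}_r$ is finite and we work over $\mathbb{C}$, taking $G$-invariants is exact, so there is no subtlety in interpreting $(T^1_{(V,\Delta)})^G$ as a genuine direct summand, which makes the resulting isomorphisms canonical.
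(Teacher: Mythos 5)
Your proposal is correct and follows the same route as the paper, which simply deduces the statement from Proposition \ref{Gequivdeffuncprop} (the paper offers no further argument beyond ``This proposition implies the following''). Your extra care in identifying $\Def^G_{(V,\Delta)}(A_1)$ with the fixed subspace $(T^1_{(V,\Delta)})^G$ — using that $G$ is finite and we are in characteristic zero, so a $G$-fixed first-order class carries a compatible equivariant structure — fills in exactly the step the paper leaves implicit.
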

 
 We check these isomorphisms in the following examples.

\begin{eg}
	Let $U:= \mathbb{C}^3/ \mathbb{Z}_2(1,1,1)$ and $D:= (x^3 +y^3 +z^3=0)/ \mathbb{Z}_2 \subset U$ its divisor. 
	In this case, we can write $V= \mathbb{C}^3$ and $\Delta = (x^3 +y^3 +z^3=0) \subset V$. 
	For 
	$f \in \mathcal{O}_{\mathbb{C}^3,0}$ such that $g \cdot f = -f$, 
	let $\eta_f \in T^1_{(U,D)}$ be the deformation $(\mathcal{U}, \mathcal{D}_f)$ of 
	$(U,D)$ over $A_1= \mathbb{C}[t]/(t^2)$ defined as follows; 
	\[
	\mathcal{U}:= U \times \Spec A_1,
	\] 
	\[
	\mathcal{D}_f:= (x^3+y^3+z^3+t f=0)/\mathbb{Z}_2 \subset \mathcal{U}.
	\]
	Then we have 
	\begin{equation*}
	T^1_{(U,D)} \simeq (T^1_{(V,\Delta)})^{\mathbb{Z}_2} \simeq (\mathcal{O}_{\mathbb{C}^3,0}/(x^2, y^2, z^2))^{[-1]} 
	\simeq \mathbb{C}\eta_x \oplus \mathbb{C} \eta_y \oplus \mathbb{C}\eta_z \simeq \mathbb{C}^3,   
	\end{equation*}
	where $(\mathcal{O}_{\mathbb{C}^3,0}/(x^2, y^2, z^2))^{[-1]}:= \{ f \in \mathcal{O}_{\mathbb{C}^3,0}/(x^2, y^2, z^2) \mid g \cdot f = -f \}$. 
	\end{eg}

We often use push-forward of an exact sequence by an open immersion. 

\begin{prop}\label{openpushforwardexact}
	Let $X$ be an algebraic scheme and $Z \subset X$ a closed subset. 
	Let $\iota \colon X \setminus Z \hookrightarrow X$ be the open immersion and 
	\[
	0 \rightarrow \mathcal{F} \rightarrow \mathcal{G} \rightarrow \mathcal{H} \rightarrow 0
	\]
	an exact sequence of coherent sheaves on $U:= X \setminus Z$. 
Assume that $\depth_p \iota_* \mathcal{F} \ge 3$ for all scheme-theoretic points $p \in Z$.  

Then we obtain an exact sequence 
\[
0 \rightarrow \iota_* \mathcal{F} \rightarrow \iota_* \mathcal{G} \rightarrow \iota_* \mathcal{H} \rightarrow 0. 
\]	
	\end{prop}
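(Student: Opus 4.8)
The functor $\iota_*$ is left exact, so applying it to the given short exact sequence immediately yields exactness of
\[
0 \rightarrow \iota_* \mathcal{F} \rightarrow \iota_* \mathcal{G} \rightarrow \iota_* \mathcal{H}
\]
at the first two terms. The only point to prove is that $\iota_* \mathcal{G} \rightarrow \iota_* \mathcal{H}$ is surjective. The plan is to interpret the obstruction to surjectivity through the derived functors of $\iota_*$: from the long exact sequence
\[
0 \rightarrow \iota_* \mathcal{F} \rightarrow \iota_* \mathcal{G} \rightarrow \iota_* \mathcal{H} \rightarrow R^1 \iota_* \mathcal{F} \rightarrow \cdots
\]
the cokernel $\mathcal{Q}$ of $\iota_* \mathcal{G} \rightarrow \iota_* \mathcal{H}$ embeds into $R^1 \iota_* \mathcal{F}$. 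Hence it suffices to prove $R^1 \iota_* \mathcal{F} = 0$. Note that $\mathcal{Q}$ is automatically supported on $Z$, since restricting along $\iota$ recovers the original surjection $\mathcal{G} \to \mathcal{H}$ on $U$.

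To compute $R^1 \iota_* \mathcal{F}$, I would pass to the sheaf $\bar{\mathcal{F}} := \iota_* \mathcal{F}$ on $X$. Since $\iota$ is an open immersion we have $\iota^* \bar{\mathcal{F}} = \mathcal{F}$, so $R^q \iota_* \mathcal{F} = R^q \iota_*(\iota^* \bar{\mathcal{F}})$. Next I would use the standard exact triangle $R\Gamma_Z(\bar{\mathcal{F}}) \to \bar{\mathcal{F}} \to R\iota_* \iota^* \bar{\mathcal{F}}$ comparing a sheaf on $X$ with its restriction to $U$; taking cohomology sheaves gives the identifications $R^q \iota_*(\iota^* \bar{\mathcal{F}}) \cong \mathcal{H}^{q+1}_Z(\bar{\mathcal{F}})$ for $q \geq 1$, where $\mathcal{H}^{\bullet}_Z$ denotes local cohomology with supports in $Z$. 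In particular $R^1 \iota_* \mathcal{F} \cong \mathcal{H}^2_Z(\iota_* \mathcal{F})$, and the problem is reduced to the vanishing of this one local cohomology sheaf.

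Finally, I would invoke Grothendieck's characterization of depth in terms of local cohomology: for a coherent sheaf on a locally Noetherian scheme, the local cohomology sheaves $\mathcal{H}^i_Z$ vanish for all $i < n$ precisely when the stalk has depth at least $n$ over the local ring $\mathcal{O}_{X,p}$ at every scheme-theoretic point $p \in Z$. The hypothesis $\depth_p \iota_* \mathcal{F} \geq 3$ for all such $p$ therefore yields $\mathcal{H}^i_Z(\iota_* \mathcal{F}) = 0$ for $i \leq 2$; in particular $\mathcal{H}^2_Z(\iota_* \mathcal{F}) = 0$, hence $R^1 \iota_* \mathcal{F} = 0$ and the sequence of direct images is exact. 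The main subtlety is exactly this last step: one must apply the depth--local-cohomology equivalence over \emph{all} scheme-theoretic points of $Z$, not only the closed ones, with depth measured against the maximal ideal of each $\mathcal{O}_{X,p}$, which is precisely the form in which the hypothesis is stated. A minor point to keep in mind is coherence of $\iota_* \mathcal{F}$, so that the depth is well defined; in the intended applications the sheaves involved are coherent, so this causes no difficulty.
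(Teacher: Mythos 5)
Your proof is correct and follows exactly the route the paper takes: the paper's own proof is the one-line assertion that the depth hypothesis forces $R^1\iota_*\mathcal{F}=0$, whence surjectivity. You have simply supplied the standard justification (the local cohomology triangle identifying $R^1\iota_*\mathcal{F}$ with $\mathcal{H}^2_Z(\iota_*\mathcal{F})$ and Grothendieck's depth criterion), which is exactly what the paper leaves implicit.
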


\begin{proof}
	We have $R^1 \iota_* \mathcal{F} =0$ by the condition on the depth of $\iota_* \mathcal{F}$. 
	This implies the required surjectivity.
	\end{proof}

Proposition \ref{openpushforwardexact} immediately implies the following lemma on a restriction homomorphism of extension groups. 

\begin{lem}\label{codim3Omega1**}
	Let $X, Z, U$ be as in Proposition \ref{openpushforwardexact}. Let $\mathcal{F}$ be a reflexive coherent sheaf on $X$.
	Assume that $\depth_p \mathcal{O}_{X,p} \ge 3$ for all scheme theoretic points $p \in Z$. 
	
	 Then we have 
	\[
	\Ext^1(\mathcal{F}, \mathcal{O}_X ) \simeq \Ext^1(\mathcal{F}|_U, \mathcal{O}_U). 
	\]
	\end{lem}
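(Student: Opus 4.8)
The plan is to realize the two Ext-groups as the hypercohomology of one and the same complex and to compare them through the excision triangle for the pair $(X,U)$, so that the whole statement reduces to a single local-cohomology vanishing controlled by the depth hypothesis.

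Concretely, I would write $\mathcal{K}^\bullet := R\mathcal{H}om_{\mathcal{O}_X}(\mathcal{F}, \mathcal{O}_X)$, so that $\Ext^1(\mathcal{F},\mathcal{O}_X) = \mathbb{H}^1(X, \mathcal{K}^\bullet)$. Since $R\mathcal{H}om$ commutes with restriction to the open set $U$, we also have $\Ext^1(\mathcal{F}|_U, \mathcal{O}_U) = \mathbb{H}^1(U, \mathcal{K}^\bullet|_U)$. The excision (local-cohomology) triangle $R\Gamma_Z \to R\Gamma(X,-) \to R\Gamma(U, -|_U)$ applied to $\mathcal{K}^\bullet$ yields a long exact sequence
\[
\cdots \to \mathbb{H}^i_Z(X, \mathcal{K}^\bullet) \to \Ext^i(\mathcal{F}, \mathcal{O}_X) \to \Ext^i(\mathcal{F}|_U, \mathcal{O}_U) \to \mathbb{H}^{i+1}_Z(X, \mathcal{K}^\bullet) \to \cdots
\]
Hence it suffices to prove $\mathbb{H}^i_Z(X, \mathcal{K}^\bullet) = 0$ for $i = 1, 2$; this already forces the middle map to be an isomorphism in degree $1$.

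The key step is to commute the support functor past $R\mathcal{H}om$. I would model $R\Gamma_Z$ by the stable Koszul (\v{C}ech) complex on local generators of the ideal of $Z$: this is a bounded complex of flat $\mathcal{O}_X$-modules, so the projection formula gives a natural isomorphism
\[
R\Gamma_Z\, R\mathcal{H}om_{\mathcal{O}_X}(\mathcal{F}, \mathcal{O}_X) \simeq R\mathcal{H}om_{\mathcal{O}_X}(\mathcal{F}, R\Gamma_Z \mathcal{O}_X),
\]
valid because $\mathcal{F}$ is coherent, hence pseudo-coherent. Now the depth hypothesis enters: by Proposition \ref{openpushforwardexact} applied to $\mathcal{O}_X$ (equivalently, because $\depth_p \mathcal{O}_{X,p} \ge 3$ for $p \in Z$ forces the local cohomology sheaves $\mathcal{H}^j_Z(\mathcal{O}_X)$ to vanish for $j \le 2$), the complex $R\Gamma_Z \mathcal{O}_X$ has cohomology only in degrees $\ge 3$. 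Since $\mathcal{F}$ sits in degree $0$, the functor $R\mathcal{H}om_{\mathcal{O}_X}(\mathcal{F}, -)$ has non-negative cohomological amplitude, so $R\mathcal{H}om_{\mathcal{O}_X}(\mathcal{F}, R\Gamma_Z \mathcal{O}_X)$ is again concentrated in degrees $\ge 3$. Taking global hypercohomology (likewise cohomologically non-negative) gives $\mathbb{H}^i_Z(X, \mathcal{K}^\bullet) = 0$ for all $i \le 2$, which is exactly what was needed.

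The main obstacle is the justification of the displayed isomorphism, that is, that $R\Gamma_Z$ may be pulled inside $R\mathcal{H}om(\mathcal{F}, -)$: this is where coherence of $\mathcal{F}$ is used, via a bounded-above finite-free resolution together with the fact that tensoring with the flat \v{C}ech complex commutes termwise with $\Hom$ out of finite free modules. Once this is in place the argument is formal; in particular it does not really require $\mathcal{F}$ to be reflexive, reflexivity being only what makes the statement useful in the applications, where $\mathcal{F}$ will be a sheaf of differential forms. I would also note that the same computation shows the comparison map is injective in degree $2$, should that ever be needed.
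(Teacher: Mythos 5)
Your argument is correct, but it takes a genuinely different route from the paper, which gives no written proof at all: the lemma is stated as an immediate consequence of Proposition \ref{openpushforwardexact}. The intended argument there is the elementary one at the level of extension classes: a short exact sequence $0 \to \mathcal{O}_U \to \mathcal{E} \to \mathcal{F}|_U \to 0$ on $U$ pushes forward to a short exact sequence on $X$ because $R^1\iota_*\mathcal{O}_U=0$ under the depth hypothesis, and $\iota_*(\mathcal{F}|_U)\simeq \mathcal{F}$ because $\mathcal{F}$ is reflexive and $Z$ has codimension $\ge 3$; one then checks that $\iota_*$ and $\iota^*$ are mutually inverse on extension classes. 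Your derived-category version replaces this by the excision triangle for cohomology with supports together with the swap $R\Gamma_Z\,R\mathcal{H}om(\mathcal{F},\mathcal{O}_X)\simeq R\mathcal{H}om(\mathcal{F},R\Gamma_Z\mathcal{O}_X)$, which you justify correctly (finite free resolution of the coherent sheaf $\mathcal{F}$ against the bounded flat stable Koszul complex). What your approach buys: reflexivity becomes irrelevant (coherence suffices, since you never need $\iota_*(\mathcal{F}|_U)\simeq\mathcal{F}$), you get injectivity in degree $2$ for free, and the argument isolates the true input, namely $\mathcal{H}^j_Z(\mathcal{O}_X)=0$ for $j\le 2$ by Grothendieck's depth criterion --- note that this criterion, rather than Proposition \ref{openpushforwardexact} itself, is what you are really invoking, since the proposition only records the $j=2$ instance through $R^1\iota_*$. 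What the paper's route buys is brevity and the avoidance of derived-category formalism. One presentational point you should tighten: the Koszul model and the tensor-hom swap live at the level of the sheafy local cohomology $R\underline{\Gamma}_Z$, whereas your triangle uses the global $R\Gamma_Z(X,-)$; the vanishing of the former in degrees $\le 2$ yields the vanishing of the latter via $R\Gamma_Z(X,-)=R\Gamma(X,R\underline{\Gamma}_Z(-))$, and this intermediate step deserves a sentence.
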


We repeatedly use the following lemma which is also a consequence of Proposition \ref{openpushforwardexact}. 

\begin{lem}\label{codim3pair}(\cite[Lemma 4.3]{Sano}) 
Let $X$ be a $3$-fold with only terminal singularities and $D$ a $\mathbb{Q}$-Cartier divisor on $X$. 
Let $Z \subset X$ be a $0$-dimensional subset. Let $\iota \colon U:= X \setminus Z \hookrightarrow X$ be an open immersion.
Set $D_U := D \cap U$.  
 
Then the restriction homomorphism $\iota^* \colon T^1_{(X,D)} \rightarrow T^1_{(U,D_U)}$ is an isomorphism.  
\end{lem}

\subsection{Additional lemmas}
We need the following lemma essentially due to Professor Angelo Vistoli. 

\begin{lem}\label{vistoli}
Let $f \in \mathbb{C}[x,y,z]$ be a polynomial which defines a reduced divisor $0 \in D:= (f=0) \subset \mathbb{C}^3$ 
and $\Gamma:= \Sing D$. 
Assume that a polynomial $g \in \mathbb{C}[x,y,z]$ defines a smoothing $\mathcal{D}:= (f+tg =0) \subset \mathbb{C}^3 \times \mathbb{C}$ 
 of $D$ over the affine line $\mathbb{C}$. 
Let $h \in \mathbb{C}[x,y,z]$ be a polynomial such that  $\mult_p h \ge 2$ for $p \in \Gamma$.
Then $\mathcal{D}':= (f+t(h+g)=0)$ is also a smoothing of $D$.  
 \end{lem}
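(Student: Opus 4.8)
The plan is to reduce the statement to a purely local assertion around $\Gamma = \Sing D$ and then to exploit the fact that, because $\mult_p h \ge 2$ for every $p \in \Gamma$, the extra term $th$ contributes only \emph{higher order} corrections to the equations cutting out the singular points of the fibers. For a polynomial $\psi$, the fiber $(f + t\psi = 0) \subset \mathbb{C}^3$ is singular at a point $q$ exactly when $(q,t)$ lies in the relative singular locus
\begin{equation*}
\Sigma_\psi := V\!\left(f + t\psi,\ \partial_x(f + t\psi),\ \partial_y(f+t\psi),\ \partial_z(f + t\psi)\right) \subset \mathbb{C}^3 \times \mathbb{C}.
\end{equation*}
Since being a smoothing depends only on the fibers over a punctured disc around $t=0$, and since $D$ is smooth along $D \setminus \Gamma$ (so that $\partial_x f,\partial_y f,\partial_z f$ do not simultaneously vanish there and the fibers stay smooth near such points for small $t$), the condition that $(f + t\psi = 0)$ be a smoothing is equivalent to the germ of $\Sigma_\psi$ along $\Gamma \times \{0\}$ being contained in $\{t = 0\}$. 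Thus the hypothesis says $\Sigma_g$ has no points with $t \neq 0$ accumulating at $\Gamma \times \{0\}$, and I must prove the same for $\Sigma_{g+h}$.

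The key elementary observation is that $\mult_p h \ge 2$ for all $p \in \Gamma$ forces both $h$ and its first partials $\partial_x h, \partial_y h, \partial_z h$ to vanish identically along $\Gamma$. Consequently, for $q$ near $\Gamma$ one has $h(q) = O(\mathrm{dist}(q,\Gamma)^2)$ and $\partial_i h(q) = O(\mathrm{dist}(q,\Gamma))$; in the defining equations of $\Sigma_{g+h}$, namely $\partial_i f + t\partial_i g + t\partial_i h = 0$ and $f + tg + th = 0$, the added terms $t\partial_i h$ and $th$ therefore carry an extra factor tending to $0$ compared with the surviving terms along any arc tending to $\Gamma \times \{0\}$. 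This is the precise sense in which $th$ is a negligible perturbation near $\Gamma$. Note that one inclusion of germs, $\Sigma_g \subseteq \Sigma_{g+h}$, is in fact immediate once $t$ is known to vanish on $\Sigma_g$ (then $th, t\partial_i h \in I(\Sigma_g)$); the whole content is the reverse inclusion $\Sigma_{g+h}\subseteq\{t=0\}$.

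The main step is a limiting argument. Arguing by contradiction, I would assume $\Sigma_{g+h}$ has points $(q_n, t_n)$ with $t_n \neq 0$, $t_n \to 0$, $q_n \to p \in \Gamma$. By the curve selection lemma I may replace this sequence by an analytic arc $(q(\tau), t(\tau)) \in \Sigma_{g+h}$ with $(q(0), t(0)) = (p, 0)$ and $t(\tau) \not\equiv 0$. Substituting the Puiseux expansions of $q(\tau), t(\tau)$ into the equations and comparing orders of vanishing in $\tau$, the higher-order terms $t\partial_i h$ and $th$ drop out of the leading-order relations, which then reduce to those defining $\Sigma_g$. From this I would produce, via a quantitative inverse-function/approximation step, a genuine arc in $\Sigma_g$ also accumulating at $\Gamma \times \{0\}$ with $t \neq 0$, contradicting the hypothesis that $(f + tg = 0)$ is a smoothing.

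The hard part is exactly this correction step: upgrading ``the arc solves the equations of $\Sigma_g$ to leading order'' to ``there is a true point of $\Sigma_g$ nearby''. Because $\Gamma$ may be a curve and the Hessian of $f$ may be degenerate along it, the relevant estimate is not uniform; the subtlety is that the relation $\partial_i f(q) = -t(\partial_i g + \partial_i h)(q)$ holding on $\Sigma_{g+h}$ forces $\partial_i f$ to vanish to order at least that of $t$, so that the vanishing order of $h$ must be compared against the possibly large vanishing order of $(\partial_x f,\partial_y f,\partial_z f)$ along $\Gamma$. Controlling this requires a \L ojasiewicz-type inequality showing that the discarded $h$-terms vanish strictly faster than $(f,\partial_x f,\partial_y f,\partial_z f)$ near $\Gamma \times\{0\}$; equivalently, that $\Sigma_g$ and $\Sigma_{g+h}$ share the same germ along $\Gamma \times \{0\}$ away from $\{t=0\}$. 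This order comparison is where the multiplicity hypothesis $\mult_p h \ge 2$ is genuinely used, and it is the technical heart of the argument.
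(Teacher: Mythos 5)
Your proposal does not close: the step you yourself flag as ``the technical heart'' --- upgrading an arc that solves the equations of $\Sigma_g$ to leading order into a genuine point of $\Sigma_g$, via a \L ojasiewicz-type comparison --- is left entirely open, and as set up it is circular. On $\Sigma_{g+h}$ one has the identity $\partial_i f + t\,\partial_i g = -t\,\partial_i h$, so the quantities you call the ``surviving terms'' are \emph{equal} to the perturbation you want to discard; to show the $h$-terms are negligible you would need an independent lower bound on the distance of $(q,t)$ to $\Sigma_g$ in terms of the values of $(f+tg,\partial_x(f+tg),\dots)$, and the \L ojasiewicz exponent in such a bound is not controlled by the single hypothesis $\mult_p h\ge 2$ (the gradient $\nabla f$ may vanish to high order along $\Gamma$). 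There is also a smaller gap in your reduction: a smoothing is a statement about the whole affine fiber, and singular points of nearby fibers could escape to infinity rather than accumulate on $\Gamma\times\{0\}$, so ``smoothing'' is not equivalent to the germ of $\Sigma_\psi$ along $\Gamma\times\{0\}$ lying in $\{t=0\}$.

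The paper's proof avoids all of this with a short Bertini argument that you should compare against. First, since $(f+tg=0)$ is a smoothing, necessarily $\mult_p g\le 1$ for every $p\in\Gamma$ (otherwise $p$ would stay a singular point of every fiber). Now consider the pencil $C_{[s:t]}=(sf+t(h+g)=0)$. By Bertini, $C_{[s:t]}$ is smooth away from the base locus $(f=h+g=0)$ for all but finitely many $[s:t]$. At a base point $p$: if $p\in\Gamma$ then $C_{[0:1]}=(h+g=0)$ is smooth at $p$, because $\mult_p h\ge2$ and $\mult_p g\le 1$ force $\mult_p(h+g)=1$; if $p\notin\Gamma$ then $C_{[1:0]}=D$ is smooth at $p$. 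Since smoothness at a fixed base point is an open condition on $[s:t]$, all but finitely many members of the pencil are smooth at every base point, hence smooth everywhere, and $(f+t(h+g)=0)$ is a smoothing. Note that this argument uses the smoothing hypothesis on $g$ only through the elementary multiplicity bound $\mult_p g\le 1$, and it handles the behavior at infinity for free; no analysis of the relative singular locus is needed.
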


\begin{proof}
	Note that $\mult_p g \le 1$ for $p \in \Gamma$ since $(f+tg=0)$ is a smoothing. 
Consider the linear system 
\[
\{C_{[s:t]} := (sf+t(h+g) =0) \subset \mathbb{C}^3 \mid [s:t] \in \mathbb{P}^1 \}. 
\]
By Bertini's theorem, the divisor $C_{[s:t]}$ is smooth away from the base points of the linear system, for all but finitely many values of $[s:t]$.  
If $p \in \mathbb{C}^3$ is a base point of the linear system, then either $p \in \Gamma$, in which case $C_{[0:1]}$ is smooth at $p$, 
or is not, and in this case, $C_{[1:0]}$ is smooth at $p$. 
Since being smooth at a base point is an open condition, we have that $C_t$ is smooth at all points of $\mathbb{C}^3$ for all but finitely many values of $t$.
\end{proof}

\vspace{2mm}

We also use the following lemma on the vanishing of a cohomology group on a toric variety 
which is a consequence of the vanishing theorem due to Fujino (\cite{Fujino}). 

\begin{lem}\label{QFujinotoric}
	Let $U$ be an affine toric variety and 
 $\pi \colon V \rightarrow U$ a projective toric morphism of toric varieties. 
 Let $V' \subset V$ be the smooth locus of $V$ and 
 $\iota \colon V' \hookrightarrow V$ the open immersion.  
  Let $D$ be a $\pi$-ample $\mathbb{Q}$-Cartier divisor on $V$ and $D':= D|_{V'}$ 
  its restriction on $V'$. 
 
 Then we have 
 \[
H^i(V, \iota_* (\Omega^j_{V'}(D'))) =0 
\]
for $i>0$ and $j \ge 0$. 
	\end{lem}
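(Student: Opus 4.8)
The plan is to reduce the statement to a relative vanishing theorem of Fujino by exploiting the affineness of the base $U$, and then to identify the sheaf $\iota_*(\Omega^j_{V'}(D'))$ with the twisted reflexive differential sheaf appearing in the toric Bott--Steenbrink--Danilov vanishing theorem.

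First I would use the Leray spectral sequence for $\pi$. Since $U$ is affine and $\pi$ is proper, the higher direct images $R^q\pi_*\mathcal{F}$ of any coherent sheaf $\mathcal{F}$ are coherent on $U$ and hence have no higher cohomology, so the spectral sequence $E_2^{p,q} = H^p(U, R^q\pi_*\mathcal{F}) \Rightarrow H^{p+q}(V, \mathcal{F})$ degenerates and yields $H^i(V, \mathcal{F}) \cong \Gamma(U, R^i\pi_*\mathcal{F})$. Applying this to $\mathcal{F} := \iota_*(\Omega^j_{V'}(D'))$, it suffices to prove the relative vanishing $R^i\pi_*\mathcal{F} = 0$ for $i > 0$.

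Next I would identify $\mathcal{F}$ with the sheaf occurring in Fujino's theorem. Since $V'$ is smooth and hence locally factorial, $\mathcal{O}_{V'}(D')$ is a genuine line bundle on $V'$; thus $\Omega^j_{V'}(D') = \Omega^j_{V'} \otimes \mathcal{O}_{V'}(D')$ and its pushforward $\mathcal{F} = \iota_*(\Omega^j_{V'} \otimes \mathcal{O}_{V'}(D'))$ is the reflexive hull on $V$ of the twisted Zariski--Danilov differential sheaf $\hat{\Omega}^j_V \otimes \mathcal{O}_V(D)$, where $\hat{\Omega}^j_V := \iota_*\Omega^j_{V'}$. Because $D$ is only $\mathbb{Q}$-Cartier, I would first replace it, using the fact that on a toric variety every Weil divisor is linearly equivalent to a torus-invariant one, by a torus-invariant $\pi$-ample $\mathbb{Q}$-Cartier divisor; this does not change $\mathcal{F}$ up to isomorphism and preserves $\pi$-ampleness. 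This puts $\mathcal{F}$ into the exact form to which Fujino's relative vanishing theorem (\cite{Fujino}) applies, giving $R^i\pi_*\mathcal{F} = 0$ for $i > 0$ and all $j \ge 0$. Combined with the previous step this proves the lemma.

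The main obstacle I anticipate is matching conventions precisely with Fujino's statement: his theorem is phrased for the (possibly logarithmic) Zariski differential sheaves of a proper toric morphism twisted by a $\pi$-ample $\mathbb{Q}$-Cartier divisor, and I must check that the non-logarithmic reflexive sheaf $\hat{\Omega}^j_V$ used here is the one covered (it is the case of empty boundary in his logarithmic version) and that any rounding in the $\mathbb{Q}$-Cartier formulation is compatible with taking $D$ torus-invariant as above. Once these bookkeeping points are settled, the vanishing follows by direct citation.
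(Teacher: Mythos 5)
Your reduction via the Leray spectral sequence is fine ($U$ affine kills $H^p(U,R^q\pi_*\mathcal{F})$ for $p>0$, so $H^i(V,\mathcal{F})\simeq H^0(U,R^i\pi_*\mathcal{F})$), and so is the identification of $\iota_*(\Omega^j_{V'}(D'))$ with the reflexive hull of $\hat{\Omega}^j_V\otimes\mathcal{O}_V(D)$. The problem is the final step, where you propose to conclude ``by direct citation'' of Fujino's relative vanishing theorem. The point you defer as ``bookkeeping'' --- that $D$ is only $\mathbb{Q}$-Cartier --- is in fact the entire content of the lemma. Fujino's vanishing statements for the intermediate sheaves $\tilde{\Omega}^j$ are formulated for twists by an $f$-ample \emph{line bundle}; here $\mathcal{O}_V(D)$ is a rank-one reflexive sheaf that is not invertible (in the applications $D$ is $-K_{U_1}$ or $-K_{W_1}-U_1$ on a quotient-singular toric variety), and $\iota_*(\Omega^j_{V'}(D'))$ is not of the form $\hat{\Omega}^j_V\otimes L$. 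Replacing $D$ by a linearly equivalent torus-invariant divisor does not change this. So as written, the citation does not cover the case you need, and no argument is supplied to bridge it.

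The paper closes exactly this gap, and does so without ever invoking Fujino's vanishing theorem as a black box. It picks $l\gg 0$ so that $lD$ is $\pi$-very ample and relative Serre vanishing (over the affine base) gives $H^i(V,\iota_*\Omega^j_{V'}(lD))=0$; then it uses the single toric ingredient from Fujino's paper, namely the split injection $\Omega^j_{V'}\hookrightarrow F'_*\Omega^j_{V'}$ for the $l$-fold multiplication map $F_l\colon V\to V$, together with $(F')^*D'=lD'$ and the projection formula, to embed $H^i(V,\iota_*(\Omega^j_{V'}(D')))$ into $H^i(V,\iota_*\Omega^j_{V'}(lD))=0$. If you want to salvage your write-up, you should replace the direct citation by this multiplication-map descent from the Cartier multiple $lD$ to $D$ itself; the Leray/Serre part of your argument then slots in as the vanishing for $lD$.
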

	
	\begin{proof} 
		
		By the Serre vanishing theorem, 
		we can take  a sufficiently large integer $l$ such that $lD$ is $\pi$-very ample and 
		$H^i(V, \iota_*\Omega^j_{V'} (lD))=0$. 
		
		Let $F:=F_l \colon V \rightarrow V$ be the $l$-times multiplication map as introduced in \cite[2.1]{Fujino}. 
		Note that our symbol $V'$ is different from that in \cite{Fujino}. 
		Let $F':= F|_{V'} \colon V' \rightarrow V'$ be the multiplication map on $V'$. 
		Note that we have a split injection $\Omega^j_{V'} \hookrightarrow F'_* \Omega^j_{V'}$ (\cite[2.6]{Fujino}). 
		Since we have $(F')^* D' = lD'$, we obtain  
		\begin{equation}
		F_* \iota_*\Omega^j_{V'}((F')^*D') )
		= \iota_* F'_* \Omega^j_{V'}((F')^* D')) 
		 \simeq \iota_* (F'_* \Omega^j_{V'})(D') \hookleftarrow \iota_* (\Omega^j_{V'}(D')). 
		\end{equation}
		This implies that 
		\[
		H^i(V, \iota_* (\Omega^j_{V'}(D'))) \hookrightarrow H^i(V, F_* (\iota_*(\Omega^j_{V'}((F')^*D') ))) \simeq  H^i(V, \iota_* \Omega^j_{V'}(lD))=0. 
		\]
		Thus we obtain $H^i(V, \iota_* (\Omega^j_{V'}(D'))) =0$. 
		We finish the proof of Lemma \ref{QFujinotoric}.  
		\end{proof}

\subsection{Blow-down morphism of deformations}

	Let $X$ be an algebraic variety and $\tilde{X} \rightarrow X$ its resolution of singularities. 
	Suppose we have a deformation $\tilde{\mathcal{X}} \rightarrow \Spec A$ over an Artin ring $A$. 
	If $X$ has only rational singularities, 
	we can ``blow-down'' the deformation $\tilde{\mathcal{X}}$ to a deformation of $X$.   

	We need the following proposition in general setting. 

	\begin{prop}\label{bldownlem}(\cite[Section 0]{wahl})
	Let $X$ be an algebraic scheme over an algebraically closed field $k$ and $A \in \Art_k$. 
	Let $\mathcal{X} \rightarrow \Spec A$ be a deformation of $X$ and 
	$\mathcal{F}$ a quasi-coherent sheaf on $\mathcal{X}$, flat over $A$, inducing 
	$F:= \mathcal{F} \otimes_A k$ on $X$. 
	Let 
	\[
	\phi^0 \colon H^0(\mathcal{X}, \mathcal{F}) \otimes_A k \rightarrow H^0(X, F)
\]
be the natural restriction map. 

	If $H^1(X, F) =0$, then $\phi^0$ is an isomorphism and $H^0(\mathcal{X}, \mathcal{F})$ is $A$-flat. 
	\end{prop}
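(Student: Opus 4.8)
The plan is to read $\phi^0$ as the natural base-change homomorphism
\[
\phi^0 \colon H^0(\mathcal{X}, \mathcal{F}) \otimes_A k \to H^0(X, F),
\]
and to prove the two assertions together by induction on the length $\ell_A(A)$. The base case $A = k$ is immediate, since then $\mathcal{X} = X$, $\mathcal{F} = F$, and $\phi^0$ is the identity of the $k$-vector space $H^0(X,F)$. For the inductive step I would fix a small extension $0 \to (t) \to A \to A' \to 0$ with $(t) \cong k$, put $\mathcal{X}' := \mathcal{X} \times_{\Spec A} \Spec A'$ and $\mathcal{F}' := \mathcal{F} \otimes_A A'$, and observe that $(\mathcal{X}', \mathcal{F}')$ satisfies the same hypotheses over $A'$, so that the inductive hypothesis applies to it.

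First I would exploit the $A$-flatness of $\mathcal{F}$: tensoring the small extension with $\mathcal{F}$ over $A$ keeps the sequence exact and, using $\mathcal{F} \otimes_A (t) \cong \mathcal{F} \otimes_A k = F$, produces a short exact sequence of sheaves
\[
0 \to F \to \mathcal{F} \to \mathcal{F}' \to 0
\]
on $\mathcal{X}$. Because $H^1(X,F) = 0$, the associated long exact cohomology sequence collapses to
\[
0 \to V \xrightarrow{\alpha} M \xrightarrow{\beta} N \to 0,
\]
where $V := H^0(X,F)$, $M := H^0(\mathcal{X}, \mathcal{F})$ and $N := H^0(\mathcal{X}', \mathcal{F}')$. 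By the inductive hypothesis $N$ is $A'$-flat, hence free (flat modules over an Artinian local ring are free), and $\phi^0$ for $A'$ identifies $N \otimes_{A'} k$ with $V$; thus an $A'$-basis $\{n_\lambda\}$ of $N$ reduces to a $k$-basis $\{\bar n_\lambda\}$ of $V$.

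The heart of the argument is then to lift $\{n_\lambda\}$ to elements $m_\lambda \in M$ with $\beta(m_\lambda) = n_\lambda$ and to prove that $\{m_\lambda\}$ is an $A$-basis of $M$. The key relation, which I expect to be the main technical point, is
\[
t\, m_\lambda = \alpha(\bar n_\lambda) \quad \text{in } M.
\]
It comes from the observation that, at the level of sheaves, multiplication by $t$ on $\mathcal{F}$ factors as the reduction $\mathcal{F} \to F$ followed by the inclusion $F \cong \mathcal{F}\otimes_A(t) \hookrightarrow \mathcal{F}$; that is, $\mu_t = \alpha \circ \phi^0 \circ q$ where $q \colon M \to M \otimes_A k$ is the quotient. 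Evaluating at $m_\lambda$ and using $\phi^0(\overline{m_\lambda}) = \bar n_\lambda$ gives the identity. Granting it, $A$-linear independence of $\{m_\lambda\}$ follows by applying $\beta$ (which forces the coefficients into $(t)$) and then invoking the injectivity of $\alpha$ together with the linear independence of $\{\bar n_\lambda\}$; generation follows because any $m \in M$ can first be corrected modulo $\ker \beta = \alpha(V) = tM$, and the identity shows $tM = \sum_\lambda k\, t m_\lambda$. Hence $M \cong A^{(\Lambda)}$ is free, so $H^0(\mathcal{X}, \mathcal{F})$ is $A$-flat, and $\phi^0$ carries the induced basis $\{\overline{m_\lambda}\}$ of $M \otimes_A k$ onto the basis $\{\bar n_\lambda\}$ of $V$, whence it is an isomorphism. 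The delicate point throughout is that $\alpha$ and $\beta$ must be recognized as the section-level incarnations of ``multiply by $t$'' and ``reduce modulo $t$'', which is precisely where the flatness of $\mathcal{F}$ enters.
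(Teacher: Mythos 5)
Your proof is correct: reading $\phi^0$ as the base-change map $H^0(\mathcal{X},\mathcal{F})\otimes_A k \to H^0(X,F)$, the induction over a small extension via the flatness-derived sequence $0 \to F \to \mathcal{F} \to \mathcal{F}' \to 0$, the vanishing of $H^1(X,F)$, and the identity $t\,m_\lambda = \alpha(\bar n_\lambda)$ coming from the factorization of multiplication by $t$ through the reduction $\mathcal{F}\to F$ is exactly the standard argument. The paper itself offers no proof of this proposition (it only cites Wahl, Section 0), and your argument is essentially the one found there, so there is nothing to reconcile.
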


	Proposition \ref{bldownlem} implies the following. 

	\begin{cor}\label{blowdowncor} 
	Let $f \colon X \rightarrow Y$ be a proper birational morphism of integral normal $k$-schemes. 
	Assume that $R^1 f_* \mathcal{O}_Y =0$. 

	Then there exists a morphism of functors 
	\[
	f_* \colon \Def_X \rightarrow \Def_Y 
	\]
	defined as follows: For a deformation $\mathcal{X} \rightarrow \Spec A$ of $X$ over $A \in \Art_k$, we define 
	its image by $f_*$ as the scheme $\mathcal{Y} = (Y, f_* \mathcal{O}_{\mathcal{X}})$.  

	We call this transformation the {\bf blow-down} morphism. 
	\end{cor}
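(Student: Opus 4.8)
The plan is to deduce the Corollary from Proposition~\ref{bldownlem}, applied locally over $Y$ to the sheaf $\mathcal{F} = \mathcal{O}_{\mathcal{X}}$, using the hypothesis $R^1 f_* \mathcal{O}_X = 0$ to supply the cohomological vanishing. Since a deformation $\mathcal{X} \to \Spec A$ over an Artinian $A \in \mathcal{A}$ leaves the underlying topological space unchanged, I identify $|\mathcal{X}| = |X|$, so that $f$ induces a continuous map $|X| \to |Y|$ and $f_* \mathcal{O}_{\mathcal{X}}$ is a sheaf of $A$-algebras on $|Y|$. Then $\mathcal{Y} := (Y, f_* \mathcal{O}_{\mathcal{X}})$ is an $A$-ringed space, and the goal is to show it is a scheme, flat over $A$, with central fibre $Y$, functorially in $\mathcal{X}$.

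First I would verify everything on an affine cover of $Y$. Fix an affine open $V \subseteq Y$, set $U := f^{-1}(V)$ and let $\mathcal{U} := \mathcal{X}|_U \to \Spec A$ be the restricted deformation, so that $\Gamma(V, f_* \mathcal{O}_{\mathcal{X}}) = H^0(\mathcal{U}, \mathcal{O}_{\mathcal{U}})$. Because $V$ is affine and $R^1 f_* \mathcal{O}_X = 0$, the Leray spectral sequence yields $H^1(U, \mathcal{O}_U) = H^0(V, R^1 f_* \mathcal{O}_X) = 0$. Applying Proposition~\ref{bldownlem} to $\mathcal{U}$ with $\mathcal{F} = \mathcal{O}_{\mathcal{U}}$ and $F = \mathcal{O}_U$, I conclude that $B_V := H^0(\mathcal{U}, \mathcal{O}_{\mathcal{U}})$ is $A$-flat and that $B_V \otimes_A k \xrightarrow{\ \sim\ } H^0(U, \mathcal{O}_U)$. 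As $f$ is proper birational and $Y$ is normal, $f_* \mathcal{O}_X = \mathcal{O}_Y$, whence $H^0(U, \mathcal{O}_U) = \Gamma(V, \mathcal{O}_Y)$ and $B_V \otimes_A k \cong \Gamma(V, \mathcal{O}_Y)$.

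Next I would glue the local models into a scheme. The $A$-algebras $B_V$ are compatible under restriction to smaller affine opens; since $A$ is Artinian, $\mathcal{O}_{\mathcal{X}}$ carries a finite $\mathfrak{m}_A$-adic filtration whose graded pieces are coherent $\mathcal{O}_X$-modules (by $A$-flatness), so $f_*$ and localisation behave well and $f_* \mathcal{O}_{\mathcal{X}}|_V \cong \widetilde{B_V}$. Because the nilpotent thickening $B_V \to B_V \otimes_A k = \Gamma(V, \mathcal{O}_Y)$ does not change topological spaces, the affine schemes $\Spec B_V$ glue to a scheme $\mathcal{Y}$ with $\mathcal{O}_{\mathcal{Y}} = f_* \mathcal{O}_{\mathcal{X}}$ and $|\mathcal{Y}| = |Y|$. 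Its structure morphism to $\Spec A$ is flat since each $B_V$ is $A$-flat, and the reduction $\mathcal{O}_{\mathcal{X}} \twoheadrightarrow \mathcal{O}_X$ pushes forward to a surjection $f_* \mathcal{O}_{\mathcal{X}} \twoheadrightarrow f_* \mathcal{O}_X = \mathcal{O}_Y$, defining a closed immersion $Y \hookrightarrow \mathcal{Y}$ which realises $Y$ as the central fibre. Thus $\mathcal{Y}$ is a deformation of $Y$ over $A$.

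Finally I would check that $\mathcal{X} \mapsto \mathcal{Y}$ defines a morphism of functors $f_* \colon \Def_X \to \Def_Y$. Well-definedness on equivalence classes is immediate: an $A$-isomorphism $\mathcal{X}_1 \cong \mathcal{X}_2$ compatible with $X \hookrightarrow \mathcal{X}_i$ pushes forward to one between the $\mathcal{Y}_i$. For naturality, given $A \to A'$ and $\mathcal{X}' := \mathcal{X} \times_{\Spec A} \Spec A'$, I must produce an isomorphism $f_* \mathcal{O}_{\mathcal{X}'} \cong (f_* \mathcal{O}_{\mathcal{X}}) \otimes_A A'$, i.e.\ locally $\Gamma(U, \mathcal{O}_{\mathcal{X}'}) \cong B_V \otimes_A A'$. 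The hard part will be precisely this base-change identity: Proposition~\ref{bldownlem} is stated only for the central fibre, so I must upgrade it to commutation of $f_* \mathcal{O}_{\mathcal{X}}$ with arbitrary base change in $\mathcal{A}$. This follows from the same vanishing $H^1(U, \mathcal{O}_U) = 0$ by the cohomology-and-base-change mechanism underlying Proposition~\ref{bldownlem} --- concretely, from the long exact cohomology sequences associated to the successive square-zero extensions building up $A'$, in which the obstruction to base change lies in $H^1(U, \mathcal{O}_U)$ and hence vanishes. By contrast, the gluing step above is routine relative $\Spec$, so the base-change and naturality verification is where the real content lies.
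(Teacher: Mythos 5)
Your proof is correct and follows the paper's intended route: the paper deduces the corollary in a single line from Proposition~\ref{bldownlem}, and you have simply filled in the affine-local application over $Y$, the gluing, and the base-change/functoriality details (correctly reading the hypothesis as $R^1 f_* \mathcal{O}_X = 0$, which is what the paper's statement must mean). Nothing further is needed.
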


	For a surface with non-rational singularities, Wahl considered ``equisingularity'' of deformations 
	via the blow-down transformation. Although the blow-down transformation is not always possible, 
	we can still consider the ``equisingular deformation functor'' as follows. 

	\begin{defn}
		Let $ U:= \Spec R$ be an affine normal surface over an algebraically closed field $k$ with a singularity at $p \in U$ 
		and $f \colon X \rightarrow U$ a resolution of a singularity such that $f^{-1}(p)$ has SNC support. 
		Wahl (\cite[(2.4)]{wahl}) defined an equisingular deformation of the resolution of a singularity as 
		a deformation of $(X,E)$ whose ``blow-down'' can be defined.  
		 More precisely, he defined a functor $\ES_X \colon \Art_{k} \rightarrow (Sets)$ by setting  
		 \[
		 \ES_X (A):= \{ 
		 (\mathcal{X}, \mathcal{E}) \in \Def_{(X,E)}(A) \mid H^0(\mathcal{X}, \mathcal{O}_{\mathcal{X}}): A\text{-flat}. 
		 \}, 
		 \]
		 where $\Art_k$ is a category of local Artin $k$-algebras with residue field $k$. 
		 There exists a natural transformation $f_* \colon \ES_X \rightarrow \Def_{U}$ and this induces a linear map 
		 $f_*(A_1) \colon \ES_X (A_1) \rightarrow \Def_U(A_1)$ on the tangent spaces. 
	\end{defn}

		 Equisingular deformation should preserve some properties of a singularity. 
		 For example, it is known that equisingular deformations of an isolated $2$-dimensional hypersurface singularity 
		 do not change the Milnor number (\cite{wahl}). 
		 In particular, smoothings of a hypersurface singularity can not be equisingular. 
		 However, the situation is a bit different in higher codimension case.  
		 Although a singularity has high multiplicity in general, an equisingular deformation may be induced by an equation of multiplicity one. 
		 This phenomenon does not happen in the hypersurface case as shown in Lemma \ref{vistoli}. 
		 In the following, we exhibit such an example due to Wahl (\cite{wahlpersonal}) of a deformation of an isolated complete intersection singularity (ICIS for short). 

	\begin{eg}	 
		 Let $U:= (xy-z^2 = x^4 +y^4 +w^2=0) \subset \mathbb{C}^4$ be an ICIS and 
		 $\mathcal{U}:= (xy - z^2 +tw = x^4 +y^4 +w^2=0) \subset \mathbb{C}^4 \times \mathbb{C}$ a deformation of $U$, 
		 where $x,y,z,w$ are coordinates on $\mathbb{C}^4$ and $t$ is a deformation parameter of $\mathbb{C}$. 
		For any value of $t$, the singularity $\mathcal{U}_t$ is a cone $(C_t, K_{C_t})$ for a smooth curve $C_t$ of genus $3$ and 
		its canonical bundle, that is, $\mathcal{U}_t \simeq \Spec \oplus_{k=0}^{\infty} H^0(C_t, kK_{C_t})$. 
		We see that $C_0 \simeq (xy-z^2 =x^4 +y^4 +w^2 =0) \subset \mathbb{P}(1,1,1,2)$ is a hyperelliptic curve 
		and $C_t$ for $t \neq 0$ is a smooth quartic curve in $\mathbb{P}^2$.  
		The singularity has a resolution 
		\[
		f_t \colon \Tot (\mathcal{O}_{C_t}(K_{C_t}) := \Spec \oplus_{k=0}^{\infty} \mathcal{O}_{C_t}(k K_{C_t}) \rightarrow \mathcal{U}_t, 
		\] 
		where $\Tot(\mathcal{O}_{C_t}(K_{C_t})$ is the total space of the line bundle $\mathcal{O}_{C_t}(K_{C_t})$. 
		It is actually a contraction of the zero section. 
		Thus we get a family of contractions $\tilde{\mathcal{U}} \rightarrow \mathcal{U}$. 
		Let $\eta_w \in T^1_{(U,p)}$ be the element corresponding to the deformation $\mathcal{U}$. 
		By the above description, we see that $\eta_w \in \Image f_*$, where $f:= f_0$. 
		Recall that $T^1_{(U,p)} \simeq \mathcal{O}_{U,p}^{\oplus 2}/ J_p$ for the Jacobian sub-module $J_p$ determined by the partial derivatives of 
		the defining equations of $U$. 
		Since the order of $w$ is one, we see that $\eta_w \not\in \mathfrak{m}_{U,p}^2 T^1_{(U,p)}$. 
		\end{eg}	
	
We use the pair version of the blow-down transformation as follows. 

Let $X$ be a normal variety with only rational singularities over $\mathbb{C}$, $D$ be a Cartier divisor on $X$ 
and $D = \sum_{j \in J} D_j$ be the decomposition into the irreducible components such that each $D_j$ is Cartier. 
Let $\mu \colon \tilde{X} \rightarrow X$ be a resolution of singularities of $X$. 
Let $\tilde{D} \subset \tilde{X}$ be the strict transform of $D$, 
$E$ be the exceptional locus of $\mu$ and 
$E= \sum_{i=1}^m E_i$ be the decomposition into the irreducible components. 
Since $X$ has only rational singularities, we see that $\mu_* \mathcal{O}_{\tilde{X}} \simeq \mathcal{O}_X$ and 
$R^1 \mu_* \mathcal{O}_{\tilde{X}} =0$. 

\begin{prop}\label{pairblowdownprop}(cf.\ \cite[Proposition 3.2]{kawamatacrelle}) 
Let $X, D, \tilde{X}, \tilde{D}, E$ be as above. 
Then we can define a morphism of functors 
\[
\mu_* \colon \Def_{(\tilde{X}, \tilde{D}+E)} \rightarrow \Def_{(X,D)}
\]
\end{prop}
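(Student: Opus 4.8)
The plan is to define the functor morphism $\mu_*$ on objects — sending a deformation of $(\tilde{X},\tilde{D}+E)$ over an Artin ring $A$ to one of $(X,D)$ over $A$ — and then to verify base-change compatibility and invariance under equivalence. So let $(\tilde{\mathcal{X}}, \tilde{\mathcal{D}}+\mathcal{E}) \in \Def_{(\tilde{X},\tilde{D}+E)}(A)$. First I would forget the divisor and apply the blow-down Corollary \ref{blowdowncor}, which is legitimate since $R^1\mu_*\mathcal{O}_{\tilde{X}}=0$ because $X$ has only rational singularities; this produces a flat deformation $\mathcal{X}=(X,\mu_*\mathcal{O}_{\tilde{\mathcal{X}}})$ of $X$ over $A$. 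The canonical adjunction $\mu^{-1}\mu_*\mathcal{O}_{\tilde{\mathcal{X}}}\to\mathcal{O}_{\tilde{\mathcal{X}}}$ furnishes a contraction $\tilde{\mu}\colon\tilde{\mathcal{X}}\to\mathcal{X}$ over $A$ that extends $\mu$ and restricts to $\mu$ on the central fibre.

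Next I would descend each component $D_j$. Since $D_j$ is Cartier we may write $\mu^*D_j=\tilde{D}_j+\sum_i c_{ij}E_i$ with $c_{ij}\in\mathbb{Z}_{\ge 0}$. Because a deformation of the pair is built from deformations of the individual irreducible components (Definition \ref{defn-pair}), I can form the relative Cartier divisor $\tilde{\mathcal{M}}_j:=\tilde{\mathcal{D}}_j+\sum_i c_{ij}\mathcal{E}_i$ on $\tilde{\mathcal{X}}$, whose central fibre is $\mu^*D_j$; hence the invertible sheaf $\mathcal{O}_{\tilde{\mathcal{X}}}(-\tilde{\mathcal{M}}_j)$ is $A$-flat with central fibre $\mathcal{O}_{\tilde{X}}(-\mu^*D_j)=\mu^*\mathcal{O}_X(-D_j)$. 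The crucial point — and the reason $E$ must be carried along in the pair — is that this combination is of pullback type, so by the projection formula $R^1\mu_*\mathcal{O}_{\tilde{X}}(-\mu^*D_j)=\mathcal{O}_X(-D_j)\otimes R^1\mu_*\mathcal{O}_{\tilde{X}}=0$. Applying the blow-down Proposition \ref{bldownlem} to $\mathcal{O}_{\tilde{\mathcal{X}}}(-\tilde{\mathcal{M}}_j)$ relatively over $X$ (on affine opens of $X$, where the relevant $H^1$ is computed by $R^1\mu_*$) then shows that $\mathcal{N}_j:=\tilde{\mu}_*\mathcal{O}_{\tilde{\mathcal{X}}}(-\tilde{\mathcal{M}}_j)$ is $A$-flat and commutes with base change, so its central fibre is $\mu_*\mu^*\mathcal{O}_X(-D_j)=\mathcal{O}_X(-D_j)$, using $\mu_*\mathcal{O}_{\tilde{X}}=\mathcal{O}_X$.

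The inclusion $\mathcal{O}_{\tilde{\mathcal{X}}}(-\tilde{\mathcal{M}}_j)\hookrightarrow\mathcal{O}_{\tilde{\mathcal{X}}}$ pushes to a map $\mathcal{N}_j\to\tilde{\mu}_*\mathcal{O}_{\tilde{\mathcal{X}}}=\mathcal{O}_{\mathcal{X}}$ realizing $\mathcal{N}_j$ as an ideal sheaf; being $A$-flat with invertible central fibre $\mathcal{O}_X(-D_j)$, it is itself invertible and cuts out a relative effective Cartier divisor $\mathcal{D}_j\subset\mathcal{X}$. Flatness of $\mathcal{O}_{\mathcal{D}_j}=\mathcal{O}_{\mathcal{X}}/\mathcal{N}_j$ over $A$ follows since $\mathcal{N}_j\otimes_A k=\mathcal{O}_X(-D_j)\hookrightarrow\mathcal{O}_X$ is injective, forcing $\mathrm{Tor}_1^A(\mathcal{O}_{\mathcal{D}_j},k)=0$, and its central fibre is $\mathcal{O}_{D_j}$. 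Setting $\mathcal{D}:=\sum_j\mathcal{D}_j$ yields $(\mathcal{X},\mathcal{D})\in\Def_{(X,D)}(A)$. I would emphasize that this construction does \emph{not} require $D_j$ to be normal: had I instead pushed forward $\mathcal{O}_{\tilde{\mathcal{X}}}(-\tilde{\mathcal{D}}_j)$, the relevant obstruction $R^1\mu_*\mathcal{O}_{\tilde{X}}(-\tilde{D}_j)$ equals $\mathrm{coker}(\mathcal{O}_{D_j}\to\mu_*\mathcal{O}_{\tilde{D}_j})$, which is nonzero precisely when $D_j$ is non-normal.

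Finally, because every pushforward above commutes with base change, the assignment $(\tilde{\mathcal{X}},\tilde{\mathcal{D}}+\mathcal{E})\mapsto(\mathcal{X},\mathcal{D})$ is compatible with homomorphisms $A\to A'$ and descends to equivalence classes — an isomorphism of deformations upstairs pushes forward to one downstairs — so it defines a morphism of functors $\mu_*\colon\Def_{(\tilde{X},\tilde{D}+E)}\to\Def_{(X,D)}$. I expect the main obstacle to be exactly this base-change and flatness control of the descended ideal $\mathcal{N}_j$; it is what dictates the pullback-type bookkeeping with the coefficients $c_{ij}$ and the presence of $E$ in the pair, and it is the only place where rationality of the singularities of $X$ (through $R^1\mu_*\mathcal{O}_{\tilde{X}}=0$) and Cartierness of the $D_j$ are used.
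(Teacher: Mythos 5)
Your proposal is correct and follows essentially the same route as the paper: blow down the total space using $R^1\mu_*\mathcal{O}_{\tilde{X}}=0$, then push forward the ideal of $\tilde{\mathcal{D}}_j+\sum_i c_{ij}\mathcal{E}_i$ (the paper's $\mathbf{I}_{\tilde{D}_j}\cdot\prod_i\mathbf{I}_{E_i}^{a_{i,j}}$), with flatness of the descended ideal coming from the projection formula applied to the pullback-type divisor $\mu^*D_j$. Your additional verifications (that the pushforward is an invertible ideal cutting out a relative Cartier divisor, and the remark on non-normal $D_j$) fill in details the paper leaves implicit but do not change the argument.
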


\begin{proof}
Consider a deformation $(\tilde{\mathcal{X}}, \sum_{j \in J} \tilde{\mathcal{D}}_j + \sum_{i=1}^m \mathcal{E}_i)$ of $(X, \tilde{D} +E)$ over $A \in \Art_{\mathbb{C}}$.  
We can blow down a deformation $\tilde{\mathcal{X}}$ of $\tilde{X}$ over $A$ as in Corollary \ref{blowdowncor} 
since $R^1 \mu_* \mathcal{O}_{\tilde{X}} =0$. 

Let $\mathbf{I}_{\tilde{D}_j}, \mathbf{I}_{E_i} \subset \mathcal{O}_{\tilde{\mathcal{X}}}$ be the ideal sheaves 
of given deformations of $\tilde{D}_j, E_i$ respectively. 
We can write 
\[
\mu^* D_j = \tilde{D}_j + \sum_{i=1}^m a_{i,j} E_i 
\]
by some non-negative integers $a_{i,j}$. 
We can define a deformation of $D_j \subset X$ by the ideal 
\[
\mu_* \left( \mathbf{I}_{\tilde{D}_j} \cdot \prod_{i=1}^m \mathbf{I}_{E_i}^{a_{i,j}} \right) \subset \mathcal{O}_{\mathcal{X}}. 
\]
We can check that this ideal is $A$-flat by Proposition \ref{bldownlem} (iii) and 
\[
R^1 \mu_* \mathcal{O}_{\tilde{X}} (-\tilde{D}_j - \sum_{i=1}^m a_{i,j} E_i) = R^1 \mu_* \mu^* \mathcal{O}_X (-D_j) = 0. 
\]
\end{proof}

\begin{eg}\label{eg:blowdown}
	Let $D \subset U$ be a reduced divisor in a smooth $3$-fold $U$. 
	Let $\mu \colon \tilde{U} \rightarrow U$ be a proper birational morphism from another smooth variety $\tilde{U}$. 
	Let $\tilde{D} \subset \tilde{U}$ be the strict transform of $D$ and $E$ the $\mu$-exceptional divisor. 
	Then we can define a natural transformation $\mu_* \colon \Def_{(\tilde{U}, \tilde{D}+E)} \rightarrow \Def_{(U,D)}$ and 
	this induces a homomorphism $\mu_* \colon T^1_{(\tilde{U}, \tilde{D}+E)} \rightarrow T^1_{(U,D)}$ on the tangent spaces.   
	We use this homomorphism in the proof of Lemma \ref{lem:logkerm^2}. 
	The point is that we can define the blow-down transformation even if some irreducible component of $D$ has non-isolated singularities. 
	When $D$ has only isolated singularities, the definition of the blow-down transformation is easier (See (\ref{mu1*defn}), for example). 
	\end{eg}

\section{Deformations of elephants with isolated singularities}
In this section, we treat  deformations of a pair of a $\mathbb{Q}$-Fano $3$-fold and a member of $|{-}K_X|$ with only isolated singularities. 

\subsection{First blow-up}\label{firstblowupsection}
Consider a $\mathbb{Q}$-Fano $3$-fold $X$ and its elephant $D$ with only isolated singularities. 
By Theorem \ref{qsmqfanothmintro}, we can  assume 
$X$ with only quotient singularities and $A_{1,2}/4$-singularities for the proof of Theorem \ref{simulqsmisolthmintro}. 

Takagi proved the following theorem on singularities on general elephants of a $\mathbb{Q}$-Fano $3$-fold by using 
the standard argument. 

\begin{thm}\label{thm:takagielephant}(\cite[Proposition 1.1]{takagi2})
	Let $X$ be a $\mathbb{Q}$-Fano $3$-fold. Assume that there exists $D_0 \in |{-}K_X|$ such that 
	$D_0$ is normal near the non-Gorenstein points of $X$. 
	
	Then there exists a normal member $D \in |{-}K_X|$ such that $D$ is Du Val outside the non-Gorenstein points.   
	\end{thm}

Take a non-Du Val singularity $p$ on $D$ and its Stein neighborhood $U \subset X$.  
We first prepare lemmas on suitable weighted blow-ups of the Stein neighborhood $U$ with ``negative discrepancies''. 
By Theorem \ref{thm:takagielephant}, it is enough to consider $U$ which is analytic locally isomorphic to either of the following; 

\begin{itemize}
	\item $\mathbb{C}^3/ \mathbb{Z}_r(1,a,r-a)$ for some coprime integers $r$ and $a$, 
	\item $(x^2+y^2 +z^3+u^2=0)/ \mathbb{Z}_4 \subset \mathbb{C}^4/\mathbb{Z}_4(1,3,2,1)$.
	\end{itemize}

We argue on these explicit spaces. 
We use the same symbol $0$ for the origin of these spaces. 

\vspace{5mm}

For $U= \mathbb{C}^3/ \mathbb{Z}_r(1,a, r-a)$, we can take $1/r(1,a,r-a)$-weighted blow-up for the first blow-up as follows. 

\begin{lem}\label{nonGorwtlem}
	Let $U=\mathbb{C}^3/ \mathbb{Z}_r(1,a,r-a)$ be the quotient variety for some coprime integers $r$ and $a$ such that 
	$0< a < r$ and 
	$D \in |{-}K_U|$  an anticanonical divisor with only isolated singularity at $0 \in U$. 
	Let $\pi_U \colon V=\mathbb{C}^3 \rightarrow U$ be the quotient morphism and $\Delta := \pi_U^{-1}(D)$. 
	Assume that $\mult_0 \Delta  \ge 2$. 
	Let $\mu_1 \colon U_1 \rightarrow U$ be the weighted blow-up with weights $1/r(1,a,r-a)$ and 
	$E_1$ its exceptional divisor. 

Then we have an inequality on the discrepancy   
\[
a(E_1, U, D) \le -1. 
\]
	\end{lem}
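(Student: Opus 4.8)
The plan is to compute the discrepancy $a(E_1,U,D)$ directly from the two weighted blow-up formulas and then bound the $v_1$-weight of the defining equation of $D$ from below using the multiplicity hypothesis. First I would observe that, since $D \in |{-}K_U|$, we have $K_U + D \sim 0$, so $\mu_1^*(K_U+D) = 0$ and the discrepancy $a(E_1,U,D)$ is simply the coefficient of $E_1$ in $K_{U_1}+\tilde{D}$. Writing $v_1 = \frac{1}{r}(1,a,r-a)$ and applying (\ref{discrepancywtblup}) gives $K_{U_1} = \mu_1^* K_U + \frac{1}{r}(1 + a + (r-a) - r)E_1 = \mu_1^* K_U + \frac{1}{r}E_1$, while (\ref{stricttrfwtblup}) gives $\tilde{D} = \mu_1^* D - \wt_{v_1}(f)E_1$, where $f$ is the $\mathbb{Z}_r$-semi-invariant polynomial with $\Delta = (f=0)$. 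Adding these and using $\mu_1^*(K_U+D)=0$ yields
\[
a(E_1,U,D) = \frac{1}{r} - \wt_{v_1}(f).
\]
Thus it suffices to prove $\wt_{v_1}(f) \ge \frac{r+1}{r}$.

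Next I would pin down the $\mathbb{Z}_r$-weight of $f$. Because $D \in |{-}K_U|$, there is a rational $3$-form $\omega$ on $U$ with $\mathrm{div}(\omega) = -D$; pulling back along the quotient map (which is étale in codimension one) gives $\pi_U^*\omega = c\,\frac{dx\wedge dy\wedge dz}{f}$ for a nonzero constant $c$. Since $dx\wedge dy\wedge dz$ is semi-invariant of weight $1+a+(r-a) \equiv 1 \pmod r$ and $\omega$ descends to $U$, the form $\frac{dx\wedge dy\wedge dz}{f}$ must be $\mathbb{Z}_r$-invariant, forcing $f$ to be semi-invariant of weight $\equiv 1 \pmod r$. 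Concretely, every monomial $x^{i_1}y^{i_2}z^{i_3}$ occurring in $f$ satisfies $i_1 + a i_2 + (r-a)i_3 \equiv 1 \pmod r$, and $\wt_{v_1}(f) = \frac{1}{r}\min(i_1 + a i_2 + (r-a)i_3)$ over these monomials.

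Finally I would combine this with $\mult_0 \Delta \ge 2$. Since $0 < a < r$ we have $a \ge 1$ and $r - a \ge 1$, hence $i_1 + a i_2 + (r-a)i_3 \ge i_1 + i_2 + i_3$, i.e., the numerator of each monomial's weight is at least its total degree. The hypothesis $\mult_0 \Delta \ge 2$ says $f$ has no monomials of total degree $\le 1$, so every monomial has numerator $\ge 2$; being also $\equiv 1 \pmod r$, the numerator must be at least $r+1$. Therefore $\wt_{v_1}(f) \ge \frac{r+1}{r}$ and $a(E_1,U,D) \le \frac{1}{r} - \frac{r+1}{r} = -1$, as desired. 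The one step that needs care—and the main conceptual point rather than a computation—is the determination of the residue class of the weight of $f$ modulo $r$ from the condition $D \in |{-}K_U|$; once that is in hand, the multiplicity bound closes the argument immediately.
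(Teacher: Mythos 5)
Your proof is correct and follows essentially the same route as the paper: both compute $a(E_1,U,D)=\frac{1}{r}(1-m_D)$ from the weighted blow-up formulas (\ref{discrepancywtblup}) and (\ref{stricttrfwtblup}) and use $\mult_0\Delta\ge 2$ to bound the weight of $f$ from below. The only difference is the final step: the paper concludes $a(E_1,U,D)\le -1$ by observing that the discrepancy is a negative \emph{integer} because $K_U+D$ is Cartier, whereas you make that integrality explicit by showing $f$ is $\mathbb{Z}_r$-semi-invariant of weight $\equiv 1 \pmod{r}$, which yields the (slightly sharper) bound $m_D\ge r+1$.
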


\begin{proof}
	Let $f= \sum f_{i,j,k} x^i y^j z^k$ be the defining equation of $\Delta \subset \mathbb{C}^3$ at $0 \in \mathbb{C}^3$. 
	By the formulas in Section \ref{weightedblupsection}, we have 
	\[
	K_{U_1} = \mu_1^* K_U + \frac{1}{r}(1+a+(r-a)-r)E_1 = \mu_1^* K_U + \frac{1}{r} E_1, 
	\]
	\[
	\mu_1^* D = D_1 + \frac{m_D}{r} E_1,   
	\]
	where $D_1 \subset U_1$ is the strict transform of $D$ and  $m_D:= \min \{ i+aj+(r-a)k \mid f_{i,j,k} \neq 0 \}$. 
	We see that $m_D \ge 2$ since $\Delta$ is singular. 
	Thus we can write
	\[
	K_{U_1} +D_1 = \mu_1^* (K_U +D) + \frac{1}{r}(1-m_D) E_1  
	\]
	and $\frac{1}{r}(1-m_D) <0$. 
	Since $K_U +D$ is a Cartier divisor, we see that $\frac{1}{r}(1-m_D)$ is a negative integer. 
	Thus $\mu_1$ satisfies the required condition.  
	\end{proof}

	\vspace{5mm}

	Next we consider a neighborhood  of an $A_{1,2}/4$-singularity. 
	We describe a necessary weighted blow-up in the following example. 
	
	\begin{eg}\label{exampleA12weighted}
		Let $U:= (x^2+y^2+z^3+u^2=0)/ \mathbb{Z}_4(1,3,2,1) \subset \mathbb{C}^4/ \mathbb{Z}_4(1,3,2,1)$ 
		be a neighborhood of an $A_{1,2}/4$-singularity.
		Let $\mu_1 \colon U_1 \rightarrow U$ be the weighted blow-up with weights $1/4(1,3,2,1)$ and 
		$E_1 \subset U_1$ its exceptional divisor. 
		Let $\nu_1 \colon W_1 \rightarrow W:= \mathbb{C}^4/ \mathbb{Z}_4(1,3,2,1)$ be the weighted blow-up of weights $1/4(1,3,2,1)$ 
		and $F_1 \subset W_1$ the exceptional divisor. 
	
		$U_1$ is covered by four affine pieces 
		\begin{itemize} 
			\item $D_+(x)$: $(1+ x(y^2+z^3)+u^2=0) \subset \mathbb{C}^4$,  
			\item $D_+(y)$: $(x^2+y(1+z^3)+u^2 =0) \subset \mathbb{C}^4/ \mathbb{Z}_3(2,1,1,2)$,
			\item $D_+(z)$: $(x^2+ z(1+y^2)+ u^2=0) \subset \mathbb{C}^4/ \mathbb{Z}_2(1,1,0,1)$, 
			\item $D_+ (u)$: $(x^2+u(y^2+z^3)+1=0) \subset \mathbb{C}^4$. 
			\end{itemize}
		We can compute that $U_1$ has two ordinary double points $p_1, p_2$, a $1/2(1,1,1)$-singularity $q_2$ and a $1/3(1,2,1)$-singularity $q_3$, 
		where $p_1, q_3 \in D_+(y)$ and $p_2, q_2 \in D_+(z)$. 
		We see that 
		\[
		E_1 \simeq (x^2+u^2=0) \subset \mathbb{P}(1,3,2,1) \simeq F_1, 
		\]
		where we regard $x,y,z,u$ as coordinates on $\mathbb{P}(1,3,2,1)$.  
		We also see that $E_1$ consists of two irreducible components 
		$E_{1,1}, E_{1,2}$ corresponding to functions $x+\sqrt{-1}u$ and $x-\sqrt{-1}u$. 
		We see that $E_{1,1}$ and $E_{1,2}$ are both isomorphic to $\mathbb{P}(1,2,3)$ and 
		they intersect transversely outside $\Sing U_1= \{p_1, p_2, q_2, q_3 \}$.
		\end{eg}

	The weighted blow-up $\mu_1$ in the example satisfies the following property on the discrepancy.  
	
	\begin{lem}\label{A12-wtdblowup}
		Let $U:= (x^2+y^2+z^3+u^2=0)/ \mathbb{Z}_4(1,3,2,1)$, $\mu_1 \colon U_1 \rightarrow U$ and $E_1 \subset U_1$ 
		be as in Example \ref{exampleA12weighted}. 
		Let $D\in |{-}K_U|$ be a divisor with only isolated singularity at $0 \in D$ which is not Du Val. And let $D_1 \subset U_1$ be the strict transform of $D$. 
		
		Then we have an inequality for the discrepancy
		\[
		a(E_{1,j}, U,D) \le -1
		\] 
for $j=1,2$, that is, $K_{U_1} + D_1 + E_1 - \mu_1^*(K_U+D) $ is anti-effective.  		
		
		\end{lem}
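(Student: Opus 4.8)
The plan is to follow the template of Lemma~\ref{nonGorwtlem}: compute the discrepancy from an ambient weighted blow-up by adjunction, and then force it to be a negative integer. First I would pass to the ambient blow-up $\nu_1 \colon W_1 \to W$ of $W := \mathbb{C}^4/\mathbb{Z}_4(1,3,2,1)$ with weights $v_1 = \tfrac14(1,3,2,1)$. By~(\ref{discrepancywtblup}) one has $K_{W_1} = \nu_1^* K_W + \tfrac34 F_1$, while $\wt_{v_1}(x^2+y^2+z^3+u^2) = \tfrac12$ (the minimum being attained by $x^2$ and $u^2$), so by~(\ref{stricttrfwtblup}) the strict transform satisfies $U_1 = \nu_1^* U - \tfrac12 F_1$. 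Since $F_1|_{U_1}$ is the reduced divisor $E_1 = (x^2+u^2=0)\subset F_1 = \mathbb{P}(1,3,2,1)$, that is $E_{1,1}+E_{1,2}$, adjunction gives $K_{U_1} = \mu_1^* K_U + (\tfrac34-\tfrac12)(E_{1,1}+E_{1,2}) = \mu_1^* K_U + \tfrac14(E_{1,1}+E_{1,2})$, so that $a(E_{1,j},U,0) = \tfrac14$ for $j=1,2$.

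Next I would write $D = (h=0)|_U$ for a $\mathbb{Z}_4$-semi-invariant $h$. Inspecting the weight of the log form $\tfrac{dx\wedge dy\wedge dz\wedge du}{(x^2+y^2+z^3+u^2)\,h}$ shows that sections of $-K_U$ correspond exactly to $h$ of weight $\equiv 1 \pmod 4$. With this, $a(E_{1,j},U,D) = \tfrac14 - \ord_{E_{1,j}}(D)$, where $\ord_{E_{1,j}}(D)$ is the coefficient of $E_{1,j}$ in $\mu_1^* D$. Because $D \in |{-}K_U|$, the divisor $K_U + D \sim 0$ is Cartier, and the generic point of each $E_{1,j}$ is a smooth point of $U_1$; hence the order along $E_{1,j}$ of the nowhere-zero log form trivialising $\mathcal{O}_{U_1}(K_U+D)$ is an integer, so $a(E_{1,j},U,D) \in \mathbb{Z}$. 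It therefore suffices to prove the \emph{strict} inequality $\ord_{E_{1,j}}(D) > \tfrac14$ for both $j$, since $a(E_{1,j},U,D) < 0$ then forces $a(E_{1,j},U,D) \le -1$ by integrality.

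The heart of the matter is this strict inequality, and I expect it to be the main obstacle. A weight-$1$ monomial has $v_1$-weight in $\{\tfrac14,\tfrac54,\tfrac94,\dots\}$, and $\ord_{E_{1,j}}(D) = \tfrac14$ occurs exactly when $h$ has a nonzero linear part $\alpha x + \beta u$; so the goal is to show that the hypotheses (isolated, non--Du Val) exclude such a linear part, giving $\wt_{v_1}(h)\ge\tfrac54$. Lifting to the index-one cover $\Delta = (x^2+y^2+z^3+u^2=0)$, a \emph{non-degenerate} linear part ($\alpha^2+\beta^2 \neq 0$) can be rotated by the $O(2)$-symmetry of $x^2+u^2$ to $x$; eliminating $x$ exhibits $\Delta \cap (h=0)$ as $y^2+z^3+(\text{unit})\,u^2+\cdots$, an $A_2$ singularity whose quotient is Du Val, contradicting the hypothesis. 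The delicate case, which I expect to be the real obstacle, is a \emph{degenerate} linear part proportional to $x \pm \sqrt{-1}\,u$, i.e.\ to a defining equation of $E_{1,1}$ or $E_{1,2}$: here eliminating the leading variable cancels the quadratic term $x^2+u^2$, the strict transform becomes tangent to one component, and the two discrepancies behave asymmetrically. Controlling this degenerate direction is exactly where the isolated-singularity hypothesis must be used in an essential way to rule out a surviving linear part and recover $\ord_{E_{1,j}}(D)\ge\tfrac54$ for \emph{both} $j$; this part of the argument is purely local and uses neither the Fano condition nor any global input.
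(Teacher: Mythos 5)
Your setup agrees with the paper's: the adjunction computation $a(E_{1,j},U,0)=\tfrac14$, the reduction to showing $\ord_{E_{1,j}}(\mu_1^*D)>\tfrac14$, and the use of the Cartier property of $K_U+D$ to upgrade ``negative'' to ``$\le -1$'' are all exactly what the paper does, phrased there as the Claim that the local equation $h$ of $D_V$ on the index-one cover lies in $\mathfrak{m}^2_{V,0}$, so that $\wt_{v_1}(\bar h)\ge\tfrac12$. Your treatment of a non-degenerate linear part ($\alpha^2+\beta^2\neq0$: rotate to $x$ by the orthogonal symmetry of $x^2+u^2$, eliminate $x$, obtain an $A_2$ point, contradiction) is the correct justification of the paper's one-line assertion in that case. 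But for the degenerate case $\alpha^2+\beta^2=0$ you prove nothing: you only announce that the isolated-singularity hypothesis ``must be used in an essential way.'' That is a genuine gap, not a deferred routine verification, and it is precisely the case that carries all the difficulty.

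It is worth knowing that the paper's own proof elides the same case: it asserts that \emph{any} nonzero linear part $ax+bu$ forces $D_V$ to be an $A_2$ point, which is only correct when $a^2+b^2\neq0$. When the linear part is proportional to $x\pm\sqrt{-1}\,u$, eliminating it against $x^2+u^2$ cancels the quadratic term in $u$ and leaves $y^2+z^3+(\text{terms of order}\ge 3)$, which need not be Du Val. Concretely, $h=x+\sqrt{-1}\,u+u^5$ is a weight-$1$ semi-invariant, and eliminating $x$ gives $D_V\cong\bigl(y^2+z^3+u^6\cdot(\text{unit})=0\bigr)$, a simple elliptic point; so $D$ is an elephant with an isolated, non--Du Val singularity at $0$, yet $h\notin\mathfrak{m}^2_{V,0}$ and $\wt_{v_1}(h)=\tfrac14$. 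For this $D$ the strict transform picks up the component $E_{1,1}$ but not $E_{1,2}$, and a chart computation on $D_+(u)$ gives $\ord_{E_{1,1}}(\mu_1^*D)=\tfrac54$ but $\ord_{E_{1,2}}(\mu_1^*D)=\tfrac14$, hence $a(E_{1,1},U,D)=-1$ while $a(E_{1,2},U,D)=0$. So the degenerate direction is not merely delicate: no amount of local analysis will recover $a(E_{1,j},U,D)\le-1$ for \emph{both} $j$ from the stated hypotheses, and either an additional hypothesis excluding a degenerate linear part or a different choice of blow-up is needed. Your instinct that this is the real obstacle was sound; neither your proposal nor the paper's argument overcomes it.
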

		
		\begin{proof}
			Let $V:= (x^2+y^2+z^3+u^2=0) \subset \mathbb{C}^4$ and $\pi_U \colon V \rightarrow U$ the index one cover. 
			Let $D_V:= \pi_U^{-1}(D) \subset V$ and $h \in \mathcal{O}_{V,0}$ the local equation of 
			$D_V$. 
			Let $\bar{h} \in \mathcal{O}_{\mathbb{C}^4,0}$ be the lift of $h$ by the surjection 
			$\mathcal{O}_{\mathbb{C}^4,0} \twoheadrightarrow \mathcal{O}_{V,0}$. 
			We can assume that $\bar{h}$ is $\mathbb{Z}_4$-equivariant. 
			
			\vspace{2mm}

			\noindent{\bf (Case 1)} $\bar{h} \in  \mathfrak{m}^2_{\mathbb{C}^4,0}$. 
			
	 Let $\Delta:=(\bar{h}=0)/\mathbb{Z}_4 \subset W$ be the divisor on $W$ defined by $\bar{h}$. 
	We can write $\bar{h}= \sum h_{ijkl} x^i y^j z^k u^l$ for some $h_{ijkl} \in \mathbb{C}$. 
	We have 
	\begin{equation}\label{Delta1rel}
	\nu_1^*\Delta = \Delta_1 + m_1 F_1, 
	\end{equation}
	where $\Delta_1 \subset W_1$ is the strict transform of $\Delta$ and 
	\begin{equation*}
	m_1 = \min \{ (i+3j+2k+l)/4 \mid h_{ijkl} \neq 0 \}.
	\end{equation*}
	Since $\mult_0 (\bar{h}) \ge 2$, we have $m_1 \ge 1/2$. 
	Thus, by restricting (\ref{Delta1rel}) to $U_1$, we obtain 
	\begin{equation}\label{D1rel}
	\mu_1^* D = D_1 + l_1 E_1  
	\end{equation}
      for some rational number $l_1 \ge 1/2$. 
	 
	 We can compute  
	 \[
	 K_{U_1} = \mu_1^* K_U + \frac{1}{4} E_1.  
	 \]
	Thus we obtain 
	\[
	K_{U_1} +D_1 = \mu_1^*(K_U +D) + (\frac{1}{4} - l_1) E_1. 
	\]
	Since $K_U +D$ is a Cartier divisor, we see that $1/4 - l_1$ is a negative integer. 
	Thus we get the required inequality of the discrepancy.
				
				\vspace{2mm}
				
				\noindent{\bf (Case 2)} 
				$\bar{h} \in \mathfrak{m}_{\mathbb{C}^4,0}\setminus \mathfrak{m}_{\mathbb{C}^4,0}^2$. 
				
				Let $g \in \mathbb{Z}_4$ be the generator.  
				Since $h$ is a $\mathbb{Z}_4$-eigenfunction such that $g \cdot h = \sqrt{-1} h$, we can write 
				$\bar{h}= ax+bu+h_1$, where $a,b \in \mathbb{C}$ and 
				\[
				h_1 := \sum h_{ijkl} x^i y^j z^k u^l \in \mathcal{O}_{\mathbb{C}^4,0}
				\]
				satisfies that 
				\begin{equation}\label{eqn:weightineq}
				i+3j+2k+l \ge 5
				\end{equation}
				 when $h_{ijkl} \neq 0$. 
				
				Since $D$ has a non-Du Val singularity at $0 \in D$, 
				we can write $\bar{h}= x + \zeta_4 u +h_1$, where $\zeta_4 = \pm \sqrt{-1}$. 
				Otherwise we see that $D_V$ has a Du Val singularity of type $A_2$ 
				at $0$ and $D$ also has a Du Val singularity at $0$. 
				
				Hence we have 
				$D= \Delta \cap H \subset \mathbb{C}^4/ \mathbb{Z}_4(1,3,2,1)$, 
				where 
				\[
				\Delta:= (y^2 +z^3 + 2 \zeta_4 h_1 u + h_1^2=0)/ \mathbb{Z}_4,
				\]
				\[
				H:= (x + \zeta_4 u + h_1=0) / \mathbb{Z}_4. 
				\]
				Let $\Delta_1 \subset W_1$ be the strict transform of $\Delta$. 
				Then we have 
				\begin{equation}\label{eqn:Deltapullback}
				\nu_1^* \Delta = \Delta_1 + m_1 F_1,
				\end{equation}
				where 
				\[
				m_1 := \min \left\{ \frac{3}{2}, \left\{ \frac{1 + i +3j +2k +l}{4} \mid h_{ijkl} \neq 0 \right\} \right\}. 	
				\]
				We see that $m_1 \ge \frac{3}{2}$ by (\ref{eqn:weightineq}). 
				
				Let $H_1 \subset W_1$ be the strict transform of $H$. 
			Let $\nu_H:= \nu_1|_{H_1} \colon H_1 \rightarrow H$ be the induced birational morphism 
			and $E_H:= \nu_H^{-1}(0)$ be the exceptional divisor.
				By restricting (\ref{eqn:Deltapullback}) to $H_1$, we obtain 
				\[
				\nu_H^* D = D_1 + m_2 E_H 
				\]
			for some $m_2 \ge m_1$. 
			Since we have $K_{H_1} = \nu_H^* K_H + \frac{1}{2}E_H$, we obtain 
			\[
			K_{H_1} +D_1 = \nu_H^* (K_H +D) + (\frac{1}{2}- m_2) E_H.  
			\]
			By restricting this to $D_1$, we obtain 
			\begin{equation}\label{eq:ramifD1}
			K_{D_1} = \nu_D^* K_D + (\frac{1}{2} - m_2)E_D, 
			\end{equation}
			where $E_D:= E_H \cap D_1$ is the exceptional divisor of $\mu_D = \nu_H|_{D_1} \colon D_1 \rightarrow D$.  
			Note that we also have  $\mu_D = \mu_1|_{D_1}$. 
			Since $1/2 - m_2 \le -1$, we obtain the claim as follows; 
			since we have 
			\begin{equation}\label{eq:ramifU1D1}
			K_{U_1} +D_1 = \mu_1^*(K_U +D) + \sum_{j=1}^2 a(E_{1,j}, U, D) E_{1,j} 
			\end{equation}
			and $K_{U_1} +D_1$ is a $\mathbb{Q}$-Cartier divisor, 
			we see that $a(E_{1,1}, U, D) = a(E_{1,2}, U, D)$. 
			Otherwise $\sum_{j=1}^2 a(E_{1,j}, U, D) E_{1,j}$ is not $\mathbb{Q}$-Cartier at the ordinary double points.   
			Moreover we see that $a(E_{1,j}, U, D) \in \mathbb{Z}$ since 
			$K_U +D$ is a Cartier divisor. 
			By restricting (\ref{eq:ramifU1D1}) to $D_1$ and comparing that with (\ref{eq:ramifD1}), 
			we see that $a(E_{1,j}, U, D) \le -1$.

		\end{proof}

\subsection{Second blow-up}\label{secondblowupsect}

Let $U_1 \rightarrow U$ be either one of the weighted blow-ups constructed in Section \ref{firstblowupsection}. 
We use the same notation as Section \ref{firstblowupsection}.

We construct a useful resolution $U_2 \rightarrow U_1$ of $(U_1, D_1 +E_1)$ as follows.

\begin{lem}\label{U_2lem}
	Let $\mu_1 \colon U_1 \rightarrow U, D_1$ and $E_1$ be those as in Section \ref{firstblowupsection}. 
	
	Then there exists a projective birational morphism $\mu_{12} \colon U_2 \rightarrow U_1$ and 
	a $0$-dimensional subset $Z \subset U_1$ with the following properties;
	\begin{enumerate}
		\item[(i)] $U_2$ is smooth and $\mu_{12}^{-1}(D_1 \cup E_1)$ has SNC support. 
		\item[(ii)] $\mu_{12}$ is an isomorphism over $U_1 \setminus ( (D_1 \cap E_1) \cup \Sing U_1)$. 
		\item[(iii)] $\mu_{12}' \colon U_2' := \mu_{12}^{-1}(U_1 \setminus Z) \rightarrow U_1':= U_1 \setminus Z$ can be written as 
		a composition 
		\[
		\mu_{12}':  U_2' = V'_{k} \stackrel{f'_{k-1}}{\rightarrow} V'_{k-1} \rightarrow \cdots \rightarrow V'_2 \stackrel{f'_1}{\rightarrow} 
		V'_1 = U'_1, 
		\]
		where $f'_i \colon V'_{i+1} \rightarrow V'_i$ is an isomorphism or a blow-up of a smooth curve $Z'_i$ with either of the followings;  
		\begin{itemize}
		\item If the strict transform $\Delta'_i \subset V'_i$ of $\Delta_1':= D_1 \cap V_1' \subset V_1'$ is singular, we have $Z'_i \subset \Sing \Delta'_i$.  
		\item If $\Delta'_i$ is smooth, we have $Z'_i \subset \Delta'_i \cap F'_i$, where 
		 $F'_i:= (f'_{i,1})^{-1}(E_1')$ is the inverse image of $E_1':= E_1 \cap U_1'$ by   
		$f'_{i,1}:= f'_1 \circ \cdots \circ f'_{i-1} \colon V'_i \rightarrow V'_1$. 
		\end{itemize}
		\end{enumerate}
	
	As a consequence,  
	the discrepancies satisfy  
	\[
	a(E_{2,j}', U_1', D_1') \le 0 
	\] 
	for $D_1':= D_1 \cap U_1'$ and all $\mu_{12}'$-exceptional divisors $E_{2,j}' \subset U_2'$. 
	\end{lem}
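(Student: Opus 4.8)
The plan is to construct $\mu_{12}$ as a controlled embedded resolution of the divisor $D_1+E_1$ on $U_1$, blowing up only smooth curves over the complement of a finite set $Z$, and then to read the discrepancy bound off the shape of the centers.

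First I would record the geometry of $U_1$ near $E_1$. In both cases the singular locus $\Sing U_1$ is $0$-dimensional and contained in $E_1$: in the $A_{1,2}/4$ case this is the explicit list $\{p_1,p_2,q_2,q_3\}$ of Example \ref{exampleA12weighted}, while for $U=\mathbb{C}^3/\mathbb{Z}_r(1,a,r-a)$ a direct toric computation shows that each chart $U_{1,i}\cong \mathbb{C}^3/\mathbb{Z}_{a_i}(\dots)$ has all three weights coprime to $a_i$ (using $\gcd(a,r)=1$), so its cyclic quotient singularity is isolated. Since $\mu_1$ is an isomorphism over $U\setminus\{0\}$ and $D$ has an isolated singularity, the strict transform $D_1$ is smooth away from $E_1$, whence $\Sing D_1\subset C:=D_1\cap E_1$. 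I would then let $Z\subset U_1$ be the finite set consisting of $\Sing U_1$, the singular points of $E_1$ and of $C$, and the finitely many points of $C$ where the transverse singularity type of $D_1$ along $C$ jumps; over $U'_1:=U_1\setminus Z$ the variety $U'_1$, the divisor $E'_1$ (whose components meet transversally) and the curve $C'=C\cap U'_1$ are all smooth.

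Next I would resolve $D'_1+E'_1$ over $U'_1$ in two phases, producing the factorization in (iii). In \emph{Phase A}, as long as the strict transform $\Delta'_i$ of $\Delta'_1=D_1\cap V'_1$ is singular, its singular locus is a disjoint union of smooth curves over $U'_1$ (the type-jumping points having been discarded into $Z$); blowing up such a curve $Z'_i\subset\Sing\Delta'_i$ improves the generic transverse plane-curve singularity of $\Delta'_i$, and after finitely many such blow-ups $\Delta'_i$ becomes smooth. This realizes the first alternative of (iii). In \emph{Phase B}, with $\Delta'_i$ now smooth, I make $\Delta'_i+F'_i+E'_1$ normal crossing by blowing up the smooth curves contained in $\Delta'_i\cap F'_i$, the second alternative of (iii); this yields the smooth model $U'_2$ with $(\mu'_{12})^{-1}(D_1\cup E_1)$ of SNC support over $U'_1$. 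To globalize, I would blow up the closures of the centers $Z'_i$ in $U_1$ and, if the total space acquires singularities over the finite set $Z$, compose with a further embedded resolution supported over $Z$ (enlarging $Z$ if necessary, which does not affect the structure over $U'_1$). Since every center lies over $(D_1\cap E_1)\cup\Sing U_1$, the resulting $\mu_{12}\colon U_2\to U_1$ is smooth with SNC total transform and is an isomorphism over $U_1\setminus((D_1\cap E_1)\cup\Sing U_1)$, giving (i) and (ii).

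Finally, the discrepancy bound follows formally from (iii). Writing $g_i\colon V'_i\to U'_1$ for the partial composition, $F_i$ for the exceptional divisor of $f'_i$, $F_k^{(i)}$ for the strict transform of $F_k$ on $V'_i$, and $a_k:=a(F_k,U'_1,D'_1)$, the blow-up of the smooth curve $Z'_i$ in the smooth threefold $V'_i$ gives $K_{V'_{i+1}}=(f'_i)^*K_{V'_i}+F_i$, together with $(f'_i)^*\Delta'_i=\Delta'_{i+1}+m_i F_i$ where $m_i=\mult_{Z'_i}\Delta'_i$, and $(f'_i)^*F_k^{(i)}=F_k^{(i+1)}+e_{k,i}F_i$ where $e_{k,i}=\mult_{Z'_i}F_k^{(i)}\ge 0$. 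Substituting these into the relation $K_{V'_i}+\Delta'_i=g_i^*(K_{U'_1}+D'_1)+\sum_{k<i}a_k F_k^{(i)}$ yields $a_i=(1-m_i)+\sum_{k<i}a_k e_{k,i}$. In both cases of (iii) one has $Z'_i\subset\Delta'_i$, hence $m_i\ge 1$ (indeed $m_i\ge 2$ in the singular case), so an induction on $i$, with base case $a_1=1-m_1\le 0$, gives $a_i\le 0$ for every exceptional divisor $E'_{2,j}=F_i$. The main obstacle is Phase A: proving that the embedded resolution of the surface $D'_1$ can be carried out, away from a $0$-dimensional set, using only blow-ups of smooth curves lying in the successive singular loci $\Sing\Delta'_i$. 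This control on the centers is precisely what feeds the discrepancy estimate, and it requires analyzing the generic transverse singularity of $D_1$ along $C$ and checking that the blow-ups resolve it after finitely many steps while only finitely many points need be absorbed into $Z$.
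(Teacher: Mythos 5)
Your proposal follows essentially the same route as the paper: excise a finite bad set $Z$ (containing $\Sing U_1$ and the images of any zero-dimensional centers), resolve $(U_1, D_1+E_1)$ by a sequence of blow-ups whose centers over $U_1\setminus Z$ are smooth curves lying either in $\Sing\Delta'_i$ or in $\Delta'_i\cap F'_i$, and deduce $a(E'_{2,j},U'_1,D'_1)\le 0$ from $\mult_{Z'_i}\Delta'_i\ge 1$ (your inductive recursion is just an unwound form of the paper's telescoping sum). The one step you flag as the main obstacle --- the existence and termination of an embedded resolution using only such controlled centers away from a finite set --- is exactly what the paper does not prove by hand either: it first resolves $\Sing U_1$ by an auxiliary morphism $\nu_1$ making $\nu_1^{-1}(E_1)$ SNC, and then invokes \cite[Theorem A.1]{BierstoneMilmanMinimal1} to produce the sequence of blow-ups with centers satisfying your Phase A/Phase B conditions; citing that result closes the gap you identified.
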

	
	\begin{proof}
		By the construction of $\mu_1 \colon U_1 \rightarrow U$, we see that $U_1$ has only isolated cyclic quotient singularities and 
		ordinary double points. We also see that ${\rm Nsnc} (E_1) \subset \Sing U_1$, 
		where ${\rm Nsnc}(E_1)$ is the non-SNC locus of $E_1$. 
		Let $\nu_1 \colon V_1 \rightarrow U_1$ be a composition of blow-ups of smooth centers such that 
		$V_1$ is smooth, $F_1:= \nu_1^{-1}(E_1)$ is a SNC divisor, 
		and it induces an isomorphism $\nu_1^{-1}(U_1 \setminus \Sing U_1) \stackrel{\sim}{\rightarrow} U_1 \setminus \Sing U_1$. 
		Let $\Delta_1 \subset V_1$ be the strict transform of $D_1$. 
		Then we see that the non-SNC locus of $\Delta_1 \cup F_1$ is contained in $\Delta_1 \cap F_1$. 
		We can construct a composition of smooth center blow-ups as 
		\[
		f_{k,1} \colon V_k \stackrel{f_{k-1}}{\rightarrow} V_{k-1} \rightarrow \cdots \rightarrow V_2 \stackrel{f_1}{\rightarrow} V_1,   
		\] 
   	 where $f_i \colon V_{i+1} \rightarrow V_i$ is a blow-up of a smooth center $Z_i \subset V_i$ such that, 
   	 for each $i$, 
   	 \begin{itemize}
   		 \item $\Delta_i \subset V_i$ is the strict transform of $\Delta_1$, 
   		 \item $f_{i,1} := f_{1} \circ \cdots \circ f_{i-1} \colon V_i \rightarrow V_1$ is a composition and 
		 $F_i:= f_{i,1}^{-1}(F_1) \subset V_i$, 
   		 \end{itemize} 
   		 then, for each $i$, 
   		 \begin{enumerate}
   			 \item[(i)'] $Z_i$ and $F_i$ intersect transversely, 
   			 \item[(ii)'] $Z_i \subset \Sing \Delta_i$ or $\Delta_i$ is smooth and $Z_i \subset \Delta_i \cap F_i$
   			 \item[(iii)']  $\Delta_k \cup F_k$ is a SNC divisor. 
   			 \end{enumerate}
		We can construct this resolution by \cite[Theorem A.1]{BierstoneMilmanMinimal1}, for example.
		
		Let $U_2:= V_k$, $\mu_{12}:= \nu_1 \circ f_{k,1}$ and 
		\[
		Z:= \nu_1 \left( \bigcup_{\dim f_{i,1}(Z_i) =0} f_{i,1}(Z_i) \right) \cup \Sing U_1. 
		\]
		We see that these satisfy the property (i) in the statement by the properties (i)' and (iii)'  of $f_{k,1}$. 
		We see the property (ii) since  the morphism $\nu_1$ is an isomorphism 
		outside $\Sing U_1$ and $Z_i$ is contained in the inverse image of $D_1 \cap E_1$ by the condition (ii)'. 
		We see the property (iii) by the property (ii)' of $f_{k,1}$. 
	
	We can check the inequality $a(E_{2,j}', U_1', D_1') \le 0$ as follows; 
	   For $i \ge j$, let $f'_{i,j}:= f'_{j} \circ \cdots \circ f'_{i-1} : V'_i \rightarrow V'_j $ be the composition. 
	    We have an equality 
	   \begin{multline}
	   \sum_j a(E_{2,j}', U_1', D_1') E_{2,j}' = K_{V_k'} +\Delta'_k - (\mu'_{12})^* (K_{V_1'} +\Delta'_1)  \\ 
	   = \sum_{i=1}^{k-1} (f'_{k, i+1})^* (K_{V_{i+1}'} +\Delta'_{i+1} - (f'_{i})^* (K_{V'_i} + \Delta'_i)).  
	   \end{multline}
	  We also have $K_{V'_{i+1}} +\Delta'_{i+1} - (f'_i)^* (K_{V'_i} + \Delta'_i) = 
	  (1- \mult_{Z'_{i}} (\Delta'_i)) (f'_{i})^{-1}(Z'_{i})$ and 
	 $1- \mult_{Z'_{i}} (\Delta'_i) \le 0$.  
	Thus  we see that $a(E_{2,j}', U_1', D_1') \le 0$ for each $j$. 
	   
	  We finish the proof of Lemma \ref{U_2lem}.  
		\end{proof}

\subsection{The image of the blow-down morphism}		
		
		Let $U$ and $D \in |{-}K_U|$ with an isolated singularity at $0 \in D$ be as in Section \ref{firstblowupsection}. 
		We study the image of the blow-down morphism $(\mu_1)_* \colon T^1_{(U_1, D_1 +E_1)} \rightarrow T^1_{(U,D)}$ as in Example \ref{eg:blowdown}. 
		 
	 Let $\pi \colon V \rightarrow U$ be the index one cover and $\Delta:= \pi^{-1}(D)$. 
	 We can assume that $\Delta = (f=0) \subset V$.   
		By Proposition \ref{T^1invprop}, we have 
		\[
		T^1_{(U,D)} \simeq (T^1_{(V,\Delta)})^{\mathbb{Z}_r} 
		\]
		and  regard $T^1_{(U,D)} \subset T^1_{(V, \Delta)}$. 
		Let 
		\begin{equation}\label{m2definition}
		\mathfrak{m}^2 T^1_{(U,D)} := \mathfrak{m}_{V,0}^2 T^1_{(V,\Delta)} \cap T^1_{(U,D)}
		\end{equation} 
		be the set of deformations induced by functions with multiplicity 2 or more.
		
		\vspace{5mm}
		
		\noindent(I) First consider the case where $0 \in U$ is a quotient singularity.  
		Since $V$ is smooth, we also have $T^1_{(V,\Delta)} \simeq T^1_{\Delta} \simeq \mathcal{O}_{V,0}/J_{f,0}$ 
		for the Jacobian ideal $J_{f,0} \subset \mathcal{O}_{V,0}$. 
		Thus $T^1_{(V,\Delta)}$ has a $\mathcal{O}_{V,0}$-module structure and we fix an $\mathcal{O}_{V,0}$-module 
		homomorphism 
		\[
		\varepsilon \colon \mathcal{O}_{V,0} \rightarrow T^1_{(V,\Delta)} 
		\]
		such that, for $g \in \mathcal{O}_{V,0}$, an element $\varepsilon(g) \in T^1_{(V,\Delta)}$ 
		is a deformation $(f +tg=0) \subset V \times \Spec \mathbb{C}[t]/(t^2)$ of $V$.

Since $D$ has an isolated singularity at $0 \in U$, we obtain $T^1_{(U,D)} \simeq T^1_{(U', D')}$ by Lemma \ref{codim3pair}, 
where $U':= U \setminus 0$ and $D' := D \cap U$. 

Let $\mu_1 \colon U_1 \rightarrow U$ be one of the weighted blow-ups constructed in Section \ref{firstblowupsection}. 
We can define the {\it blow-down morphism} $(\mu_1)_* \colon T^1_{(U_1, D_1+E_1)} \rightarrow T^1_{(U,D)}$ as a composition 
\begin{equation}\label{mu1*defn}
(\mu_1)_* \colon T^1_{(U_1, D_1 +E_1)} \stackrel{\iota_1^*}{\rightarrow} T^1_{(U',D')} \stackrel{\simeq}{\rightarrow} T^1_{(U,D)},  
\end{equation}
where $\iota_1^*$ is the restriction by an open immersion $\iota_1 \colon U' \simeq U_1 \setminus E_1 \hookrightarrow U_1$. 
This homomorphism $(\mu_1)_*$ is same as the homomorphism introduced in Proposition \ref{pairblowdownprop}.

We have a relation on $\Image (\mu_1)_* \subset T^1_{(U,D)}$ as follows. 
		
\begin{lem}\label{blowdownlem}
	Let $U:= \mathbb{C}^3/ \mathbb{Z}_r(1,a,r-a)$ for some coprime positive integers $r$ and $a$.
Let $\mu_1 \colon U_1 \rightarrow U, D_1$ and $E_1$ be as in Section \ref{firstblowupsection}.  

Then we have the following.  
\begin{enumerate}
	\item[(i)] $T^1_{U_1} =0$. 
\item[(ii)] $\Image (\mu_1)_* \subset \mathfrak{m}^2  T^1_{(U,D)}$. 
\end{enumerate}
\end{lem}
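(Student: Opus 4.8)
The plan is to prove the two assertions of Lemma~\ref{blowdownlem} in turn, using the explicit toric description of the $1/r(1,a,r-a)$-weighted blow-up from Section~\ref{weightedblupsection} together with the discrepancy estimate of Lemma~\ref{nonGorwtlem}. For (i), I would argue that $U_1$ has only isolated cyclic quotient singularities (as recorded in the proof of Lemma~\ref{U_2lem}), that these are rigid, and that the exceptional divisor $E_1$ together with the global geometry of the toric variety $U_1$ forces $T^1_{U_1}=0$. Concretely, $U_1$ is a projective toric morphism over the affine $U$, so I would pass to the smooth locus $U_1^{\sm}$ via an open immersion $\iota\colon U_1^{\sm}\hookrightarrow U_1$ and identify $T^1_{U_1}$ with a cohomology group of the form $H^1(U_1,\iota_*\Theta_{U_1^{\sm}})$, then invoke Lemma~\ref{QFujinotoric} (applied with the dual/tangent sheaf, or after dualizing an appropriate $\Omega^j$) to kill the relevant $H^1$. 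The point is that $U_1\to U$ is a projective toric morphism over an affine base and the relevant sheaf is built from toric differentials, which is exactly the situation Lemma~\ref{QFujinotoric} is designed to handle.

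**The main step: the containment in $\mathfrak{m}^2$.**

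The heart of the lemma is (ii). First I would unwind the definition of $(\mu_1)_*$ in \eqref{mu1*defn}: an element of $T^1_{(U_1,D_1+E_1)}$ restricts to a first-order deformation of $(U',D')$ and then extends uniquely to $(U,D)$ by Lemma~\ref{codim3pair}, since $D$ has an isolated singularity at $0$. The key is that any deformation in the image keeps $E_1$ as a Cartier divisor and deforms $D_1$ as the \emph{strict} transform, so the corresponding deformation of $\Delta=(f=0)$ downstairs is constrained by the $v_1$-weight bookkeeping of \eqref{stricttrfwtblup}. Passing to the index-one cover $\pi\colon V\to U$ and identifying $T^1_{(U,D)}\simeq (T^1_{(V,\Delta)})^{\mathbb{Z}_r}\simeq(\mathcal{O}_{V,0}/J_{f,0})^{\mathbb{Z}_r}$ via $\varepsilon$, an element of $\Image(\mu_1)_*$ is represented by $\varepsilon(g)$ for some $g$. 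I would then show that the requirement that the deformation extend across $E_1$ as a genuine log-deformation of the pair $(U_1,D_1+E_1)$ forces $\wt_{v_1}(g)\ge m_D \ge 2$, where $m_D=\min\{i+aj+(r-a)k\mid f_{i,j,k}\neq 0\}$ as in the proof of Lemma~\ref{nonGorwtlem}. The computation at \eqref{discrepancywtblup}--\eqref{stricttrfwtblup} in the chart $U_{1,i}$, using that $\mu_1$ pulls back monomials with positive $v_1$-weight, shows any monomial appearing in such a $g$ has $v_1$-weight at least $m_D$; since each coordinate $x,y,z$ has $v_1$-weight $1/r, a/r, (r-a)/r$, the $\mathbb{Z}_r$-eigenvalue constraint combined with $\wt_{v_1}(g)\ge 2$ forces $\mult_0 g\ge 2$, i.e. $g\in\mathfrak{m}_{V,0}^2$. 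Hence $\varepsilon(g)\in\mathfrak{m}^2 T^1_{(U,D)}$ by definition \eqref{m2definition}.

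**Where the difficulty lies.**

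The main obstacle is making the weight-to-multiplicity implication fully rigorous: a priori $\wt_{v_1}(g)\ge 2$ need not give $\mult_0 g\ge 2$ for an arbitrary weighting, since the smallest weight $1/r$ could in principle allow a degree-one monomial $x$ to have weight below $2$. So the real content is to verify, using the discrepancy inequality $a(E_1,U,D)\le -1$ from Lemma~\ref{nonGorwtlem} (equivalently $m_D\ge 2$, in fact $\tfrac{1}{r}(1-m_D)$ a negative integer so $m_D\ge r+1$), that the weight threshold forced on $g$ is actually large enough to exclude all linear terms after accounting for the $\mathbb{Z}_r$-semi-invariance. I expect the clean route is: a first-order deformation in $\Image(\mu_1)_*$ corresponds to a section of the normal sheaf that extends over $E_1$, and the extension condition is exactly $\wt_{v_1}(g)>\wt_{v_1}(f)=m_D/r$ in the normalization used; combined with $m_D\ge r+1$ this yields that every monomial of $g$ has $v_1$-weight exceeding $1$, and since $\max_i (a_i/r)<1$ for the linear monomials $x,y,z$, no linear monomial can occur, giving $g\in\mathfrak{m}^2$. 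I would double-check the inequality in each of the three affine charts $U_{1,i}$ separately, as the chart changes rescale the coordinates by fractional powers of $x_i$ and this is where a sign or exponent slip is most likely.
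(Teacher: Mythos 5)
Your proposal is correct and follows essentially the same route as the paper: part (i) is the identical reduction to Lemma \ref{QFujinotoric} via the $\mu_1$-ampleness of $-K_{U_1}$, and part (ii) rests on the same two facts, namely that a class in $\Image(\mu_1)_*$ is induced by a function $g$ with $\wt_{v_1}(g)\ge\wt_{v_1}(f)=m_D/r$ (the paper gets this by extending $\eta_1$ to a one-parameter family using $H^1(\mathcal{O}_{U_1})=0$, $H^0(\mathcal{N}_{E_1/U_1})=0$ and part (i), and then invoking constancy of the coefficient $m_{1,t}$ of $E_1$ in $\mu_1^*\mathcal{D}_t$, while you read the same inequality off at first order), together with $m_D\ge r+1$ from Lemma \ref{nonGorwtlem}, so that constant and linear monomials, all of $v_1$-weight at most $(r-1)/r<1$, cannot occur in $g$. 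The normalization slip you flag yourself ($\wt_{v_1}(g)\ge m_D$ versus $\ge m_D/r$) is resolved exactly as you suggest, so there is no essential gap.
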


\begin{proof}
	(i) Since $U_1$ has only isolated quotient singularities, we have an isomorphism 
	\[
	T^1_{U_1} \simeq H^1(U_1, \Theta_{U_1}) \simeq H^1(U_1, (\iota_1)_*(\Omega^2_{U'_1}(-K_{U'_1})), 
	\]
	where $\iota_1 \colon U'_1 \hookrightarrow U_1$ is the smooth part. 
	Since $-K_{U_1}$ is $\mu_1$-ample in each case, we see that $H^1(U_1, (\iota_1)_*(\Omega^2_{U'_1}(-K_{U'_1}))=0$ by Lemma \ref{QFujinotoric}. 
	Thus we finish the proof of (i).

	(ii) Take $\eta_1 \in T^1_{(U_1,D_1 +E_1)}$. 
	We have an exact sequence 
	\begin{equation}\label{divisorcomesequence}
	H^0(U_1, \mathcal{O}_{U_1}(D_1)) \rightarrow H^0(D_1, \mathcal{N}_{D_1/U_1}) \rightarrow H^1(U_1, \mathcal{O}_{U_1})=0. 
	\end{equation}
	Hence the deformation of $D_1$ induced by $\eta_1$ comes from some member of the linear system $|D_1|$. 
	In particular, it can be extended to a deformation over a unit disc $\Delta^1$. 
	We also obtain $H^0(E_1, \mathcal{N}_{E_1/ U_1})=0$ since $-E_1$ is $\mu_1$-ample. 
	Hence $\eta_1$ induces a trivial deformation of $E_1$ over a unit disc. 
	
	By these arguments and (i), the first order deformation $\eta_1$ can be extended to 
	a deformation $(\mathcal{U}_1, \mathcal{D}_1 + \mathcal{E}_1) \rightarrow \Delta^1$ of $(U_1, D_1 +E_1)$
	over a unit disc $\Delta^1$ such that $\mathcal{U}_1 \simeq U_1 \times \Delta^1$. 
	   By taking its image by $\mu_1 \times \id \colon U_1 \times \Delta^1 \rightarrow U \times \Delta^1$, 
	   we obtain a deformation $(\mathcal{U}, \mathcal{D}) \rightarrow \Delta^1$ of $(U,D)$. 
	  	
	   Let $m_1$ be a rational number such that 
	   \[
	   (\mu_1 \times \id)^* (r \mathcal{D}) = r \mathcal{D}_1 + rm_1 \mathcal{E}_1.
	   \]
	   For $t \in \Delta^1$, let $\mathcal{D}_t, \mathcal{D}_{1,t}$ be the fibers of $\mathcal{D}, \mathcal{D}_1$ over $t$ and
	     $m_{1,t}$  a rational number such that 
	    $\mu_1^* \mathcal{D}_t = \mathcal{D}_{1,t} + m_{1,t} E_1$. 
	   The above relations imply that $m_{1,t}$ is invariant for all $t \in \Delta^1$. 
	   
	 Suppose that there exists $\eta_1 \in T^1_{(U_1, D_1 +E_1)}$ such that 
	 \begin{equation}\label{m^2hairikatei}
	 (\mu_1)_* (\eta_1) \in T^1_{(U,D)} \setminus \mathfrak{m}^2 T^1_{(U,D)}.
	 \end{equation} We use the inclusion $T^1_{(U,D)} \subset T^1_{(V,\Delta)}$ as above. 
	  Take $h_1 \in \mathcal{O}_{V,0}$ such that $\varepsilon(h_1) = (\mu_1)_* (\eta_1)$. By the condition (\ref{m^2hairikatei}), 
	  we obtain $\mult_0 h_1 \le 1$.
	 Then we see that $m_{1,t} \le (1/r) \max \{1,a, r-a \}$ for $t \neq 0$ by the formula (\ref{stricttrfwtblup}). 
	 However we see that $m_{1,0} \ge 1 + 1/r$ by the calculation in the proof of Lemma \ref{nonGorwtlem}.  
	 This is a contradiction. 
	  
	 Hence we finish the proof of (ii). 
	\end{proof}

\vspace{5mm}

\noindent(II) Next we consider a neighborhood of an $A_{1,2}/4$-singularity.  
Let $U:= (x^2+y^2+z^3+u^2=0)/ \mathbb{Z}_4(1,3,2,1)$ and 
$\mu_1 \colon U_1 \rightarrow U$ the weighted blow-up as in Lemma \ref{A12-wtdblowup}. 
	Let $W:= \mathbb{C}^4/\mathbb{Z}_4(1,3,2,1)$ and $\nu_1 \colon W_1 \rightarrow W$ the weighted blow-up 
	as in Lemma \ref{A12-wtdblowup}. 
We first want to show that deformations of $U_1$ comes from embedded deformations of $U_1 \subset W_1$.  

	Let $\mathcal{I}_{U_1} \subset \mathcal{O}_{W_1}$ be the ideal sheaf of the closed subscheme $U_1 \subset W_1$. 
	Let $U_1' \subset U_1$ and $W'_1$ be the smooth parts of $U_1$ and $W_1$ respectively. 
	Note that $U_1'= W_1' \cap U_1$. 
Let $\mathcal{I}_{U'_1} \subset \mathcal{O}_{W_1'}$ be the ideal sheaf of $U'_1 \subset W_1'$. 
	We have an exact sequence 
	\[
	0 \rightarrow \mathcal{I}_{U'_1}/ \mathcal{I}_{U'_1}^2 \rightarrow \Omega^1_{W'_1}|_{U'_1} \rightarrow 
	\Omega^1_{U'_1} \rightarrow 0.
	\]

By taking the push-forward of the above sequence by the open immersion $\iota_1 \colon U'_1 \hookrightarrow U_1$, 
we obtain an exact sequence 
\[
	0 \rightarrow  (\mathcal{I}_{U_1}/ \mathcal{I}_{U_1}^2)^{**} \rightarrow  (\Omega^1_{W_1}|_{U_1})^{**} \rightarrow 
	 (\Omega^1_{U_1})^{**} \rightarrow 0,     
\]
where ${}^{**}$ is the double dual. 
The surjectivity follows since $(\iota_1)_* \mathcal{I}_{U'_1}/ \mathcal{I}_{U'_1}^2$ is a Cohen-Macaulay sheaf and 
it implies $R^1 (\iota_1)_* \mathcal{I}_{U'_1}/ \mathcal{I}_{U'_1}^2 =0$.

This induces an exact sequence 
\begin{equation}\label{exactembedabstdeform}
H^0(U_1, \mathcal{N}_{U_1/ W_1}) \stackrel{e_1}{\rightarrow} \Ext^1((\Omega^1_{U_1})^{**}, \mathcal{O}_{U_1}) \rightarrow 
\Ext^1((\Omega^1_{W_1}|_{U_1})^{**}, \mathcal{O}_{U_1}).  
\end{equation}
By Lemmas  \ref{codim3Omega1**} and \ref{codim3pair}, we obtain  
\[
\Ext^1((\Omega^1_{U_1})^{**}, \mathcal{O}_{U_1}) \simeq \Ext^1(\Omega^1_{U'_1}, \mathcal{O}_{U'_1}) \simeq T^1_{U_1}. 
\]
Thus the homomorphism $e_1$ in (\ref{exactembedabstdeform}) sends an embedded deformation of $U_1 \subset W_1$ to 
the corresponding deformation of $U_1$. 

We have the following proposition on the surjectivity of $e_1$. 

\begin{lem}\label{surjembabs}
	Let $U, U_1, W, W_1$ be as above. 
	Then we have 
	\[
	\Ext^1((\Omega^1_{W_1}|_{U_1})^{**}, \mathcal{O}_{U_1}) =0. 
	\]
	In particular, we see that $e_1$ is surjective by the sequence (\ref{exactembedabstdeform}). 
	\end{lem}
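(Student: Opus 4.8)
The plan is to reduce the computation of this global $\Ext$ group to a cohomology group on the ambient toric fourfold $W_1$, where Fujino's vanishing (Lemma \ref{QFujinotoric}) is available, and then transport the resulting vanishing back to $U_1$ across the two singular loci $\Sing W_1$ and $\Sing U_1$. Throughout I write $\mathcal{B}:=(\Omega^1_{W_1}|_{U_1})^{**}$ and $\mathcal{B}^\vee:=\Hom(\mathcal{B},\mathcal{O}_{U_1})$, and I use that $\mathcal{O}_{U_1}$ is Cohen--Macaulay of depth $3$ (as $U_1$ is a hypersurface in the Cohen--Macaulay toric variety $W_1$).

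First I would rewrite the object via its dual. On the smooth locus $U'_1=U_1\cap W'_1$ the sheaf $\mathcal{B}$ is locally free and equals $\Omega^1_{W'_1}|_{U'_1}$, so $\mathcal{B}^\vee|_{U'_1}=\Theta_{W'_1}|_{U'_1}$, the restriction of the ambient tangent bundle. Since $\mathcal{B}$ is reflexive and $\Sing U_1$ is a finite set with $\depth\mathcal{O}_{U_1}\ge 3$ there, Lemma \ref{codim3Omega1**} gives $\Ext^1(\mathcal{B},\mathcal{O}_{U_1})\simeq\Ext^1(\Omega^1_{W'_1}|_{U'_1},\mathcal{O}_{U'_1})=H^1(U'_1,\Theta_{W'_1}|_{U'_1})$. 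Thus it suffices to prove $H^1(U'_1,\Theta_{W'_1}|_{U'_1})=0$.

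Next I would push this onto $W_1$ and invoke Fujino. Since $\dim W'_1=4$ we have $\Theta_{W'_1}\simeq\Omega^3_{W'_1}(-K_{W'_1})$, so applying $j_*$ to the restriction sequence $0\to\Theta_{W'_1}(-U'_1)\to\Theta_{W'_1}\to\Theta_{W'_1}|_{U'_1}\to 0$ along $j\colon W'_1\hookrightarrow W_1$, and checking $R^1j_*\Theta_{W'_1}(-U'_1)=0$ (the sheaf is locally free and $\Sing W_1$ has codimension $\ge 3$, being a line of $\frac12(1,1,1)$-points together with one $\frac13$-point as in Example \ref{exampleA12weighted}), yields a short exact sequence of reflexive sheaves
\[
0\to\Theta_{W_1}(-U_1)\to\Theta_{W_1}\to j_*(\Theta_{W'_1}|_{U'_1})\to 0,
\]
with $\Theta_{W_1}=j_*\Omega^3_{W'_1}(-K_{W'_1})$ and $\Theta_{W_1}(-U_1)=j_*\Omega^3_{W'_1}(-K_{W'_1}-U'_1)$. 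Using $K_{W_1}=\nu_1^*K_W+\tfrac34 F_1$ from (\ref{discrepancywtblup}), the weight $\wt_{v_1}(x^2+y^2+z^3+u^2)=\tfrac12$ together with (\ref{stricttrfwtblup}), and $K_W\sim 0$, $\nu_1^*U\sim_{\nu_1}0$, one computes $-K_{W_1}\sim_{\nu_1}-\tfrac34 F_1$ and $-K_{W_1}-U_1\sim_{\nu_1}-\tfrac14 F_1$, both $\nu_1$-ample $\mathbb{Q}$-Cartier divisors. Lemma \ref{QFujinotoric} applied to $\nu_1\colon W_1\to W$ then gives $H^1(W_1,\Theta_{W_1})=0$ and $H^2(W_1,\Theta_{W_1}(-U_1))=0$, so the long exact sequence forces $H^1(W_1,j_*(\Theta_{W'_1}|_{U'_1}))=0$; since this sheaf is the pushforward of $\mathcal{B}^\vee$ under the closed immersion $U_1\hookrightarrow W_1$, this says $H^1(U_1,\mathcal{B}^\vee)=0$.

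Finally I would bridge $H^1(U_1,\mathcal{B}^\vee)$ and $H^1(U'_1,\Theta_{W'_1}|_{U'_1})$ across $\Sing U_1=\{p_1,p_2,q_2,q_3\}$ by showing $\mathcal{B}^\vee$ is maximal Cohen--Macaulay there: at the ordinary double points $p_1,p_2$ the ambient $W_1$ is smooth, so $\mathcal{B}^\vee=\Theta_{W_1}|_{U_1}$ is locally free on $U_1$, while at the quotient points $q_2,q_3$ the reflexive sheaf $\mathcal{B}^\vee$ is maximal Cohen--Macaulay because reflexive modules over a quotient singularity are. Hence $\depth\mathcal{B}^\vee=3$ along $\Sing U_1$, so $R^1\iota_{1*}(\mathcal{B}^\vee|_{U'_1})=0$ and $H^1(U'_1,\Theta_{W'_1}|_{U'_1})\simeq H^1(U_1,\mathcal{B}^\vee)=0$; the surjectivity of $e_1$ then follows from (\ref{exactembedabstdeform}). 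The step I expect to be the main obstacle is precisely this simultaneous control of the two singular loci: keeping the restriction sequence exact after $j_*$ and arranging that the twist landing in Fujino's theorem is genuinely $\nu_1$-ample depends on the discrepancy/weight bookkeeping of Lemma \ref{A12-wtdblowup}, while the final transport back to $U'_1$ rests on the maximal Cohen--Macaulayness of $\mathcal{B}^\vee$ at the mixed (quotient and double point) singularities of $U_1$.
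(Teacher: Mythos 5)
Your route has the same skeleton as the paper's: the heart of both arguments is the exact sequence $0\to \iota_*\Theta_{W'_1}(-U'_1)\to\Theta_{W_1}\to\iota_*(\Theta_{W'_1}|_{U'_1})\to 0$ on $W_1$ together with the two applications of Lemma \ref{QFujinotoric} to the $\nu_1$-ample classes $-\tfrac{3}{4}F_1$ and $-\tfrac{1}{4}F_1$, and your discrepancy bookkeeping there is correct. You reorganize the surrounding reduction, replacing the paper's local-to-global spectral sequence and its local $\underline{\Ext}^1$ computation at $q_2,q_3$ by a single application of Lemma \ref{codim3Omega1**} plus a depth argument at $\Sing U_1$; that reorganization is legitimate in principle, but the two depth claims on which it rests are justified by false general principles, and these are exactly the points where the paper has to do real work.

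First, you assert $R^1j_*\Theta_{W'_1}(-U'_1)=0$ because ``the sheaf is locally free and $\Sing W_1$ has codimension $\ge 3$.'' Codimension alone does not control $R^1j_*$: one has $R^1j_*\mathcal{G}\simeq \underline{H}^2_{\Sing W_1}(j_*\mathcal{G})$, which vanishes if and only if $\depth_z (j_*\mathcal{G})_z\ge 3$ for every point $z$ of $\Sing W_1$, and the reflexive extension $j_*\mathcal{G}$ is only guaranteed depth $\ge 2$. The purity argument you are implicitly invoking requires $\mathcal{G}$ to extend to a locally free (or Cohen--Macaulay) sheaf across $\Sing W_1$, which $\Theta_{W'_1}(-U'_1)$ does not, since $\Theta_{W_1}$ is not locally free along the singular locus. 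So you must prove that $j_*\Theta_{W'_1}(-U'_1)$ is Cohen--Macaulay; the paper does this chart by chart on $D_+(y)$ and $D_+(z)$ by exhibiting it as the $\mathbb{Z}_3$- (resp.\ $\mathbb{Z}_2$-) invariant part of a free module on $\mathbb{C}^4$. Second, your claim that $\mathcal{B}^\vee$ is maximal Cohen--Macaulay at $q_2,q_3$ ``because reflexive modules over a quotient singularity are'' is false in dimension $3$: reflexivity gives depth $\ge 2$ only, and already over the regular local ring $\mathbb{C}\{x,y,z\}$ the second syzygy of the residue field is reflexive of depth $2$ and not free. The equivalence reflexive $=$ MCM is a two-dimensional phenomenon. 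The fact you need is nevertheless true, and for the same special reason as before: near $q_i$ the sheaf $(\Omega^1_{W_1}|_{U_1})^*$ is the invariant part of the restriction of the free module $\Theta_{\mathbb{C}^4}$ to the smooth index-one cover of $U_1$, hence Cohen--Macaulay. With these two justifications repaired, your argument closes up and is essentially the paper's proof.
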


\begin{proof}
	The local-to-global spectral sequence induces an exact sequence 
	\[
	0 \rightarrow H^1(U_1, (\Omega^1_{W_1}|_{U_1})^*) 
	\rightarrow \Ext^1((\Omega^1_{W_1}|_{U_1})^{**},\mathcal{O}_{U_1} ) 
	\rightarrow H^0(U_1, \underline{\Ext}^1((\Omega^1_{W_1}|_{U_1})^{**},\mathcal{O}_{U_1}) ),  
	\]
	where $\underline{\Ext}^1$ is the sheaf of Ext groups. 
	Thus it is enough to show the second and fourth terms are zero. 
	
	First we show that $H^1(U_1, (\Omega^1_{W_1}|_{U_1})^*) =0$. 
	Let $\iota \colon W'_1 \hookrightarrow W_1$ be the open immersion. 
	We can see that the sheaf $\iota_* \Theta_{W_1'}(-U'_1)$ is Cohen-Macaulay as follows; 
	On $D_+(y) \simeq \mathbb{C}^4/\mathbb{Z}_3(2,1,1,2)$, let $\pi_y \colon \mathbb{C}^4 \rightarrow D_+(y)$ 
	be the quotient morphism. 
	We see that  $\iota_* \Theta_{W_1'}(-U'_1)|_{D_+(y)}$ is Cohen-Macaulay since it is the $\mathbb{Z}_3$-invariant part of the sheaf 
	$\Theta_{\mathbb{C}^4}\otimes \mathcal{O}_{\mathbb{C}^4}(- \pi_y^{-1}(U_1 \cap D_+(y)))$. 
	Similarly, on $D_+(z) \simeq \mathbb{C}^4/ \mathbb{Z}_2(1,1,0,1)$, 
	we see that $\iota_* \Theta_{W_1'}(-U'_1)|_{D_+(y)}$ is Cohen-Macaulay. 
	Since $D_+(x)$ and $D_+(u)$ are smooth, we see that $\iota_* \Theta_{W_1'}(-U'_1)$ is Cohen-Macaulay on $W_1$.  
	 
	By this Cohen-Macaulayness and Proposition \ref{openpushforwardexact}, we obtain an exact sequence 
	\[
	0 \rightarrow \iota_* \Theta_{W_1'}(-U'_1) \rightarrow  \Theta_{W_1} \rightarrow \iota_* (\Theta_{W'_1}|_{U'_1}) \rightarrow 0. 
	\]
	Thus, by $\iota_* (\Theta_{W'_1}|_{U'_1}) \simeq (\Omega^1_{W_1}|_{U_1})^*$, 
	 we obtain an exact sequence 
	\begin{equation}\label{cohomexact}
	H^1(W_1, \Theta_{W_1}) \rightarrow H^1(U_1, (\Omega^1_{W_1}|_{U_1})^*) \rightarrow H^2(W_1, \iota_* \Theta_{W_1'}(-U'_1)).  
	\end{equation}
	We see that 
	\[
	H^1(W_1, \Theta_{W_1}) \simeq H^1(W_1, \iota_* (\Omega^3_{W'_1}(-K_{W'_1})) =0
	\]
	by Lemma \ref{QFujinotoric} since $-K_{W_1} \equiv_{\nu_1} -3/4 F_1$ is $\nu_1$-ample. Here $\equiv_{\nu_1}$ means the numerical equivalence over $W$. 
	Similarly, we see that 
	\[
	H^2(W_1, \iota_* \Theta_{W_1'}(-U'_1)) \simeq H^2(W_1, \iota_* (\Omega^3_{W'_1}(-K_{W'_1}-U'_1))) =0 
	\]
	since $-K_{W_1}-U_1 \equiv_{\nu_1} -1/4 F_1$ is $\nu_1$-ample. 
	Thus we obtain $H^1(U_1, (\Omega^1_{W_1}|_{U_1})^*) =0$ by the exact sequence (\ref{cohomexact}).
	
	Next we shall show that $H^0(U_1, \underline{\Ext}^1((\Omega^1_{W_1}|_{U_1})^{**},\mathcal{O}_{U_1}) ) =0$. 
We can compute that $\Sing W_1 \cap U_1 = \{q_2, q_3 \}$ consists of two quotient singularities as described in Example \ref{exampleA12weighted}. 
Hence it is enough to check $\underline{\Ext}^1((\Omega^1_{W_1}|_{U_1})^{**},\mathcal{O}_{U_1})_{q_i} =0$ for $i=2,3$.   
We have an exact sequence 
\[
\underline{\Ext}^1((\Omega^1_{U_1})^{**} ,\mathcal{O}_{U_1})_{q_i}  
 \rightarrow 
\underline{\Ext}^1((\Omega^1_{W_1}|_{U_1})^{**},\mathcal{O}_{U_1})_{q_i} \rightarrow 
\underline{\Ext}^1((\mathcal{I}_{U_1}/ \mathcal{I}_{U_1}^2)^{**},\mathcal{O}_{U_1})_{q_i} 
\] 
for $i =2,3$.  
By Lemma \ref{codim3Omega1**}, we obtain 
\[
\underline{\Ext}^1((\mathcal{I}_{U_1}/ \mathcal{I}_{U_1}^2)^{**},\mathcal{O}_{U_1})_{q_i} \simeq 
\iota_*(\underline{\Ext}^1(\mathcal{I}_{U'_1}/ \mathcal{I}_{U'_1}^2,\mathcal{O}_{U'_1}))_{q_i}
\simeq H^2_{q_i}(U_1, \mathcal{N}_{U_1/W_1}) =0
\]
since $\depth_{q_i} \mathcal{N}_{U_1/W_1} =3$. 
Hence we obtain $H^0(U_1, \underline{\Ext}^1((\Omega^1_{W_1}|_{U_1})^{**},\mathcal{O}_{U_1}) ) =0$. 

Thus we finish the proof of Lemma \ref{surjembabs}.

	\end{proof}

For a neighborhood $U$ of a $A_{1, 2}/4$-singularity, we also have the blow-down morphism 
$
(\mu_1)_* \colon T^1_{(U_1, D_1 +E_1)} \rightarrow T^1_{(U,D)} 
$
as in (\ref{mu1*defn}). 
Let $\pr_U \colon T^1_{(U,D)} \rightarrow T^1_U$ and $\pr_{U_1} \colon T^1_{(U_1, D_1 +E_1)} \rightarrow T^1_{U_1}$  
be the forgetful homomorphisms. 
Then we have the following claim on $\Image (\mu_1)_*$. 

\begin{lem}\label{lem:A12prU}
	Under the above settings, we have $\pr_U (\Image (\mu_1)_*) =0 \subset T^1_U$. 
	\end{lem}

\begin{proof}
	Let $\eta_1 \in T^1_{(U_1, D_1 +E_1)}$ be a first order deformation of $(U_1, D_1 +E_1)$.

	We have an exact sequence 
	\[
	H^0(W_1, \mathcal{O}_{W_1}(U_1)) \rightarrow H^0(U_1, \mathcal{N}_{U_1/W_1}) \rightarrow H^1(W_1, \mathcal{O}_{W_1}).   
	\]
	Since $H^1(W_1, \mathcal{O}_{W_1})=0$, the embedded deformation of $U_1 \subset W_1$ comes from some member of the linear system $|\mathcal{O}_{W_1}(U_1)|$ 
	and can be extended over a unit disk $\Delta^1$. 
	By this and Lemma \ref{surjembabs}, we see that a deformation $\pr_{U_1}(\eta_1) \in T^1_{U_1}$
	 is induced by a deformation $\mathcal{U}_1 \subset W_1 \times \Delta^1$ 
	of the embedding $U_1 \subset W_1$. 
	Let 
	\[
	\mathcal{U}:= (\nu_1 \times \id)(\mathcal{U}_1) \subset W \times \Delta^1. 
	\] 
	Let $\mathcal{F}_1:= F_1 \times \Delta^1$, where $F_1 \subset W_1$ is the $\nu_1$-exceptional divisor. 
	We have a relation 
	\[
	(\nu_1 \times \id)^* \mathcal{U} = \mathcal{U}_1 + m_1 \mathcal{F}_1
	\]
	for some $m_1 \in \mathbb{Q}_{>0}$. 
	For $t \in \Delta^1$, this induces a relation on the fibers over $t$  
	\[
	\nu_1^* \mathcal{U}_t = \mathcal{U}_{1,t} + m_{1,t} F_1,  
	\]
	where $m_{1,t}=m_1$.
	
	If $\pr_U \left( (\mu_1)_*(\eta_1) \right) \neq 0 \in T^1_{U_1}$, we see that $\mathcal{U}$ is 
	a $\mathbb{Q}$-smoothing of $U$ since $T^1_{U} \simeq \mathbb{C}$ is generated by a 
	$\mathbb{Q}$-smoothing direction.   
	Thus we see that $m_{1,t} < m_{1,0}$ as in the proof of Lemma \ref{blowdownlem} (ii).  
	 This is a contradiction. Thus we finish the proof of Lemma \ref{lem:A12prU}.
	\end{proof}

\subsection{Kernel of the coboundary map}

Let $0 \in U$ be a neighborhood of a quotient singularity or an $A_{1,2}/4$-singularity and $D \in |{-}K_U|$ be a member with an isolated singularity. 
Let $U_1, U_2, D_2$ and so on be as in Lemma \ref{U_2lem}. 
Let $\mu_2:= \mu_1 \circ \mu_{12} \colon U_2 \rightarrow U$ and $E_2:= \mu_2^{-1}(0)$ the $\mu_2$-exceptional divisor. 

Since 
$U_2 \setminus E_2 \simeq U \setminus 0=:U'$, we have the coboundary map   
\begin{equation}\label{phiUdefn}
\phi_U \colon H^1(U',\Omega^2_{U'}(\log D')) \rightarrow H^2_{E_2}(U_2, \Omega^2_{U_2}(\log D_2 +E_2)),  
\end{equation}
where $D':= D \cap U'$. 
We fix an isomorphism $S_D \colon \mathcal{O}_U(-K_U -D) \simeq \mathcal{O}_U$ and it induces an isomorphism 
	 \begin{equation}\label{varphiSDdefn} 
	 \varphi_{S_D} \colon T^1_{(U,D)} \rightarrow H^1(U',\Theta_{U'}(- \log D')) \rightarrow H^1(U',\Omega^2_{U'}(\log D')). 
	 \end{equation}

We have the following lemma on the kernel of the above coboundary map. 

 \begin{lem}\label{lem:logkerm^2}
	Let $\phi_U$ be as in (\ref{phiUdefn}) and we use the same notations as above.
	
\begin{enumerate}
	\item[(i)] We have 
	\begin{equation}\label{eq:lemlogkerm^2}
	\Ker \phi_U \subset \varphi_{S_D}( \Image (\mu_1)_*  )
	\end{equation}
	 and thus, 
	by Lemma \ref{lem:A12prU}, we have  
	$\pr_U (\varphi_{S_D}^{-1}(\Ker \phi_U)) =0$. \\ 
	  In particular, we have $\phi_U \neq 0$.  
	\item[(ii)] Assume that $U$ has only quotient singularity. 
	 Then we have 
	 \[
	 \Ker \phi_U \subset \varphi_{S_D}(\mathfrak{m}^2 T^1_{(U, D)}).
	 \]   
	 \end{enumerate}
	 
	 \end{lem}
	
	\begin{proof}
	Let $E_{12} \subset U_2$ be the $\mu_{12}$-exceptional locus. 
	Let $\mu_{12} \colon U_2 \rightarrow U_1$ be the birational morphism as in Lemma 
\ref{U_2lem} and $Z \subset U_1$ be the finite points as in Lemma \ref{U_2lem}. 
	Let $U_1':= U_1 \setminus Z$,  	
	 $U_2':= \mu_{12}^{-1}(U_1')$ and $U''_1:= U_1 \setminus (\mu_{12}(E_{12}) \cup Z)$. 
	We have the following relation; 
\begin{equation}\label{openimmdiagram}
\xymatrix{
 & U_2' \ar@{^{(}->}[r]^{\iota_2} \ar[d]^{\mu'_{12}} & U_2 \ar[d]^{\mu_{12}} \\  
U_1'' \ar@{^{(}->}[ru]^{\iota_{12}} \ar@{^{(}->}[r] & U_1'  \ar@{^{(}->}[r] & U_1 \ar[d]^{\mu_1} \\ 
 & U' \ar@{^{(}->}[lu]^{\iota_1} \ar@{^{(}->}[r] & U.
}
\end{equation}	

	Set $D_j':= D_j \cap U_j', E_j':= E_j \cap U'_j$ for $j=1,2$.

	Let $G'_2$ be a divisor on $U_2'$ supported on $E_2'$ such that 
	\[
	\{ -(K_{U_2}+ D_2 +E_2) + \mu_{2}^*(K_{U}+D)\}|_{U'_2} \sim G'_2. 
	\]
	We see that $G_2'$ is effective since we have 
	\begin{multline}
	G_2' = -E_2' + \left\{ -(K_{U_2'} +D_2') + (\mu'_{12})^* (K_{U'_1}+D'_1) \right\} \\
	+ (\mu'_{12})^* \left\{ -(K_{U_1} +D_1) + \mu_1^* (K_U +D) \right\}|_{U'_1} 
	 \ge -E'_2 +0 + (\mu'_{12})^* E'_1 \ge 0
	\end{multline}
	by Lemmas \ref{nonGorwtlem}, \ref{A12-wtdblowup} in Section \ref{firstblowupsection} and Lemma \ref{U_2lem}. 
	Set $G''_1 := G'_2 \cap U''_1$. 
	Since we have an open immersion  
	\[
	\iota \colon U'= U \setminus 0 \simeq U_2 \setminus \mu_2^{-1}(0) \hookrightarrow U_2,  
	\]
		 we obtain the following commutative diagram; 
		\begin{equation}
			\xymatrix{
			H^1(U_2, \Omega^2_{U_2}(\log D_2 + E_2)) \ar[d]^{\iota_2^*} \ar[r]^{\iota^*} & 
			H^1(U', \Omega^2_{U'}(\log D')) \\
		H^1(U'_2, \Omega^2_{U'_2}(\log D'_2 + E'_2)) \ar[d]^{\phi_{G'_2}} &  \\
			H^1(U'_2,  \Omega^2_{U'_2}(\log D'_2 + E'_2)(G'_2)) \ar[r]^{\iota_{12}^*} &  
			H^1(U''_1,  \Omega^2_{U''_1}(\log D''_1 + E''_1)(G''_1)), \ar[uu]^{\iota_1^*}
			}
			\end{equation}
			where $\iota^*, \iota_1^*, \iota_2^*, \iota_{12}^*$ are the restrictions by open immersions 
			$\iota, \iota_1, 
			\iota_2, \iota_{12}$ as in the diagram (\ref{openimmdiagram}) and 
			$\phi_{G'_2}$ is induced by an injection $\mathcal{O}_{U_2'} \hookrightarrow \mathcal{O}_{U'_2}(G'_2)$.		Thus we have  
			\begin{equation}
			\Ker \phi_U = \Image \iota^* \subset \Image \iota_1^* \circ \iota_{12}^*.  
			\end{equation} 
			Now we shall prove the statement (i). 
			
	\noindent (i)	First, we prepare the diagram (\ref{iota1iota12diag}). 
		Since $U'_2$ is smooth and $D_2'+E_2'$ is a SNC divisor by the construction of $\mu_{12}$ in Lemma \ref{U_2lem}, we have a natural isomorphism 
		\[
		T^1_{(U_2', D_2' +E_2')} \simeq H^1(U_2', \Theta_{U_2'}(-\log D_2' +E_2')) \simeq H^1(U_2', \Omega^2_{U'_2}(\log D_2' +E_2')(-K_{U'_2}- D_2'- E_2')). 
		\]  
		The isomorphism $S_D \colon \mathcal{O}_U(-K_U -D) \simeq \mathcal{O}_U$ 
		induces an isomorphism $\mu_2^*(S_D) \colon \mathcal{O}_{U_2}(\mu_2^*(-K_U - D)) \simeq \mathcal{O}_{U_2}$ 
		and this induces an isomorphism 
		\[
		H^1(U_2', \Omega^2_{U'_2}(\log D_2' +E_2')(-K_{U'_2}- D_2'- E_2')) \simeq H^1(U'_2,  \Omega^2_{U'_2}(\log D'_2 + E'_2)(G'_2)). 
		\]
		Thus we have an isomorphism 
		\[
		\varphi_{\mu_2^*(S_D)} \colon T^1_{(U_2', D_2' +E_2')} \simeq H^1(U'_2,  \Omega^2_{U'_2}(\log D'_2 + E'_2)(G'_2)). 
		\]
		The homomorphisms $\varphi_{S_D}$ in (\ref{varphiSDdefn}), $\varphi_{\mu_2^*(S_D)}$ and $\iota_1^* \circ \iota_{12}^*$ fit in the commutative diagram 
		\begin{equation}\label{iota1iota12diag}
			\xymatrix{
			H^1(U'_2,  \Omega^2_{U'_2}(\log D'_2 + E'_2)(G'_2)) \ar[r]^{\ \ \ \ \ \ \iota_1^* \circ \iota_{12}^*}  
			& H^1(U', \Omega^2_{U'}(\log D'))  \\ 
			T^1_{(U_2', D_2' +E'_2)} \ar[r] \ar[d]^{(\mu'_{12})_*} \ar[u]_{\varphi_{\mu_2^*(S_D)}}^{\simeq} &
			 T^1_{(U',D')} \ar[u]^{\simeq} \\
			 T^1_{(U_1', D_1' +E_1')} \ar[d]^{\simeq} & T^1_{(U,D)} \ar[u]^{\simeq} \ar@/_2pc/[uu]_{\varphi_{S_D}} \\ 
			 T^1_{(U_1, D_1 +E_1)}, \ar[ru]^{(\mu_1)_*} &  
			}
			\end{equation}  
			where $(\mu'_{12})_*$ is the blow-down morphism as in Proposition \ref{pairblowdownprop}.

			By this, the above diagram and the previous relations, we see the relation
			\[
			\Ker \phi_U = \Image \iota^* \subset \Image \iota_1^* \circ \iota_{12}^* \subset \varphi_{S_D} (\Image (\mu_1)_*).
			\] 
			Thus we obtain the relation (\ref{eq:lemlogkerm^2}).  
			
\noindent(ii) Now assume that $U$ has only quotient singularity. 	
 By Lemma \ref{blowdownlem} (ii) and (\ref{eq:lemlogkerm^2}), we obtain the claim. 
			Thus we finish the proof of Lemma \ref{lem:logkerm^2}. 
		\end{proof}

\subsection{Proof of Theorem} 

We define the ``V-smooth pair'' as follows. 

\begin{defn}\label{Steenbrinkdefn}
	Let $U$ be a 3-fold with only terminal quotient singularities and $D \subset U$ its reduced divisor. 
	A pair $(U,D)$ is called a {\it V-smooth pair} if, for each point $p$, 
	there exists a Stein neighborhood $U_p$ 
	such that the index one cover $ \pi_p \colon V_p \rightarrow U_p$ satisfies that $\pi_p^{-1}(D \cap U_p) \subset V_p$ is a smooth divisor. 
	\end{defn}

We define ``simultaneous $\mathbb{Q}$-smoothing'' as follows. 

\begin{defn}\label{simultaneousQsmoothingdefn}
Let $X$ be a $3$-fold with only terminal singularities and 
$D \in |{-}K_X|$ an anticanonical element. 

We call a deformation $f \colon (\mathcal{X}, \mathcal{D}) \rightarrow \Delta^1$ of $(X, D)$ 
 a {\it simultaneous $\mathbb{Q}$-smoothing} if $\mathcal{X}_t$ and $\mathcal{D}_t$ 
  have only quotient singularities and 
  $(\mathcal{X}_t, \mathcal{D}_t)$ is a V-smooth pair (Definition \ref{Steenbrinkdefn}).  	
	\end{defn}

We give the proof of the main theorem in the following.

\begin{thm}\label{maintheorem}
Let $X$ be a $\mathbb{Q}$-Fano $3$-fold  
such that there exists an element $D \in |{-}K_X|$  with only isolated singularities. 

Then $(X,D)$ has a simultaneous $\mathbb{Q}$-smoothing. 
\end{thm}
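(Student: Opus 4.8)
The plan is to reduce the global existence of a simultaneous $\mathbb{Q}$-smoothing to the local problem solved by the coboundary-map analysis, exactly along the lines sketched in the introduction. First I would invoke Theorem \ref{qsmqfanothmintro} to reduce to the case where $X$ has only terminal quotient singularities and $A_{1,2}/4$-singularities, and then use Theorem \ref{takagielephantthm} to arrange that $D \in |{-}K_X|$ is Du Val away from the finitely many non-Gorenstein points, so that the remaining non-Du Val (indeed non-V-smooth) points form a finite set $\{p_1,\dots,p_l\} = \Sing D$ lying over those singularities. Around each $p_i$ I take a Stein neighborhood $U_i$ of the standard local form, the resolution $\mu_{i,2}\colon U_{i,2}\to U_i$ of Lemma \ref{U_2lem}, and the local coboundary map $\phi_i := \phi_{U_i}$ of \eqref{phiUdefn}. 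Since each $U_i$ contains a Du Val elephant, there is a local simultaneous $\mathbb{Q}$-smoothing direction $\eta_i \in T^1_{(U_i,D_i)}$; the content of Lemma \ref{logkerm^2lem} is that $\Ker\phi_i \subset \varphi_{S_{D_i}}(\mathfrak{m}^2 T^1_{(U_i,D_i)})$, so $\phi_i$ is nonzero and, more importantly, does not kill the order-one part of a smoothing direction.

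Next I would set up the global comparison. Writing $U' := X \setminus \Sing D$ and using $D$ ample (so $X \setminus D$ is affine and $\Sing D \subset D$), I define the global coboundary maps
\begin{equation*}
\psi_i \colon T^1_{(X,D)} \xrightarrow{\ \varphi_{S_D}\ } H^1(X \setminus \Sing D, \Omega^2(\log D)) \xrightarrow{\ \phi_i\ } H^2_{E_{i,2}}(U_{i,2}, \Omega^2(\log \tilde D_i + E_{i,2})),
\end{equation*}
which fit into the diagram of the introduction with $\phi_i \circ p_{U_i} = \psi_i$. The two facts I need are: (a) each $\phi_i$ is injective modulo $\mathfrak{m}^2$ in the precise sense of Lemma \ref{logkerm^2lem}, and (b) the combined map $\oplus\psi_i$ is surjective. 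For (b) I would argue, using the Fano hypothesis, that the relevant global $H^2$ on $X \setminus D$ (which is affine) vanishes, so that the local cohomology groups supported on the $E_{i,2}$ are surjected onto; this is the step where ampleness of $-K_X$ enters, via a Serre-type or Fujino-type vanishing for the log sheaf after the blow-up, analogous to Lemma \ref{QFujinotoric}.

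Granting (a) and (b), the diagram chase produces the smoothing direction. Given the local directions $\eta_i$, their images $\phi_i(\varphi_{S_{D_i}}(\eta_i))$ are nonzero in the order-one part; by surjectivity of $\oplus\psi_i$ I lift $(\phi_i\varphi_{S_{D_i}}(\eta_i))_i$ to some $\eta \in T^1_{(X,D)}$ with $p_{U_i}(\eta) - \eta_i \in \Ker\phi_i \subset \mathfrak{m}^2 T^1_{(U_i,D_i)}$ for each $i$. Thus $\eta$ restricts at each $p_i$ to $\eta_i$ plus an order-$\ge 2$ term, and Lemma \ref{vistoli} (the Bertini-type stability under adding high-multiplicity terms, applied on the index-one covers $V_i$) guarantees that $p_{U_i}(\eta)$ still induces a smoothing of the singularity of $D_i$. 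Since deformations of the pair $(X,D)$ are unobstructed by \cite[Theorem 2.17]{Sano}, the first-order direction $\eta$ integrates to a one-parameter deformation $(\mathcal{X},\mathcal{D})\to\Delta^1$, whose general fiber is then a $\mathbb{Q}$-Fano threefold with only quotient singularities whose elephant is V-smooth, i.e.\ a simultaneous $\mathbb{Q}$-smoothing.

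I expect the main obstacle to be step (b), the surjectivity of $\oplus\psi_i$: the naive hope $H^2(X,\Theta_X(-\log D)) = 0$ fails (as the introduction warns), so one cannot simply kill the obstruction group directly. The resolution is to pass to the local cohomology formulation on the carefully chosen $U_{i,2}$, where the effectivity of $G'_2$ from Lemma \ref{logkerm^2lem} and the Fano/ampleness input make the comparison map surjective; threading the vanishing through the log sheaf $\Omega^2(\log \tilde D_i + E_{i,2})$ on a singular $U_{i,2}$, via the reflexive push-forward technique of Lemma \ref{QFujinotoric}, is the delicate part. The secondary subtlety is bookkeeping the two singularity types ($\mathbb{C}^3/\mathbb{Z}_r(1,a,r-a)$ versus the $A_{1,2}/4$ case) uniformly, but Lemmas \ref{blowdownlem} and the $A_{1,2}/4$ analogue already record $\Image(\mu_{i,1})_* \subset \mathfrak{m}^2 T^1_{(U_i,D_i)}$ in both cases, so these feed into Lemma \ref{logkerm^2lem} without further case analysis.
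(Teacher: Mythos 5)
Your proposal follows the paper's proof in all essential respects: the reduction via Theorems \ref{qsmqfanothmintro} and \ref{takagielephantthm}, the local coboundary maps attached to the resolutions of Lemma \ref{U_2lem}, the kernel estimate of Lemma \ref{logkerm^2lem}, the diagram chase, Lemma \ref{vistoli}, and unobstructedness. Two points, however, are handled incorrectly or left incomplete. First, the surjectivity step (your (b)) does not come from a Serre-- or Fujino--type vanishing on the blow-up: $-K_{\tilde X}$ is not ample after the modification, and the spaces $U_{i,2}$ are smooth by construction, so Lemma \ref{QFujinotoric} is not the relevant tool here (it is used only inside Lemmas \ref{blowdownlem} and \ref{surjembabs}). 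The paper instead proves $H^2(\tilde X, \Omega^2_{\tilde X}(\log \tilde D + E)) = 0$ by observing that this group is a subquotient of $H^4(\tilde X \setminus (\tilde D + E), \mathbb{C}) = H^4(X \setminus D, \mathbb{C})$, which vanishes by mixed Hodge theory because $X \setminus D$ is a smooth affine $3$-fold; ampleness of $-K_X$ enters only through the affineness of $X \setminus D$. You correctly flag affineness as the input, but the mechanism you propose for converting it into the vanishing would not go through as stated.

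Second, your endgame overstates what the constructed deformation achieves. Lemma \ref{vistoli} guarantees that $p_{U_i}(\eta)$ still smooths the divisor $\Delta_i$ in the index-one cover, but it says nothing about $\mathbb{Q}$-smoothing the ambient $A_{1,2}/4$-points of $X$ or about arranging the V-smooth condition at the points where $D$ is already Du Val (the paper treats those points separately, requiring $\psi_i(\eta)=0$ there). So the general fiber of the first deformation is only a $\mathbb{Q}$-Fano $3$-fold with a Du Val elephant; to conclude that $(X,D)$ has a simultaneous $\mathbb{Q}$-smoothing in the sense of Definition \ref{simultaneousQsmoothingdefn}, the paper applies \cite[Theorem 1.9]{Sano} to that general fiber as a final step. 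Without this (or an equivalent argument), your conclusion that the general fiber is already a V-smooth pair with quasi-smooth total space is not justified.
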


\begin{proof}
	By Theorem \ref{qsmqfanothmintro}, we can assume that $X$ has only quotient singularities and $A_{1,2}/4$-singularities. 
We can also assume that $D$ has at worst Du Val singularities outside the non-Gorenstein points of $X$ by Theorem 
\ref{thm:takagielephant}. 
	
	Let $m$ be a sufficiently large integer such that $|{-}mK_X|$ contains a smooth element $D_m$ such that $\Sing D \cap D_m = \emptyset$. 
	Let $\pi \colon Y \rightarrow X$ be a cyclic cover branched along $D_m$ and $\Delta:= \pi^{-1}(D)$. 
	This induces an index one cover around each points of $\Sing X$ and $Y$ has only $A_{1,2}$-singularities, where 
	an $A_{1,2}$-singularity is a singularity analytically isomorphic to $0 \in (x^2+y^2+z^3+u^2=0) \subset \mathbb{C}^4$.  
	
	Let $ p_1, \ldots, p_l \in \Sing D$ be the image of non-Du Val singular points of $\Delta$ and $p_{l+1}, \ldots, p_{l+l'}$ 
	the image of Du Val singularities of $\Delta$. 
	Let $U_i \subset X$ be a Stein neighborhood of $p_i$ and $D_i:= D \cap U_i$ for $i=1, \ldots, l+l'$. 
	For $i=1,\ldots, l$, let $\mu_{i,1} \colon U_{i,1} \rightarrow U_i$ be the weighted blow-up constructed in Section \ref{firstblowupsection} and 
	$\mu_{i,12} \colon U_{i,2} \rightarrow U_{i,1}$ the birational morphism constructed in Lemma \ref{U_2lem}. 
	Let $\mu_{i,2}:= \mu_{i,1} \circ \mu_{i,12} \colon U_{i,2} \rightarrow U_i$ be the composition. 
	For $i=l+1, \ldots, l+l'$, let $\mu_i \colon \tilde{U}_i \rightarrow U_i$ be a projective birational morphism such that $\tilde{U}_i$ is smooth, $\mu_i^{-1}(D_i)$ is  
 a SNC divisor and $\mu_i$ is an isomorphism outside $p_i$.   
	
	By patching these $\mu_{i,2}$ for $i=1, \ldots, l$ and $\mu_i$ for $i=l+1, \ldots, l+l'$, 
	we construct a projective birational morphism $\mu \colon \tilde{X} \rightarrow X$ such that $\tilde{X}$ is smooth and
	 $\mu^{-1}(D) \subset \tilde{X}$ is a SNC divisor. 
	Let $\tilde{D} \subset \tilde{X}$ be the strict transform of $D$ and $E \subset \tilde{X}$ the $\mu$-exceptional divisor. 
	Also let $\tilde{D}_i := \tilde{D} \cap \mu^{-1}(U_i)$ and $E_i:= \mu^{-1}(p_i)$ for $i=1, \ldots, l+l'$. 
 
 We use the following diagram; 
 \begin{equation}\label{minadiagmult2}
	 \xymatrix{
	 H^1(X', \Omega_{X'}^2(\log D')) \ar[r]^{ \psi} \ar[d]^{\oplus p_{U_i}} & H^2_E(\tilde{X}, \Omega^2_{\tilde{X}}(\log \tilde{D} +E)) 
	 \ar[d]_{\oplus \varphi_i}^{\simeq} 
	 \ar[r] & H^2(\tilde{X}, \Omega^2_{\tilde{X}}(\log \tilde{D}+E)) \\
	 \oplus_{i=1}^{l+l'} H^1(U_i', \Omega^2_{U_i'}(\log D_i')) \ar[r]^{ \oplus \phi_i} & 
	 \oplus_{i=1}^{l+l'} H^2_{E_i}( \tilde{U}_i, \Omega^2_{\tilde{U}_i}(\log \tilde{D}_i +E_i)),  
	 &
	 }
	 \end{equation}
	where $X':= X \setminus \{p_1, \ldots, p_{l+l'} \}$ and $D' := D \cap X'$. 
	
 For $i = 1, \ldots, l $, let   
	 $\eta_i \in H^1(U'_i, \Omega^2_{U'_i}(\log D'_i))$ be an element inducing a simultaneous $\mathbb{Q}$-smoothing of $(U_i, D_i)$. 
	We see that $H^2(\tilde{X}, \Omega^2_{\tilde{X}}(\log \tilde{D}+E)) =0$ 
	since $\tilde{X} \setminus (\tilde{D}+E) \simeq X \setminus D$ is a smooth affine variety and 
	 $H^2(\tilde{X}, \Omega^2_{\tilde{X}}(\log \tilde{D}+E))$ is a subquotient of $H^4(\tilde{X} \setminus (\tilde{D}+E), \mathbb{C}) =0$
	 by the mixed Hodge theory on a smooth affine variety. 
	Thus there exists $\eta \in H^1(X', \Omega^2_{X'}(\log D'))$ such that $\psi_i(\eta) = (\varphi_i)^{-1}(\phi_i(\eta_i))$ for $i=1 ,\ldots, l$ and 
	$\psi_i(\eta) =0$ for $i=l+1, \ldots l+l'$. 
	
	Consider $1 \le i \le l$. 
	By $\eta_i - p_{U_i}(\eta) \in \Ker \phi_i$ and Lemma \ref{lem:logkerm^2}(i),  
	we obtain 
	\[\pr_{U_i} (\varphi_{S_{D_i}}^{-1} (\eta_i - p_{U_i}(\eta)))=0 \in T^1_{U_i} 
	\] 
	for $\varphi_{S_{D_i}} \colon T^1_{(U_i, D_i)} \stackrel{\sim}{\rightarrow} H^1(U'_i, \Omega^2_{U'_i}(\log D'_i))$. 
	Hence $p_{U_i}(\eta)$ induces a $\mathbb{Q}$-smoothing of $U_i$. 
	Thus it is enough to consider the case where $U_i$ has only quotient singularity. 
	In this case, we have   
	\begin{equation}\label{differm^2}
	\varphi_{S_{D_i}}^{-1} (\eta_i - p_{U_i}(\eta)) \in \mathfrak{m}^2 T^1_{(U_i,D_i)}
	\end{equation} 
	by Lemma \ref{lem:logkerm^2}(ii). 
	Let $\pi_i \colon V_i \rightarrow U_i$ be the index one cover and $\Delta_i:= \pi_i^{-1}(D_i) \subset V_i$. 
	By (\ref{differm^2}) and Lemma \ref{vistoli}, we see that $p_{U_i}(\eta)$ induces a smoothing of $\Delta_i$. Thus it induces a simultaneous $\mathbb{Q}$-smoothing of $(U_i, D_i)$ as well.  
	
	By \cite[Theorem 2.17]{Sano}, we can lift the first order deformation $\eta$ 
	to a deformation $f \colon (\mathcal{X}, \mathcal{D}) \rightarrow \Delta^1$ 
	of $(X,D)$ over a unit disc $\Delta^1$. 
	This $f$ induces a simultaneous $\mathbb{Q}$-smoothing of $(U_i,D_i)$ for $i=1,\ldots,l$. 
	Thus we can deform all non-Du Val singularities of $D$ and 
	obtain a $\mathbb{Q}$-Fano $3$-fold with a Du Val elephant as a general fiber of the deformation $f$.  
	Moreover, by \cite[Theorem 1.9]{Sano}, there exists a simultaneous $\mathbb{Q}$-smoothing of this $\mathbb{Q}$-Fano $3$-fold. 
	Thus we finish the proof of Theorem \ref{maintheorem}.   
	\end{proof}

\section{Examples}\label{examplesection}
Shokurov and Reid proved the following theorem. 

\begin{thm}
Let $X$ be a Fano $3$-fold with only canonical Gorenstein singularities. 

Then a general member $D \in |{-}K_X|$ has only Du Val singularities. 
\end{thm}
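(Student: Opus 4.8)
The plan is to reduce the statement, via adjunction, to a normality-plus-rationality assertion and then to a local problem at the finitely many worst points of $X$. Since $X$ is Gorenstein the anticanonical class $-K_X$ is Cartier, so a general $D \in |{-}K_X|$ is an effective Cartier divisor and adjunction gives $K_D \sim (K_X+D)|_D \sim 0$. Hence $D$ is a Gorenstein surface with trivial dualizing sheaf, and ``$D$ is Du Val'' is equivalent to ``$D$ is normal with only rational (equivalently canonical) singularities.'' So it suffices to prove that a general member is normal and has rational singularities.

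First I would assemble the cohomological input. Because $X$ has canonical, hence rational Gorenstein, singularities and $-K_X$ is ample, Kawamata--Viehweg vanishing applied to $\mathcal{O}_X = \mathcal{O}_X(K_X +(-K_X))$ gives $H^i(X,\mathcal{O}_X)=0$ for $i>0$; by Serre duality $H^i(X,\mathcal{O}_X(K_X))=0$ for $i<3$. Feeding these into the restriction sequence $0 \to \mathcal{O}_X(K_X) \to \mathcal{O}_X \to \mathcal{O}_D \to 0$ shows that a general $D$ is connected with $H^1(D,\mathcal{O}_D)=0$ and that $H^0(X,\mathcal{O}_X) \to H^0(D,\mathcal{O}_D)$ is surjective; once normality is proved below, connectedness upgrades to irreducibility. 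These vanishings are exactly what one expects of a ``K3-like'' elephant, and they will also feed the inversion-of-adjunction step.

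Next I would establish normality by Serre's criterion $R_1+S_2$. As a Cartier divisor on the Cohen--Macaulay variety $X$, $D$ is itself Cohen--Macaulay, so $\depth_p \mathcal{O}_{D,p} \ge \dim \mathcal{O}_{D,p}$ and $D$ is automatically $S_2$. For $R_1$, Bertini's theorem shows that a general member is smooth away from $\Bs|{-}K_X| \cup \Sing X$; thus I must check that the remaining bad locus meets a general $D$ in codimension at least two on $D$. This amounts to controlling the base locus (showing, as Shokurov does in the smooth case, that $|{-}K_X|$ has no one-dimensional base component forcing a curve of non-normal points) and verifying regularity in codimension one at the singular points of $X$.

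The heart of the argument, and the main obstacle, is the local analysis at the singular points of $X$: one must show that a general anticanonical member acquires only a canonical (Du Val) singularity at each canonical Gorenstein point of $X$ and at each base point. The clean way to organize this is inversion of adjunction. Since $K_X+D \sim 0$, it is enough to prove that the pair $(X,D)$ is \emph{plt} along $D$ for general $D$: the Koll\'ar--Shokurov connectedness and inversion-of-adjunction theorem then identifies the different on $D$ with $0$ and forces $(D,0)$ to be klt, hence---being Gorenstein with $K_D \sim 0$---Du Val. Proving plt for general $D$ is a Bertini-type statement for log canonical thresholds that one checks at the finitely many points of $\Sing X \cup \Bs|{-}K_X|$, and here one genuinely needs Reid's structure theory of three-dimensional canonical Gorenstein singularities, together with the fact (provable from the vanishing above and the freeness of $|{-}K_X|$ appropriate to each stratum) that a general member separates tangent directions enough to stay log terminal. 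This local step is where all the real work lies; the global reduction steps above are formal once the vanishing is in place.
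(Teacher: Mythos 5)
The paper does not prove this theorem: it is quoted in Section \ref{examplesection} as a classical result of Shokurov (smooth case) and Reid (canonical Gorenstein case), with no argument given, so there is no in-paper proof to compare against. I therefore assess your proposal on its own.

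Your global reductions are correct and standard: $-K_X$ is Cartier, adjunction gives $K_D\sim 0$, Kawamata--Viehweg gives $H^i(X,\mathcal{O}_X)=0$ for $i>0$, $S_2$ comes for free from Cohen--Macaulayness of $X$, and for a Gorenstein surface with trivial dualizing sheaf ``Du Val'' is indeed equivalent to ``normal with rational singularities.'' The genuine gap is the step you call the heart of the argument and then defer. By inversion of adjunction, ``$(X,D)$ is plt near $D$'' is \emph{equivalent} to ``$(D,\mathrm{Diff}_D(0))$ is klt,'' which for a Cartier divisor $D$ on Gorenstein $X$ with $K_D\sim 0$ is precisely the assertion that $D$ is Du Val. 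So your proposed reduction to plt-ness of $(X,D)$ restates the theorem rather than reducing it, and describing the remaining verification as ``a Bertini-type statement for log canonical thresholds'' does not supply an argument: general members of an ample linear system can perfectly well fail to be plt at base points and at singular points of the ambient variety --- the non-Gorenstein examples in Section \ref{examplesection} of this very paper (e.g.\ Example \ref{egwhsnonlcelephant}, where the unique elephant is not even log canonical) show that some input special to the canonical \emph{Gorenstein} situation is indispensable. The actual proofs of Shokurov and Reid consist almost entirely of that missing local work: a detailed analysis of $\Bs|{-}K_X|$ (showing it is empty, finite, or a controlled curve) together with the structure theory of three-dimensional canonical Gorenstein points (cDV points and crepant partial resolutions). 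None of this appears in your sketch, so as written the proposal establishes only the formal reductions and leaves the substance of the theorem unproved.
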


For non-Gorenstein $\mathbb{Q}$-Fano $3$-folds, this statement does not hold. 
We give several examples of $\mathbb{Q}$-Fano $3$-folds without Du Val elephants. 

\begin{eg}(\cite{Iano-Fletcher})
Iano-Flethcer gave an examples of a $\mathbb{Q}$-Fano $3$-fold without elephants.  
Let $X:= X_{12,14} \subset \mathbb{P}(2,3,4,5,6,7)$ be a weighted complete intersection of degree $12$ and $14$. 
Then we have $|{-}K_X| = \emptyset$ and general $X$ have only terminal quotient singularities. 
\end{eg}

Iano-Fletcher gave a list of 95 families of $\mathbb{Q}$-Fano $3$-fold weighted hypersurfaces. 
General members of those families have only quotient singularities and they have Du Val elephants.   
However, by taking special members in those families, we can construct weighted hypersurfaces without Du Val elephants as follows. 

\begin{eg}
Let $X:= X_{14}:= ((x^{14}+x^2 y_1^6 )+w^2+y_1^3 y_2^4 + y_2^7 + y_1z^4 =0) \subset \mathbb{P}(1,2,2,3,7)$ be 
a weighted hypersurface with coordinates $x, y_1, y_2, z, w$ of weights $1,2,2,3,7$ respectively. 
This is a modified version of an example in \cite[4.8.3]{ABR}.  

We can check that $X$ has only terminal singularities. 
It has three $1/2(1,1,1)$-singularities on the $(y_1,y_2)$-axis, a terminal singularity $(x^2+w^2+z^4 +y_2^4=0)/ \mathbb{Z}_2(1,1,1,0)$ and 
a $1/3(1,2,1)$-singularity at $[0:0:0:1:0]$.   

We see that $|{-}K_X| = \{D \}$ and $D$ has an elliptic singularity $(w^2+y_2^4 +z^4 =0) / \mathbb{Z}_2(1,0,1)$.  
In fact, this is log canonical. 
\end{eg}

\begin{eg}\label{egwhsnonlcelephant}
Let $X:= (x^{15} +x y^7 + z^5 + w_1^3 +w_2^3 =0) \subset \mathbb{P}(1,2,3,5,5)$ be a weighted hypersurface, 
where $x,y,z,w_1,w_2$ are coordinate functions with degrees $1,2,3,5,5$ respectively. 
We can check that $X$ has a $1/2(1,1,1)$-singularity and three $1/5(1,2,3)$-singularities. 
Thus $X$ is a $\mathbb{Q}$-Fano $3$-fold with only terminal quotient singularities. 

On the other hand, we have $|{-}K_X| = \{D \}$, where $D:= (z^5+w_1^3 +w_2^3=0) \subset \mathbb{P}(2,3,5,5)$. 
We see that the singularity $p=[1:0:0:0] \in D$ is isomorphic to a singularity $(x_1^5+x_2^3 +x_3^3=0)/ \mathbb{Z}_2$, where the $\mathbb{Z}_2$-action is of type 
$1/2(1,1,1)$. The singularity is not Du Val. 
This is also log canonical. 

We exhibit a simultaneous $\mathbb{Q}$-smoothing of this $(X,D)$ explicitly. 
For $\lambda \in \mathbb{C}$, let 
\[
X_{\lambda}:= (x^{15} +x y^7 + z^5 + w_1^3 +w_2^3 + \lambda y^6 z =0) \subset \mathbb{P}(1,2,3,5,5). 
\] 
For sufficiently small $\lambda \neq 0$, we see that 
$X_{\lambda}$ has only terminal quotient singularities and a Du Val elephant. 
Indeed, we see that $|{-}K_{X_{\lambda}}| = \{D_{\lambda} \}$, where 
\[
D_\lambda \simeq (z^5 +w_1^3 +w_2^3 + \lambda y^6 z =0) \subset \mathbb{P}(2,3,5,5)
\]
is a quasi-smooth hypersurface with only Du Val singularities.  
\end{eg}




\begin{eg}
Let $X:= X_{16} := (x^{16} +x(z^5 + zy^6) + yu^2+w^4 =0) \subset \mathbb{P}(1,2,3,4,7)$ be a weighted hypersurface 
with coordinates $x,y,z,w,u$ with weights $1,2,3,4,7$ respectively. 

Firstly, we check that $X$ has only terminal singularities. 
By computing the Jacobian of the defining equation of $X$, we see that $X$ is quasi-smooth outside the points on an affine piece $y \neq 0$ 
such that $x=w=u=0$ and $z(z^4 +y^6 =0)$. We can describe the singularities as follows; 
An affine piece $(x \neq 0)$ is smooth. An affine piece $(y \neq 0)$ has two singularities isomorphic to $(xz +w^4 +u^2=0) \subset \mathbb{C}^4$ and
 an singularity $(xz +w^4+u^2=0)/\mathbb{Z}_2$, where $\mathbb{Z}_2$ acts on $x,z,w,u$ with weights $1/2(1,1,0,1)$. 
 They are terminal by the classification (\cite[Theorem 6.5]{KSB}). 
 On a piece $(z \neq 0)$, there exists a $1/3(2,1,2)$-singularity. 
 A piece $(w \neq 0)$ is smooth. A piece $(u \neq 0)$ has a $1/7(1,3,4)$-singularity.   

Next, we check that $|{-}K_X|$ has only non-normal elements. 
Indeed, we have $|{-}K_X| = \{D \}$ with $D= (yu^2 + w^4=0) \subset \mathbb{P}(2,3,4,7)$ and the singular locus $\Sing D$ is non-isolated. 
Actually, $D$ is not normal crossing in codimension $1$. 
We also see that $\Sing D \simeq \mathbb{P}^1 \sqcup \{ {\rm pt} \}$. 

\end{eg}

We could not find an example of a $\mathbb{Q}$-Fano $3$-fold without Du Val elephants such that $h^0(X,-K_X) \ge 2$. 
Thus the following question is natural. 

\begin{prob}
Let $X$ be a $\mathbb{Q}$-Fano $3$-fold such that $h^0(X, -K_X) \ge 2$. 

Does there exist a Du Val elephant of $X$? Or, does there exist a normal elephant of $X$? 
\end{prob}

We can find an example of a klt $\mathbb{Q}$-Fano $3$-fold with only isolated quotient singularities whose anticanonical 
system contains only non-normal elements as follows. 

\begin{eg}\label{kltnonnormalexample}
	Let $X:=X_{15} \subset \mathbb{P}(1,1,5,5,7)$ be a general weighted hypersurface of degree $15$ in the weighted projective space.  
	Then $X$ has only three $1/5(1,1,2)$-singularities and one $1/7(1,5,5)$-singularity. 
	We see that $-K_X = \mathcal{O}_X(4)$ and the linear system $|{-}K_X|$ contains only reducible members. 
	Since general hypersurfaces $X$ satisfy this property, the statement as in Conjecture \ref{simulqsmconj} (ii) does not hold in this case. 
	\end{eg}

\section{Non-isolated case}

If every member of  $|{-}K_X|$ has non-isolated singularities, the deformation of singularities gets complicated and we do not know the answer for Conjecture \ref{simulqsmconj}. 
 However, we can reduce the problem to certain local setting as follows. 

\begin{thm}\label{nonisolsimulqsm}
	Let $X$ be a $\mathbb{Q}$-Fano $3$-fold. Assume that there exists a reduced member $D \in |{-}K_X|$ such that $C:= \Sing D$ is non-isolated. 
	Let $U_C$ be an analytic neighborhood of $C$ and $D_C := D \cap U_C$. 
	Assume also that there exists a deformation $(\mathcal{U}_C, \mathcal{D}_C) \rightarrow \Delta^1$ 
	such that $\mathcal{D}_{C,t}$ has only isolated singularities for $0 \neq t \in \Delta^1$. 
	
	Then there exists a simultaneous $\mathbb{Q}$-smoothing of $(X,D)$.  
	\end{thm}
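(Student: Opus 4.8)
The plan is to reduce to Theorem~\ref{maintheorem}. Concretely, I will use the given local family to produce a global first-order deformation of $(X,D)$ whose nearby fibres have only isolated singularities along $C$, integrate it over a disc, and then apply Theorem~\ref{maintheorem} to a general fibre. Recall that, $X$ being a terminal threefold, $\Sing X$ is a finite set of points, so $Z := \Sing X \cup \Sing D$ has the curve $C$ as its only positive-dimensional component. Let $\eta_C \in T^1_{(U_C,D_C)}$ be the first-order deformation induced by the given family $(\mathcal{U}_C,\mathcal{D}_C) \to \Delta^1$; by hypothesis it moves $\Sing D_C$ off the curve so that nearby fibres acquire only isolated singularities. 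The whole argument runs parallel to the proof of Theorem~\ref{maintheorem}, the new point being that the local cohomology is now supported along a curve rather than at isolated points.

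Next I would choose a resolution $\mu \colon \tilde X \to X$ with $\mu^{-1}(D)$ of SNC support and $\mu$ an isomorphism over $X' := X \setminus Z$, and set up a commutative square of coboundary maps exactly analogous to (\ref{minadiagmult2}): the restriction $\psi \colon H^1(X',\Omega^2_{X'}(\log D')) \to H^2_E(\tilde X, \Omega^2_{\tilde X}(\log \tilde D + E))$, the restriction $p_{U_C}$ to $U_C$, the localisation isomorphism onto $H^2_{E_C}(\tilde U_C, \Omega^2_{\tilde U_C}(\log \tilde D_C + E_C))$, and the coboundary $\phi_C$ of the shape (\ref{phiUdefn}). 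Because $D = -K_X$ is ample, $X \setminus D \simeq \tilde X \setminus (\tilde D + E)$ is a smooth affine threefold, so by the same mixed Hodge theory argument as in the proof of Theorem~\ref{maintheorem} one gets $H^2(\tilde X, \Omega^2_{\tilde X}(\log \tilde D + E)) = 0$; this is where the Fano assumption enters. Surjectivity of $\psi$ onto the kernel of $H^2_E \to H^2$ then produces a global class $\eta \in H^1(X',\Omega^2_{X'}(\log D'))$ whose image under $\psi$ agrees with that of $\phi_C(\eta_C)$, so that, after the identification $\varphi_{S_{D_C}}$ of (\ref{varphiSDdefn}), the difference $\eta_C - p_{U_C}(\eta)$ lies in $\Ker \phi_C$.

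The heart of the matter, and the step I expect to be the main obstacle, is the curve version of Lemma~\ref{logkerm^2lem}: one must show $\Ker \phi_C \subset \mathfrak{m}^2 T^1_{(U_C,D_C)}$, where now $\mathfrak{m}^2$ denotes deformations induced by functions vanishing to order $\ge 2$ \emph{along} $C$. In the isolated case this rested on the choice of a weighted blow-up with negative discrepancy (Lemmas~\ref{nonGorwtlem} and \ref{A12-wtdblowup}) followed by the adapted resolution of Lemma~\ref{U_2lem}, whose net effect was to make the divisor $-(K_{\tilde U_C}+\tilde D_C+E_C)+\mu^{*}(K_{U_C}+D_C)$ effective, forcing $\Ker \phi_C$ to factor through the image of the blow-down morphism and hence to lie in $\mathfrak{m}^2$. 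Here I would need the analogous construction carried out relative to the curve $C$: arranging the discrepancy inequalities to hold uniformly along all of the non-isolated locus, rather than at a single point, and verifying the effectivity of the corresponding boundary divisor, is the genuinely new technical difficulty. I expect this to require a resolution of $U_C$ that blows up smooth centres dominating $C$, chosen so that the strict transform of $D_C$ and the exceptional locus meet the relevant way along the fibres over $C$.

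Granting this, $\eta_C - p_{U_C}(\eta)$ is induced by a function of order $\ge 2$ along $C$. Lemma~\ref{vistoli}, which is stated precisely for a possibly non-isolated singular locus $\Gamma = \Sing D$ (working on the smooth index-one cover near a general point of $C$, where $D_C$ is a hypersurface), then shows that $p_{U_C}(\eta)$ still induces a deformation whose nearby fibres have only isolated singularities along $C$. By unobstructedness of deformations of the pair (\cite[Theorem 2.17]{Sano}) I would integrate $\eta$ to a deformation $(\mathcal{X}, \mathcal{D}) \to \Delta^1$ of $(X,D)$; for $0 \ne t$ small, $\mathcal{X}_t$ is again a $\mathbb{Q}$-Fano threefold and $\mathcal{D}_t \in |{-}K_{\mathcal{X}_t}|$ has only isolated singularities. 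Applying Theorem~\ref{maintheorem} to $(\mathcal{X}_t, \mathcal{D}_t)$ yields a simultaneous $\mathbb{Q}$-smoothing of that pair, and composing the two deformations gives a simultaneous $\mathbb{Q}$-smoothing of $(X,D)$.
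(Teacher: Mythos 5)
Your overall architecture matches the paper's: reduce to Theorem~\ref{maintheorem} by producing a global class $\eta$ whose restriction near $C$ differs from $\eta_C$ by something in the kernel of a coboundary map, kill the obstruction with the mixed Hodge theory vanishing $H^2(\tilde X,\Omega^2_{\tilde X}(\log\tilde D+E))=0$, integrate by unobstructedness, and apply Lemma~\ref{vistoli}. However, you have left the decisive step as an acknowledged gap: you write that the curve analogue of Lemma~\ref{logkerm^2lem} is ``the genuinely new technical difficulty'' and only sketch an expectation that it will require discrepancy inequalities holding uniformly along $C$, via weighted blow-ups adapted to the curve. That statement is never proved in your proposal, so as written the argument is incomplete precisely where it needs to be new.

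Moreover, the route you anticipate is not the one that works in the paper, and the actual fix is both different and simpler. The paper does not prove $\Ker\phi_C\subset\mathfrak{m}^2T^1_{(U_C,D_C)}$ globally along $C$; it localises at a \emph{general} point $p\in C\setminus Z$ and proves $\Ker\phi_p\subset\Image(\mu_p)_*\subset\mathfrak{m}_p^2T^1_{(U_p,D_p)}$ there (Lemmas~\ref{kerimlemnonisol} and \ref{nonisolimagem^2lem}). No weighted blow-up with negative discrepancy is needed: the resolution of Proposition~\ref{essresolpropnonisol} only blows up smooth curves inside $\Sing D_i$ (or inside $D_i\cap E_i$ once $D_i$ is smooth), and effectivity of $-(K+D'_k+E'_k)+(\mu')^*(K+D')$ follows because the first nonempty centre lies in $\Sing D'_{i_0}$, so $\mult_{Z'_{i_0}}(D'_{i_0})-1>0$ and its pullback dominates $E'_k$. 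The containment in $\mathfrak{m}_p^2$ then comes from an elementary observation special to the non-isolated setting: near a general $p\in C$ the resolution is a product $\mu_S\times\id$ of a surface resolution with a disc, any first-order deformation of $(\tilde U_p,\tilde D_p+E_p)$ extends over $\Delta^1$ with $\tilde{\mathcal U}_p\simeq\tilde U_p\times\Delta^1$, and the blown-down divisor $\mathcal D_{p,t}$ remains singular along $C_p\times\{t\}$ for every $t$, which forces its defining function to lie in $\mathfrak{m}_p^2$. This is exactly the phenomenon your appeal to ``uniform discrepancy inequalities along $C$'' would have to reproduce, and it is obtained without them. You should replace your deferred step by this local-at-a-general-point argument (and correspondingly replace the containment $\eta_C-p_{U_C}(\eta)\in\Ker\phi_C$ by its restriction $\iota_{C,p}^*(\iota_C^*(\eta)-\eta_C)\in\Ker\phi_p$, which is all that Lemma~\ref{vistoli} needs).
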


For the proof of the theorem, we need to construct the following resolution of singularities of $(X,D)$. 
The construction is similar to Lemma \ref{U_2lem}. 

\begin{prop}\label{essresolpropnonisol}
Let $X$ be a $3$-fold with only terminal singularities and $D$ be its reduced divisor 
whose singular locus $C:= \Sing D$ is non-isolated. 
There exists a projective birational morphism $\mu \colon \tilde{X} \rightarrow X$ and 
a $0$-dimensional subset $Z \subset X$ with the following properties;
\begin{enumerate}
	\item[(i)] $\tilde{X}$ is smooth and $\mu^{-1}(D)$ has SNC support. 
	\item[(ii)] $\mu$ is an isomorphism over $X \setminus \Sing D \cup \Sing X$. 
	\item[(iii)] $\mu' \colon \tilde{X}' := \mu^{-1}(X \setminus Z) \rightarrow X':= X \setminus Z$ can be written as 
	a composition 
	\[
	\mu': \tilde{X}' = X'_{k} \stackrel{\mu'_{k-1}}{\rightarrow} X'_{k-1} \rightarrow \cdots \rightarrow X'_2 \stackrel{\mu'_1}{\rightarrow} 
	X'_1 = X', 
	\]
	where $\mu'_i \colon X'_{i+1} \rightarrow X'_i$ is an isomorphism or a blow-up of a smooth curve $Z'_i$ with either of the following;  
	\begin{itemize}
	\item If the strict transform $D'_i \subset X'_i$ of $D':= D \cap X' \subset X'$ is singular, we have $Z'_i \subset \Sing D'_i$.  
	\item If $D'_i$ is smooth, we have $Z'_i \subset D'_i \cap E'_i$, where $E'_i$ is the exceptional divisor of 
	$\mu'_{i,1}:= \mu'_1 \circ \cdots \circ \mu'_{i-1} \colon X'_i \rightarrow X'_1$. 
	\end{itemize}
	\end{enumerate}	
	
	As a consequence, the divisor 
	\begin{equation}\label{eq:logcanoantieff}
	-(K_{X'_{k}} + D'_k + E'_k) + (\mu')^* (K_{X'} +D')  
	\end{equation}
	is an effective divisor supported on $E'_k$. 
	\end{prop}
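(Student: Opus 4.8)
The plan is to mimic, now globally along the curve $C=\Sing D$, the construction and discrepancy estimate carried out over an isolated point in Lemma \ref{U_2lem}. First I would record two structural facts. Since $X$ has only terminal singularities, $\Sing X$ is a finite set of points; and if $p\notin D$ then $\mathcal{O}_X(-K_X)\simeq\mathcal{O}_X(D)$ is trivial in a neighbourhood of $p$, so $K_X$ is Cartier at $p$, i.e. $p$ is Gorenstein. Hence every non-Gorenstein point of $X$ lies on $D$ and is a singular point of the surface $D$. After the standard reduction used in the proof of Theorem \ref{maintheorem} (via Theorem \ref{qsmqfanothmintro}) we may assume $X$ has only quotient and $A_{1,2}/4$-singularities, all non-Gorenstein, so that $\Sing X\subseteq\Sing D$. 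I then fix a functorial embedded resolution $\mu\colon\tilde{X}\to X$ of the pair $(X,D)$ as produced by \cite[Theorem A.1]{BierstoneMilmanMinimal1}; it is a composite of blow-ups of smooth centres, and over $X\setminus\Sing D$ (where $X$ is smooth and $D$ is a smooth, hence SNC, divisor) it is an isomorphism. This gives properties (i) and (ii).

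For property (iii) I would set $Z$ to be the union of $\Sing X$, the singular points $\Sing C$ of the curve $C$, and the images of all resolution centres that are zero-dimensional, exactly as the set $Z$ is formed in Lemma \ref{U_2lem}. Over $X':=X\setminus Z$ the ambient $X'_1=X'$ is smooth and the transverse type of $D$ along $C$ is generically constant, so the canonical resolution blows up only smooth curves: while the strict transform $D'_i$ is still singular it blows up a component of the smooth curve $\Sing D'_i$, and once $D'_i$ has become smooth it blows up the smooth curves in $D'_i\cap E'_i$ needed to achieve normal crossings with the exceptional locus. This produces the factorization $\mu'\colon\tilde{X}'=X'_k\to\cdots\to X'_1=X'$ with centres $Z'_i$ of the two prescribed types.

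The discrepancy statement then follows from a step-by-step computation with the \emph{total} reduced boundary $\Lambda'_i:=D'_i+E'_i$, where $E'_i$ is the accumulated exceptional divisor, so that $\Lambda'_1=D'$. For the blow-up $\mu'_i$ of a smooth curve $Z'_i$ in the smooth threefold $X'_i$, with new exceptional divisor $E_i\subset X'_{i+1}$, one has
\[
K_{X'_{i+1}}+\Lambda'_{i+1}=(\mu'_i)^*(K_{X'_i}+\Lambda'_i)+\bigl(2-\mult_{Z'_i}\Lambda'_i\bigr)E_i .
\]
In the first case $Z'_i\subset\Sing D'_i$ gives $\mult_{Z'_i}D'_i\ge 2$, and in the second case $Z'_i\subset D'_i\cap E'_i$ gives $\mult_{Z'_i}D'_i=1$ together with $\mult_{Z'_i}E'_i\ge 1$; either way $\mult_{Z'_i}\Lambda'_i\ge 2$, so each coefficient $2-\mult_{Z'_i}\Lambda'_i$ is non-positive. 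Summing these contributions as in the displayed identity of Lemma \ref{U_2lem}, and writing $\mu'_{k,i+1}\colon X'_k\to X'_{i+1}$ for the composite of the remaining blow-ups,
\[
K_{X'_k}+D'_k+E'_k-(\mu')^*(K_{X'}+D')=\sum_{i}\bigl(2-\mult_{Z'_i}\Lambda'_i\bigr)(\mu'_{k,i+1})^*E_i
\]
is an anti-effective divisor supported on the $\mu'$-exceptional locus, and negating shows that the divisor \eqref{eq:logcanoantieff} is effective and supported on $E'_k$.

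The hard part will be justifying that a single finite set $Z$ suffices: I must argue that the locus where the resolution introduces a zero-dimensional centre, or where the transverse singularity type of $D$ along $C$ degenerates, is genuinely finite. This is where the generic equisingularity of $D$ along the smooth curve $C\setminus\Sing C$ enters, so that blowing up the smooth part of the singular locus resolves $D$ uniformly and only finitely many fibres require zero-dimensional centres. A secondary technical point is the compatibility of the global functorial $\mu$ with the curve-by-curve description over $X'$, which again follows from the functoriality of the Bierstone--Milman algorithm exactly as in Lemma \ref{U_2lem}.
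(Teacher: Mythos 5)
Your construction coincides with the paper's: a preliminary resolution $\nu_1$ of the finite set $\Sing X$, followed by the Bierstone--Milman embedded resolution of the strict transform of $D$, with $Z$ the union of the images in $X$ of the zero-dimensional centres, so that over $X' = X \setminus Z$ only smooth curves of the two prescribed types get blown up. The one substantive difference is in the discrepancy estimate, where your bookkeeping is cleaner: tracking the full boundary $\Lambda'_i = D'_i + E'_i$ gives the per-step coefficient $2 - \mult_{Z'_i}\Lambda'_i \le 0$ directly, whereas the paper telescopes only $K + D'_i$, obtains effective contributions $(\mult_{Z'_i}(D'_i)-1)(\mu'_i)^{-1}(Z'_i)$, and must then absorb the leftover $-E'_k$ by checking that the pullback of the first nontrivial exceptional divisor (whose centre lies in $\Sing D'$, hence has multiplicity at least $2$) dominates all of $E'_k$. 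Both arguments are correct; yours avoids that last step. Two smaller points. The step you flag as ``the hard part'' is automatic: the resolution is a \emph{finite} composition of blow-ups, so the images in $X$ of the zero-dimensional centres, and of the centres lying over points of $X$, form a finite union of finite closed sets; no equisingularity of $D$ along $C$ is needed to see that $Z$ is zero-dimensional, and any centre mapping into $Z$ simply disappears after restricting to $X'$. Finally, the opening reduction via Theorem \ref{qsmqfanothmintro} should be dropped: the proposition concerns the \emph{given} pair $(X,D)$, and a $\mathbb{Q}$-smoothing replaces that pair, so you cannot assume here that $X$ has only quotient and $A_{1,2}/4$-singularities. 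It is also unnecessary --- the paper handles arbitrary terminal $X$ via the preliminary resolution $\nu_1$ together with the inclusion $\Sing X \subset \Sing D$, and your observation that every non-Gorenstein point lies on $D$, combined with the fact that a Cartier divisor smooth at a point forces the ambient threefold to be smooth there, is the right way to justify that inclusion without deforming anything.
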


\begin{proof}
	Let $\nu_1 \colon X_1 \rightarrow X$ be a composition of blow-ups of smooth centers such that $X_1$ is smooth, 
	the exceptional locus $E_1$ of $\nu_1$ is a SNC divisor and 
	$\nu_1$ is an isomorphism over $X \setminus \Sing X$. 
	Thus the strict transform $D_1 \subset X_1$ of $D$ is a reduced Cartier divisor. 
	   By applying \cite[Theorem A.1]{BierstoneMilmanMinimal1} to the pair $(X_1, D_1)$, we can construct a composition of blow-ups
	 \[
	 \mu_{k,1} \colon X_{k} \stackrel{\mu_{k-1}}{\rightarrow} \cdots \rightarrow X_2 \stackrel{\mu_1}{\rightarrow} X_1,  
	 \]
	 where $\mu_i \colon X_{i+1} \rightarrow X_i$ is a blow-up of a smooth center $Z_i \subset X_i$ such that, 
	 for each $i$, 
	 \begin{itemize}
		 \item $D_i \subset X_i$ is the strict transform of $D_1$, 
		 \item $E_i :=\mu_{i,1}^{-1}(E_1) \subset X_i$ is the exceptional divisor, 
		 where $\mu_{i,1} := \mu_{1} \circ \cdots \circ \mu_{i-1} \colon X_i \rightarrow X_1$, 
		 \end{itemize} 
		 then, for each $i$, 
		 \begin{enumerate}
			 \item[(i)'] $Z_i$ and $E_i$ intersect transversely, 
			 \item[(ii)'] $Z_i \subset \Sing D_i$ or $D_i$ is smooth and $Z_i \subset D_i \cap E_i$, 
			 \item[(iii)'] $X_k$ is smooth and $D_k \cup E_k$ is a SNC divisor. 
			 \end{enumerate}
			 
			 Let $\tilde{X}:= X_k$, $\mu:= \nu_1 \circ \mu_{k,1} \colon \tilde{X} \rightarrow X$ and 
			 \[
			 Z:= \Sing X \cup \bigcup_{\dim \nu_1 (\mu_{i,1}(Z_i)) =0} \nu_1(\mu_{i,1}(Z_i)) \subset X
			 \]
			  the union of $0$-dimensional images of the centers on $X_1$ and the singular locus $\Sing X$ of $X$.  
			 Then we see that these $\tilde{X}, \mu, Z$ satisfy the condition (i) in the statement by the construction of $\mu$. 
			 We can check (ii) by (ii)'. 
			 We check (iii) as follows. 
			
			  Let $X':= X \setminus Z$, $\tilde{X}':= \mu^{-1}(X')$ and 
			  $\mu':= \mu|_{\tilde{X}'} \colon \tilde{X}' \rightarrow X'$. 
			  Let $X'_i:= \mu_{i,1}^{-1}(X')$, $D'_i:= D_i \cap X'_i$ and $E'_i := E_i \cap X'_i$ as well. 
			  We see that $\mu'$ is a composition of blow-ups of smooth curves $Z'_i := Z_i \cap X'_i$ 
			  with the property (iii) in the statement by the property (ii)' of $\mu_{k,1}$. 
			 
			 We can check the last statement about (\ref{eq:logcanoantieff}) as follows. 
			 For $j \le i$, let $\mu'_{i,j} := \mu_j' \circ \cdots \circ \mu'_{i-1} \colon X'_i \rightarrow X'_j$. 
			 We have an equality 
			 \begin{multline}\label{sumequality}
			 -(K_{X'_{k}} + D'_k + E'_k) + (\mu')^* (K_{X'} +D') \\ 
			 = -E'_k + 
			 \sum_{i=1}^{k-1} (\mu'_{k, i+1})^* (-(K_{X'_{i+1}} + D'_{i+1}) + (\mu'_i)^*(K_{X'_i} +D'_i)).   
			  \end{multline}
			  By the condition (iii) of the resolution $\mu$ in the statement, we see that the divisor  
			\[
			-(K_{X'_{i+1}} + D'_{i+1}) + (\mu'_i)^*(K_{X'_i} +D'_i) = (\mult_{Z'_i}(D'_i) -1) (\mu'_i)^{-1}(Z'_i). 
			\]
			 is effective. Moreover, for $i_0:= \min \{i \mid Z'_i \neq \emptyset \}$, we see that $\mult_{Z'_{i_0}} (D'_{i_0}) -1 >0$ and 
			 \[
			  (\mu'_{k, i_0+1})^* ((\mu'_{i_0})^{-1}(Z'_{i_0})) \ge E'_k 
			 \]
			since $Z'_{i_0} \subset \Sing D'_{i_0}$ and $Z'_i$ is contained in the $\mu'_{i,1}$-exceptional divisor  for all $i$. 
Hence, by the equality (\ref{sumequality}), we obtain the effectivity of $-(K_{X'_{k}} + D'_k + E'_k) + (\mu')^* (K_{X'} +D')$. 

Thus we finish the proof of Proposition \ref{essresolpropnonisol}.
	\end{proof}

 We shall use the above resolution $\mu \colon \tilde{X} \rightarrow X$ of the pair $(X,D)$ and use the same notations in the following. 
 
 Let $\tilde{D} \subset \tilde{X}$ be the strict transform of $D$ and $E:= \Exc \mu$ be the exceptional divisor.
 Let $\tilde{D}':= \tilde{D} \cap \tilde{X}'$ and $E':= \Exc \mu'$ for $\mu' \colon \tilde{X}' \rightarrow X'$. 
 By Proposition \ref{essresolpropnonisol}, we see the linear equivalence  
 \begin{equation}\label{eq:logcanoG'defn}
 -(K_{\tilde{X}'} +\tilde{D}' +E') + (\mu')^*(K_{X'} +D') \sim G'    
 \end{equation} 
for some effective divisor $G'$ supported on $\Exc \mu'$. 

Let $X'':= X \setminus \Sing D$ and $D'' := D \cap X''$. 
Let $\tilde{U}_C:= \mu^{-1}(U_C)$, $U_C'':= U_C \setminus \Sing D_C$, 
$\tilde{D}_C:= \tilde{D} \cap \tilde{U}_C$ and 
$D_C'':= D \cap U_C''$. Let $E_C \subset \tilde{U}_C$ be the exceptional divisor for 
$\mu_C \colon \tilde{U}_C \rightarrow U_C$. 
By the property (ii) in Proposition \ref{essresolpropnonisol}, we have open immersions 
$\tilde{\iota} \colon X'' \hookrightarrow \tilde{X}$ and $\tilde{\iota}_C \colon U_C'' \hookrightarrow \tilde{U}_C$. 
We consider the following diagram 
\begin{equation}\label{minadiagnonisol}
\xymatrix{
H^1(X'', \Omega^2_{X''}(\log D'')) \ar[r]^{\psi} \ar[d]^{\iota_C^*} 
& H^2_E(\tilde{X}, \Omega^2_{\tilde{X}}(\log \tilde{D}+E)) \ar[r] 
\ar[d]_{\simeq}^{\pi_C}
& H^2(\tilde{X}, \Omega^2_{\tilde{X}}( \log \tilde{D}+E)) \\ 
H^1(U_C'', \Omega^2_{U_C''}(\log D_C'')) \ar[r]^{\phi_C \ \ \ \ } & H^2_{E}(\tilde{U}_C, \Omega^2_{\tilde{U}_C}( \log \tilde{D}_C + E_C)), & 
}
\end{equation}
where the homomorphisms $\psi$ and $\phi_C$ are the coboundary maps and 
the homomorphism $\iota_C^*$ is a restriction by an open immersion $\iota_C \colon U_C \hookrightarrow X$. 

Let $p \in C \setminus Z$ and $U_p \subset X$ a Stein neighborhood of $p$. 
Let $\tilde{U}_p:= \mu^{-1}(U_p)$, $\mu_p := \mu|_{\tilde{U}_p} \colon \tilde{U}_p \rightarrow U_p$, 
$D_p:= D \cap U_p$, 
$U''_p:= U_p \setminus \Sing D_p$ and $D''_p:= D_p \cap U''_p$. 
We also have an open immersion $U''_p \hookrightarrow \tilde{U}_p$.  
Hence the coboundary map $\phi_C$ fits in the following commutative diagram;  
\begin{equation}\label{phiCphipdiag}
\xymatrix{
H^1(U''_C, \Omega^2_{U''_C}(\log D''_C)) \ar[r]^{\phi_C \ \ \ \ } \ar[d]^{\iota_{C,p}^*} & H^2_{E_C}(\tilde{U}_C, \Omega^2_{\tilde{U}_C} (\log \tilde{D}_C + E_C)) 
\ar[d]^{\iota_{C,p}^*} \\
H^1(U''_p, \Omega^2_{U''_p}(\log D''_p)) \ar[r]^{\phi_p \ \ \ \ } & H^2_{E_p}(\tilde{U}_p, \Omega^2_{\tilde{U}_p}(\log \tilde{D}_p +E_p)), 
}
\end{equation}
where the horizontal maps are coboundary maps of local cohomology and the vertical maps are induced by 
the open immersion $\iota_{C,p} \colon U_p \hookrightarrow U_C$. 

Fix an isomorphism $\varphi_{(U_p, D_p)} \colon \mathcal{O}_{U_p} \simeq \mathcal{O}_{U_p}(-K_{U_p} - D_p)$. 
This induces isomorphisms 
\[
T^1_{(U''_p, D''_p)} \simeq H^1(U''_p, \Theta_{U''_p} (- \log D''_p)) \stackrel{\sim}{\rightarrow} H^1(U''_p, \Omega^2_{U''_p}(\log D''_p)),  
\] 
\[
T^1_{(\tilde{U}_p, \tilde{D}_p +E_p)} \simeq H^1(\tilde{U}_p, \Theta_{\tilde{U}_p} (- \log \tilde{D}_p +E_p)) 
\stackrel{\sim}{\rightarrow} H^1(\tilde{U}_p, \Omega^2_{\tilde{U}_p}(\log \tilde{D}_p +E_p)(G_p)), 
\]
where we set $G_p := G'|_{\tilde{U}_p}$ for $G'$ in (\ref{eq:logcanoG'defn}). 
These isomorphisms fit in the commutative diagram 
\begin{equation}\label{iotaptildediag}
\xymatrix{
T^1_{(\tilde{U}_p, \tilde{D}_p +E_p)} \ar[r]^{(\tilde{\iota}_p)^*} \ar[d]^{\simeq} & T^1_{(U''_p, D''_p)} \ar[d]^{\simeq} \\
H^1(\tilde{U}_p, \Omega^2_{\tilde{U}_p}(\log \tilde{D}_p +E_p)(G_p)) \ar[r]^{\ \ (\tilde{\iota}_p)^*} & H^1(U''_p, \Omega^2_{U''_p}(\log D''_p)) 
}
\end{equation}
and we use the same symbol $(\tilde{\iota}_p)^*$ for the both horizontal maps.

We have the following lemma. 
\begin{lem}\label{kerimlemnonisol}
	We have a relation 
	\[
	\Ker \phi_p \subset \Image (\tilde{\iota}_p)^* \subset H^1(U''_p, \Omega^2_{U''_p}(\log D''_p)),
	\]
	 where $\phi_p$ and $(\tilde{\iota}_p)^*$ 
	are the homomorphisms in the diagrams (\ref{phiCphipdiag}) and (\ref{iotaptildediag}) respectively.  
	\end{lem}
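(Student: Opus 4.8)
The plan is to read off $\Ker\phi_p$ from the local cohomology sequence attached to the exceptional locus, and then to compare it with $\Image(\tilde\iota_p)^*$ using the effectivity of the divisor $G_p$ established in Proposition \ref{essresolpropnonisol}. Write $\mathcal{F}_p := \Omega^2_{\tilde{U}_p}(\log \tilde{D}_p + E_p)$. By property (ii) of Proposition \ref{essresolpropnonisol}, the morphism $\mu_p$ is an isomorphism over $U_p \setminus \Sing D_p = U''_p$, so the open immersion $\tilde{\iota}_p \colon U''_p \hookrightarrow \tilde{U}_p$ has complement (the support of) $E_p$ and identifies $\mathcal{F}_p|_{U''_p}$ with $\Omega^2_{U''_p}(\log D''_p)$. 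The coboundary map $\phi_p$ in (\ref{phiCphipdiag}) is precisely the connecting homomorphism of the local cohomology sequence
\[
H^1(\tilde{U}_p, \mathcal{F}_p) \xrightarrow{\ \mathrm{res}\ } H^1(U''_p, \Omega^2_{U''_p}(\log D''_p)) \xrightarrow{\ \phi_p\ } H^2_{E_p}(\tilde{U}_p, \mathcal{F}_p),
\]
so exactness at the middle term gives $\Ker\phi_p = \Image(\mathrm{res})$, the image of the restriction from the full resolution $\tilde{U}_p$.

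Next I would bring in the twist by $G_p$. By (\ref{eq:logcanoG'defn}) the divisor $G_p = G'|_{\tilde{U}_p}$ is effective and supported on $\Exc \mu_p$, hence on $E_p$; in particular $G_p|_{U''_p} = 0$. The effectivity yields a natural inclusion of sheaves $\mathcal{F}_p \hookrightarrow \mathcal{F}_p(G_p)$ which is an isomorphism over $U''_p$. Taking cohomology and restricting to $U''_p$ produces the commutative square
\[
\xymatrix{
H^1(\tilde{U}_p, \mathcal{F}_p) \ar[r]^{\mathrm{res}\ \ \ \ \ \ } \ar[d] & H^1(U''_p, \Omega^2_{U''_p}(\log D''_p)) \ar@{=}[d] \\
H^1(\tilde{U}_p, \mathcal{F}_p(G_p)) \ar[r]^{(\tilde{\iota}_p)^*\ \ \ \ \ \ } & H^1(U''_p, \Omega^2_{U''_p}(\log D''_p)),
}
\]
where the right vertical map is the identity because $\mathcal{F}_p$ and $\mathcal{F}_p(G_p)$ agree on $U''_p$, and the bottom horizontal map is exactly the map $(\tilde{\iota}_p)^*$ of diagram (\ref{iotaptildediag}). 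Commutativity shows that $\mathrm{res}$ factors through $(\tilde{\iota}_p)^*$, so $\Image(\mathrm{res}) \subseteq \Image(\tilde{\iota}_p)^*$.

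Combining the two steps gives $\Ker\phi_p = \Image(\mathrm{res}) \subseteq \Image(\tilde{\iota}_p)^*$, which is the assertion of the lemma.

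The only nontrivial input is the effectivity of $G_p$: it is exactly what guarantees the sheaf inclusion $\mathcal{F}_p \hookrightarrow \mathcal{F}_p(G_p)$ in the right direction, and this in turn rests on the careful construction of the resolution $\mu$ in Proposition \ref{essresolpropnonisol}, where the blow-up centers are taken inside $\Sing D'_i$ so that the relevant log discrepancies are non-positive. I therefore expect that the main difficulty has already been front-loaded into the choice of resolution; once that effectivity is in hand, the remaining argument is a formal manipulation of the local cohomology sequence together with the twist, and I anticipate no further obstacle.
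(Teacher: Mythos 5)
Your proposal is correct and follows essentially the same route as the paper: the paper likewise identifies $\Ker\phi_p$ with the image of the restriction map $\alpha_p$ from $H^1(\tilde{U}_p,\Omega^2_{\tilde{U}_p}(\log \tilde{D}_p+E_p))$ via the local cohomology exact sequence, and then factors $\alpha_p$ through $(\tilde{\iota}_p)^*$ using the inclusion of sheaves coming from the effectivity of $G_p$. Your commutative square is exactly the paper's commutative triangle, so there is nothing to add.
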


\begin{proof}
	Since we have an exact sequence 
	\[
	H^1(\tilde{U}_p, \Omega^2_{\tilde{U}_p}(\log \tilde{D}_p +E_p)) \xrightarrow[]{\alpha_p}
	 H^1(U''_p, \Omega^2_{U''_p}(\log D''_p)) \xrightarrow[]{\phi_p}
	 H^2_{E_p}(\tilde{U}_p, \Omega^2_{\tilde{U}_p}(\log \tilde{D}_p +E_p)), 
	\]
	we obtain that $\Ker \phi_p = \Image \alpha_p$. 
	By this and the commutative diagram 
	\[
	\xymatrix{
	H^1(\tilde{U}_p, \Omega^2_{\tilde{U}_p}(\log \tilde{D}_p +E_p)) \ar[r]^{\alpha_p} \ar[d] 
	& H^1(U''_p, \Omega^2_{U''_p}(\log D''_p)) \\
H^1(\tilde{U}_p, \Omega^2_{\tilde{U}_p}(\log \tilde{D}_p + E_p) (G_p)) \ar[ru]_{(\tilde{\iota}_p)^*},  & 
} 
	\]
we obtain the claim. 	
	\end{proof}

The open immersion $\iota_p \colon U''_p \hookrightarrow U_p$ induces a restriction homomorphism 
$\iota_p^* \colon T^1_{(U_p, D_p)} \rightarrow T^1_{(U''_p, D''_p)}$. 
This is injective.  
Indeed, for $(\mathcal{U}_p, \mathcal{D}_p) \in T^1_{(U_p, D_p)}$, we see that 
\[
(\iota_p)_* \iota_p^* \mathcal{O}_{\mathcal{U}_p} \simeq \mathcal{O}_{\mathcal{U}_p}, \ \ 
(\iota_p)_* \iota_p^* \mathcal{I}_{\mathcal{D}_p} \simeq \mathcal{I}_{\mathcal{D}_p}
\]
since $U_p \setminus U''_p \subset U_p$ has codimension $2$, the divisor $D_p \subset U_p$ is Cartier and 
$U_p$ is ${\rm S}_2$.  
The open immersion $\tilde{\iota}_p \colon U''_p \hookrightarrow \tilde{U}_p$ also induces 
a restriction homomorphism 
$(\tilde{\iota}_p)^* \colon T^1_{(\tilde{U}_p, \tilde{D}_p +E_p)}  \rightarrow T^1_{(U''_p, D''_p)}$.  
These fit in the following diagram; 
\[
\xymatrix{
T^1_{(\tilde{U}_p, \tilde{D}_p +E_p)} \ar[r]^{(\tilde{\iota}_p)^*} \ar[rd]^{(\mu_p)_*} & T^1_{(U''_p, D''_p)} \\ 
 & T^1_{(U_p, D_p)} \ar[u]^{\iota_p^*}}, 
\]
where $(\mu_p)_*$ is the blow-down homomorphism as in Proposition \ref{pairblowdownprop}. 

Since $\iota_p^*$ is injective, we can regard $T^1_{(U_p, D_p)} \subset T^1_{(U''_p, D''_p)}$ and 
we obtain the relation 
\[
\Image (\tilde{\iota}_p)^* = \Image (\mu_p)_*. 
\]

Let $f_p \in \mathcal{O}_{U_p, p}$ be the defining equation of $D_p \subset U_p$. 
We have a description 
\[
T^1_{(U_p, D_p)} \simeq \mathcal{O}_{U_p, p}/ J_{f_p}, 
\]
where $J_{f_p} \subset \mathcal{O}_{U_p,p}$ is the Jacobian ideal determined by $f_p$. 

By the following lemma, we see that elements of $\Image (\mu_p)_*$ 
is induced by functions with orders $2$ or higher.

\begin{lem}\label{nonisolimagem^2lem}
	We have 
$\Image (\tilde{\iota}_p)^* = \Image (\mu_p)_* \subset \mathfrak{m}_p^2 T^1_{(U_p,D_p)}$. 
\end{lem}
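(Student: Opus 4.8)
The equality $\Image(\tilde\iota_p)^* = \Image(\mu_p)_*$ has already been established, so the plan is to prove the inclusion $\Image(\mu_p)_* \subseteq \mathfrak{m}_p^2 T^1_{(U_p,D_p)}$, which is the non-isolated analogue of Lemma \ref{blowdownlem}(ii). First I would shrink $U_p$ so that $\Sing D_p = C \cap U_p$ with $C$ smooth, and factor the resolution as $\mu_p = \mu_1 \circ \rho$, where $\mu_1 \colon U_1 \to U_p$ is the blow-up of the smooth curve $C$ (the first center of $\mu'$ meeting $U_p$, by Proposition \ref{essresolpropnonisol}(iii)) and $\rho \colon \tilde U_p \to U_1$ is the remaining part. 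Since $\mu_1$ blows up a smooth curve in a smooth variety, $U_1$ is smooth and $\rho_* \colon \Def_{(\tilde U_p, \tilde D_p + E_p)} \to \Def_{(U_1, D_1 + F_1)}$ is a well-defined pair blow-down in the sense of Proposition \ref{pairblowdownprop}, where $F_1$ is the $\mu_1$-exceptional divisor and $D_1$ the strict transform of $D_p$. As $(\mu_p)_* = (\mu_1)_* \circ \rho_*$, it suffices to show $\Image\bigl((\mu_1)_* \colon T^1_{(U_1, D_1 + F_1)} \to T^1_{(U_p, D_p)}\bigr) \subseteq \mathfrak{m}_p^2 T^1_{(U_p, D_p)}$. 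Writing $m := \mult_C D_p \geq 2$ (as $C \subseteq \Sing D$), we have $\mu_1^* D_p = D_1 + m F_1$.

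Next I would run the blow-down argument of Lemma \ref{blowdownlem}(ii) in this local setting. Given $\eta_1 \in T^1_{(U_1, D_1 + F_1)}$, its blow-down is a deformation $\mathcal D_p = (f_p + t g_p = 0) \subset U_p \times \Spec A_1$ of $D_p$; the total space stays trivial because $U_p$ is smooth and Stein, so $T^1_{U_p} = H^1(U_p, \Theta_{U_p}) = 0$. The decisive point, exactly as in Lemma \ref{blowdownlem}(ii), is that the exceptional divisor does not deform: since $\mathcal N_{F_1/U_1} = \mathcal O_{F_1}(-1)$ is $\mu_1$-anti-ample on the $\mathbb{P}^1$-bundle $F_1 \to C$, we have $H^0(F_1, \mathcal N_{F_1/U_1}) = 0$, so $F_1$ remains fixed throughout. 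Consequently the blow-down satisfies $\mu_1^*\mathcal D_p = \mathcal D_1 + m F_1$ with $F_1$ fixed and $\mathcal D_1$ a flat deformation of $D_1$; in other words $\mu_1^* g_p$ is divisible by the $m$-th power of the defining function of $F_1$.

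Finally I would extract the multiplicity bound by a chart computation. Choosing coordinates with $C = \{y = z = 0\}$ and working in the chart $\mu_1(x, y, z') = (x, y, y z')$, where $F_1 = (y = 0)$, the divisibility $\mu_1^* g_p = y^m g_1$ forces every monomial $g_{ij}(x)\, y^i z^j$ of $g_p$ to satisfy $i + j \geq m$; hence $\mult_C g_p \geq m \geq 2$, and since $p \in C$ this gives $\mult_p g_p \geq 2$, i.e.\ $g_p \in \mathfrak m_p^2$. Therefore the class of $\eta$ lies in $\mathfrak m_p^2 T^1_{(U_p, D_p)}$, as required. I expect the main obstacle to be the second paragraph: verifying rigorously that the exceptional divisor stays fixed under the blow-down, so that the multiplicity $m$ along $C$ is genuinely preserved, since a priori the pair deformation on $U_1$ could move both $F_1$ and the blow-up center. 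The vanishing $H^0(F_1, \mathcal N_{F_1/U_1}) = 0$ coming from $\mu_1$-ampleness of $-F_1$ is what rules this out, mirroring the role of the Fano/ampleness input in Lemma \ref{blowdownlem}.
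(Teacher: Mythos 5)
Your proof is correct in substance but takes a genuinely different route from the paper's. The paper does not factor out the first blow-up: it uses that, near $p \in C \setminus Z$, the whole resolution $\mu_p$ is locally a product $\mu_S \times \id \colon \tilde S_p \times \Delta^1 \to S_p \times \Delta^1$ of a surface resolution with the identity along $C$; it then extends $\tilde\eta_p$ to a deformation over a disc with trivial total space $\tilde U_p \times \Delta^1$, blows down, and reads off from $(\mu_p \times \id)^*\mathcal D_p = \tilde{\mathcal D}_p + \sum m_i \mathcal E_{i,p}$ that $\mathcal D_{p,t}$ stays singular along $C_p$ for every $t$, which forces the inducing function to lie in $\mathfrak m_p^2$. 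You instead isolate the single blow-up $\mu_1$ of the smooth curve $C$ (legitimate by Proposition \ref{essresolpropnonisol}(iii) after shrinking $U_p$), stay at first order over $\mathbb{C}[t]/(t^2)$, and extract the multiplicity bound from divisibility of $\mu_1^* g_p$ by the $m$-th power of the equation of $F_1$, verified in charts. This is closer in spirit to Lemma \ref{blowdownlem}(ii), avoids both the product structure of the full resolution and the extension over a disc, and yields the slightly stronger conclusion $g_p \in I_C^m$. Both arguments ultimately rest on the same mechanism: the blow-down cannot decrease the multiplicity $m = \mult_C D_p \ge 2$ along $C$.

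One verification is missing from your second paragraph, precisely at the spot you flag as the main obstacle. The vanishing $H^0(F_1, \mathcal N_{F_1/U_1}) = 0$ freezes $F_1$ only after you know that the ambient first-order deformation $\mathcal U_1$ of $U_1$ is trivial, since it classifies embedded deformations inside a \emph{fixed} total space; the vanishing you actually check, $T^1_{U_p} = H^1(U_p, \Theta_{U_p}) = 0$, concerns the blown-down space and is not what is needed here. You must also prove $T^1_{U_1} = H^1(U_1, \Theta_{U_1}) = 0$, the analogue of Lemma \ref{blowdownlem}(i) (which the paper establishes there via Lemma \ref{QFujinotoric} for its toric weighted blow-ups). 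This does hold in your setting and is routine: $H^1(U_p, (\mu_1)_*\Theta_{U_1}) = 0$ because $U_p$ is Stein, and $R^1(\mu_1)_*\Theta_{U_1} = 0$ because the restriction of $\Theta_{U_1}$ to a fibre $\mathbb{P}^1$ of $F_1 \to C$ is an extension of $\mathcal O \oplus \mathcal O(-1)$ by $\mathcal O(2)$, so $H^1$ vanishes on the fibres and on their infinitesimal thickenings. With this supplement your argument goes through.
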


\begin{proof}
	By Proposition \ref{essresolpropnonisol} (iii), 
	we see that $\mu_p \colon \tilde{U}_p \rightarrow U_p$ 
	is a composition of blow-ups 
	\[
	\tilde{U}_p = U_{k,p} \xrightarrow{\mu_{k-1,p}} U_{k-1,p} \rightarrow \cdots \rightarrow U_{1,p} \xrightarrow{\mu_{0,p}} U_{0,p} = U_p, 
	\]
	where $\mu_{i,p} \colon U_{i+1,p} \rightarrow U_{i,p}$ is a blow-up of 
	a smooth curve $C_{i,p}$ for $i=0, \ldots, k-1$. 
	Since we have $\Image (\mu_p)_* \subset \Image (\mu_{0,p})_*$, 
	it is enough to show that 
	\[
	\Image (\mu_{0,p})_* \subset  \mathfrak{m}_p^2 T^1_{(U_p,D_p)}. 
	\]
	
	Let $D_{1,p} \subset U_{1,p}$ be the strict transform of $D_p \subset U_p$ and 
	$E_{1,p} \subset U_{1,p}$ be the $\mu_{0,p}$-exceptional divisor.
	Let $\tilde{\eta}_p \in T^1_{(U_{1,p}, D_{1,p}+E_{1,p})}$
	and $C_p := C_{0,p}$. 
	Let $E^{[1]}_{1,p}$ be the first order deformation of $E_{1,p}$ induced by 
	$\tilde{\eta}_p$. 
	By taking the push-forward of the ideal sheaf of $E_{1,p}^{[1]} \subset U_{1,p}^{[1]}$, 
	 we obtain  
	a first order deformation $C^{[1]}_p$ of $C_p$.  
	Let $\eta_p := (\mu_{0,p})_* (\tilde{\eta}_p) \in T^1_{(U_p, D_p)}$ which 
	induces a deformation $(U^{[1]}_p, D^{[1]}_p)$ of $(U_{p}, D_{p})$ over $A_1:= 
	\mathbb{C}[t]/(t^2)$. This can be lifted to a deformation 
	$(\mathcal{U}_p, \mathcal{D}_p)$ of $(U_p, D_p)$ over $\Delta^1$ such that 
	$\mathcal{U}_p \simeq U_p \times \Delta^1$. 
	We can choose $\mathcal{D}_p$ so that $\mathcal{D}_p$ contains a deformation of $C_p$ 
	as follows.
	
	Since $C_{p}$ can be written as $C_p = (x_p = y_p =0) \subset U_p$ for some regular equations 
	$x_p, y_p$ on $U_p$, 
		we see that the deformation $C^{[1]}_p$ can be extended to  
		a deformation $\mathcal{C}_p$ of $C_p$ over $\Delta^1$. 
We have 
\begin{equation}\label{eqn:pullbackD_p}
\mu_p^* D_p = D_{1,p} + m_1 E_{1,p}
\end{equation} 
for some positive integer $m_1 \ge 2$  
	since $D_p$ is singular along $C_p$ by the property (iii) in Proposition \ref{essresolpropnonisol}.  
		 Note that $C^{[1]}_p \subset D^{[1]}_p$ since we construct $D^{[1]}_p$ by 
		\[
		(\mu_{0,p})_* \mathcal{O}_{U^{[1]}_{1,p}}(-D^{[1]}_{1,p} -m_1 E^{[1]}_{1,p}) = 
		\mathcal{O}_{U^{[1]}_p}(-D^{[1]}_p).  
		\]
	Thus we can choose a lifting $\mathcal{D}_p$ of $D_p^{[1]}$ 
	such that $\mathcal{C}_p \subset \mathcal{D}_p$.	 
		
		Let $\nu_p \colon \mathcal{U}_{1,p} \rightarrow \mathcal{U}_p$ 
		be the blow-up of $\mathcal{C}_p$ 
		and $\mathcal{D}_{1,p} \subset \mathcal{U}_{1,p}$ 
		be the strict transform of $\mathcal{D}_p \subset \mathcal{U}_p$. 
		Let $\mathcal{E}_{1,p} \subset \mathcal{U}_{1,p}$ be the $\nu_p$-exceptional divisor. 
		We see that 
	$(\mathcal{U}_{1,p}, \mathcal{D}_{1,p} + \mathcal{E}_{1.p})$ is a deformation of $(U_{1,p}, D_{1,p} +E_{1,p})$ over $\Delta^1$. 
	We have 
	\[
	\nu_p^* \mathcal{D}_p = \mathcal{D}_{1,p} + m_1 \mathcal{E}_{1,p} 
	\] 
	since, by restricting the above equality to $U_{1,p}$, we obtain (\ref{eqn:pullbackD_p}). 
	
	Thus we see that the fiber $\mathcal{D}_{p,t}$ of $\mathcal{D}_p \rightarrow \Delta^1$ 
	is singular along the fiber $\mathcal{C}_{p,t}$ of $\mathcal{C}_p \rightarrow \Delta^1$ over $t \in \Delta^1$. 
	Hence $\mathcal{D}_p$ should be induced by a function $h_p \in \mathfrak{m}_p^2$. 
	\end{proof}

As a summary of Lemma \ref{kerimlemnonisol} and \ref{nonisolimagem^2lem}, we obtain the relation 
\begin{equation}\label{summaryrelation}
\Ker \phi_p \subset \Image (\tilde{\iota}_p)^* \subset \mathfrak{m}_p^2 T^1_{(U_p,D_p)}. 
\end{equation}

\vspace{5mm}

By using these ingredients, we prove Theorem \ref{nonisolsimulqsm} in the following.

\begin{proof}[Proof of Theorem \ref{nonisolsimulqsm}] 
	We continue to use the same notations as above. 
	Let \[
	\eta_C \in H^1(U_C'', \Omega^2_{U_C''}(\log D_C'')) \simeq T^1_{(U_C'', D_C'')}
	\] be the element 
	which induces the deformation $(\mathcal{U}_C, \mathcal{D}_C) \rightarrow \Delta^1$ as in the assumption of Theorem \ref{nonisolsimulqsm}. 
	Let $p \in C \setminus Z$ and $\iota_{C,p} \colon U_p \hookrightarrow U_C$ an open immersion and 
	consider the element $\iota_{C,p}^* (\eta_C) \in H^1(U_p'', \Omega^2_{U_p''}(\log D_p''))$, 
	where the homomorphism $\iota_{C,p}^*$ is the one appeared in the diagram (\ref{phiCphipdiag}). 
	Note that $\iota_{C,p}^* (\eta_C)$ induces a smoothing of $D_p := D \cap U_p$. 
	By this and the relation (\ref{summaryrelation}), we see that $\iota_{C,p}^* (\eta_C) \notin \Ker \phi_p$. 
Note that $H^2(\tilde{X}, \Omega^2_{\tilde{X}}(\log \tilde{D}+E)) =0$ by the mixed Hodge theory on an open variety as in the proof of Theorem \ref{maintheorem} 
since $\tilde{X} \setminus (\tilde{D} \cup E) \simeq X \setminus D$ is affine. 
Hence, by the diagram (\ref{minadiagnonisol}),  there exists $\eta \in H^1(X'', \Omega^2_{X''}(\log D''))$ such that 
$\varphi_C^{-1}(\phi_C(\eta_C)) = \psi( \eta)$. 
We see that 
\begin{equation}\label{etanotinm^2} 
\iota_{C,p}^* (\iota_C^*(\eta)) \notin \mathfrak{m}_p^2 T^1_{(U_p, D_p)}.
\end{equation} 
Indeed, we have $\iota_{C,p}^* (\iota_C^*(\eta)-\eta_C) \in \Ker \phi_p \subset 
\Image (\mu_p)_* \subset \mathfrak{m}_p^2 T^1_{(U_p, D_p)}$ by Lemmas \ref{kerimlemnonisol} and \ref{nonisolimagem^2lem}. 
By the unobstructedness of deformations of $(X, D)$ \cite[Theorem 2.17]{Sano},  
we have a deformation $(\mathcal{X}, \mathcal{D}) \rightarrow \Delta^1$  of $(X,D)$ induced by $\eta$. 
   By (\ref{etanotinm^2}) and Lemma \ref{vistoli}, we see that $\mathcal{D}_t$ has only isolated singularities for $t \neq 0$. 
   Hence, by applying Theorem \ref{simulqsmisolthmintro} to $(\mathcal{X}_t, \mathcal{D}_t)$, 
   we finally obtain a simultaneous $\mathbb{Q}$-smoothing of $(X,D)$. 
	\end{proof}

\begin{rem}
	It is reasonable to assume the existence of a reduced elephant. 
	Actually, Alexeev proved that, if a $\mathbb{Q}$-Fano $3$-fold $X$ is $\mathbb{Q}$-factorial and its Picard number is $1$, then 
	there exists a reduced and irreducible elephant on $X$ (\cite[Theorem (2.18)]{AlexeevElephant}). 
	\end{rem}

\begin{rem}
	The assumption of Theorem \ref{nonisolsimulqsm} is satisfied if $|{-}K_{U_C}|$ contains a normal element. 
	For example, this happens if $C \simeq \mathbb{P}^1$ and it is contracted by some extremal contraction (\cite[(1.7)]{kollarmoriflip}). 
	\end{rem}


\section{Appendix: Existence of a good weighted blow-up}

Let $U= \mathbb{C}^3$ and $0 \in D \subset U$ a normal divisor with a non-Du Val singularity at $0 \in D$. 
As Lemmas \ref{nonGorwtlem} and \ref{A12-wtdblowup}, we can find a good weighted blow-up as follows. 
Although we do not need these results in this paper, we treat this for possible use for another problem. 

The following is an easiest case where a singularity on a divisor is a hypersurface singularity of multiplicity $3$ or higher.  

\begin{lem}\label{mult3U_1lemma}
	Let $U:= \mathbb{C}^3$ and  
	 $ D \subset U$ a divisor with an isolated singularity at $0$.  
	Assume that $m_D:= \mult_0 D \ge 3$. 
	Let $\mu_1 \colon U_1 \rightarrow U$ be the blow-up at the origin $0$
	and $E_1$ its exceptional divisor.

	Then the discrepancy $a(E_1, U, D)$ satisfies   
	\begin{equation}\label{statementmult3}
	a(E_1, U, D) = 2- m_D \le -1.  
	\end{equation} 
	\end{lem}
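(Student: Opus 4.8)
The plan is to realize the blow-up $\mu_1$ at the origin as the weighted blow-up of $U = \mathbb{C}^3$ with weights $v_1 = (1,1,1)$ (the case $r=1$ of Section \ref{weightedblupsection}) and then apply the two formulas (\ref{discrepancywtblup}) and (\ref{stricttrfwtblup}) established there. Write $f \in \mathcal{O}_{U,0} = \mathbb{C}[x,y,z]$ for a local defining equation of $D$ at the origin. The only observation needed at the outset is that the $v_1$-weight of $f$ coincides with the ordinary multiplicity: since all three weights equal $1$, we have $\wt_{v_1}(f) = \min \{ i+j+k \mid f_{ijk} \neq 0 \} = \mult_0 D = m_D$.

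First I would record the canonical discrepancy. Substituting $r=1$, $n=3$ and $b_1 = b_2 = b_3 = 1$ into (\ref{discrepancywtblup}) gives
\[
K_{U_1} = \mu_1^* K_U + (1+1+1-1) E_1 = \mu_1^* K_U + 2 E_1.
\]
Next, from (\ref{stricttrfwtblup}) together with the identification $\wt_{v_1}(f) = m_D$ above, the strict transform $D_1$ of $D$ satisfies
\[
\mu_1^* D = D_1 + m_D E_1.
\]

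Then I would combine these two identities. Subtracting, one obtains
\[
K_{U_1} + D_1 = \mu_1^*(K_U + D) + (2 - m_D) E_1,
\]
which, by the definition of the discrepancy of $E_1$ with respect to the pair $(U,D)$, yields $a(E_1, U, D) = 2 - m_D$. Finally, the hypothesis $m_D \ge 3$ gives $2 - m_D \le -1$, establishing (\ref{statementmult3}).

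This argument is entirely computational and presents no genuine obstacle; the single point requiring a moment's care is the identification $\wt_{v_1}(f) = \mult_0 D$. This is immediate once one notes that the standard blow-up of a smooth point is exactly the weighted blow-up with all weights equal to one, so that the $v_1$-weight degenerates to the ordinary total degree and hence to the multiplicity of $D$ at $0$.
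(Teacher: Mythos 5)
Your proof is correct and follows essentially the same route as the paper, which simply records the identities $K_{U_1} = \mu_1^* K_U + 2E_1$ and $D_1 = \mu_1^* D - m_D E_1$ and subtracts; your only addition is to derive these from the weighted blow-up formulas with weights $(1,1,1)$, which is a fine way to justify them.
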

	 
\begin{proof}
	This follows since we have $K_{U_1} = \mu_1^* K_U +2 E_1$ and $D_1 = \mu_1^* D - m_D E_1$. 
	\end{proof}

We use the following notion of right equivalence (\cite[Definition 2.9]{GLSintro}).

\begin{defn}
	Let $\mathbb{C}\{x_1,\ldots, x_n \}$ be the convergent power series ring of $n$ variables. 
	Let $f,g \in \mathbb{C} \{x_1,\ldots, x_n \}$. 
	
	$f$ is called {\it right equivalent} to $g$ if there exists an automorphism $\varphi$ of 
	$\mathbb{C} \{x_1, \ldots , x_n \}$ such that $\varphi(f) = g$.  
	We write this as $f \overset{r}{\sim} g$. 
	\end{defn} 

The following double point in a smooth $3$-fold is actually the most tricky case. 

\begin{lem}\label{Gorwtbluplem}
	Let $0 \in D:= (f=0) \subset \mathbb{C}^3=: U$ be a divisor such that $\mult_0 D =2$ and $0 \in D $ is 
	not a Du Val singularity. 
	
	Then there exists a birational morphism $\mu_1 \colon U_1 \rightarrow U$ which is a weighted blow-up 
	of weights $(3,2,1)$ or $(2,1,1)$ for a suitable coordinate system on $U$ such that 
	the discrepancy $a(E_1, U, D)$ of the $\mu_1$-exceptional divisor $E_1$ satisfies 
	\[
	a(E_1, U, D) \le -1. 
	\]
	\end{lem}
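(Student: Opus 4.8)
The plan is to convert the statement into a single discrepancy computation and then read off the weights from a normal form for $f$. By the formulas (\ref{discrepancywtblup}) and (\ref{stricttrfwtblup}), applied with $r=1$ to $U=\mathbb{C}^3$, a weighted blow-up $\mu_1 \colon U_1 \rightarrow U$ with weights $(a,b,c)$ satisfies $K_{U_1}+D_1 = \mu_1^*(K_U+D) + \bigl((a+b+c-1)-\wt_{(a,b,c)}(f)\bigr)E_1$, so that $a(E_1,U,D) = (a+b+c-1) - \wt_{(a,b,c)}(f)$. Hence it suffices to produce, after an analytic change of coordinates, a weight $(a,b,c) \in \{(2,1,1),(3,2,1)\}$ on $(x,y,z)$ with $\wt_{(a,b,c)}(f) \ge a+b+c$.

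First I would normalize $f$. Since $\mult_0 D = 2$ and $0 \in D$ is not Du Val, the rank of the Hessian of $f$ at $0$ must equal $1$: rank $3$ gives an $A_1$-singularity and rank $2$ gives some $A_k$, both of which are Du Val. By the splitting lemma (\cite{GLSintro}) there are then analytic coordinates with $f \overset{r}{\sim} x^2 + \tilde f(y,z)$, where $\tilde f \in \mathfrak{m}^3$; since $D$ is normal, $\tilde f$ defines an isolated plane curve singularity, so $\tilde f \neq 0$.

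If $\mult_0 \tilde f \ge 4$, I would take weights $(2,1,1)$: then $\wt(x^2)=4$ and $\wt(\tilde f) = \mult_0 \tilde f \ge 4$, so $\wt_{(2,1,1)}(f)=4=2+1+1$ and $a(E_1,U,D)=3-4=-1$. If $\mult_0 \tilde f = 3$, the plan is to show that its cubic tangent cone $\tilde f_3$ is a perfect cube: if $\tilde f_3$ had a simple tangent line, then factoring out that line and completing the square in the remaining double direction would identify $0 \in D$ with a $D_n$-singularity, contradicting the hypothesis. Thus, after a linear change, $\tilde f_3 = y^3$, and I would take weights $(3,2,1)$, under which $\wt(x^2)=\wt(y^3)=6$. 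The only monomials $y^i z^j$ of degree $\ge 4$ whose weight $2i+j$ is less than $6$ are $z^4,\ yz^3,\ z^5$; but if $f$ contained any of these, the finite determinacy (quasi-homogeneity) of the simple singularities would force $0 \in D$ to be of type $E_6$, $E_7$, or $E_8$, again contradicting non-Du-Val-ness. Hence $\wt_{(3,2,1)}(f)=6=3+2+1$ and $a(E_1,U,D)=5-6=-1$.

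The hard part is the case $\mult_0 \tilde f = 3$, and within it the two classification inputs: (i) that every multiplicity-three tangent cone possessing a simple tangent line yields a $D_n$-singularity, so that non-Du-Val-ness forces the perfect-cube tangent cone $y^3$; and (ii) that the appearance of any one of $z^4,\ yz^3,\ z^5$ pins down an $E_6$-, $E_7$-, or $E_8$-singularity. Both steps rest on the normal-form theory of simple hypersurface singularities rather than on formal weight bookkeeping, so the care lies in invoking determinacy correctly. By contrast, the case $\mult_0 \tilde f \ge 4$ and the bookkeeping with (\ref{discrepancywtblup}) and (\ref{stricttrfwtblup}) are routine.
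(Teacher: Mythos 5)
Your proposal is correct and takes essentially the same route as the paper's proof: split off $x^2$ by the splitting lemma, use the $(2,1,1)$-blow-up when $\mult_0 \tilde f \ge 4$, and in the multiplicity-three case invoke the classification of simple singularities from \cite{GLSintro} to force the tangent cone $y^3$ (excluding $D_n$) and to rule out the monomials $z^4$, $yz^3$, $z^5$ (excluding $E_6$, $E_7$, $E_8$), so that the $(3,2,1)$-blow-up gives discrepancy $-1$. Your up-front reformulation of the target as $\wt_{(a,b,c)}(f) \ge a+b+c$ and your direct enumeration of the sub-weight monomials merely streamline the paper's step-by-step normalization of the $4$- and $5$-jets; the substance is identical.
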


\begin{proof}
	By taking a suitable coordinate change, we can write $f= x^2 +g(y,z)$ for some $g(y,z) \in \mathbb{C}[y,z]$ which defines 
	a reduced curve $(g(y,z) =0) \subset \mathbb{C}^2$.  
	We see that $\mult_0 g(y,z) \ge 3$ since, if $\mult_0 g(y,z) =2$, we see that $D$ has a Du Val singularity of type A at $0$.   
We can write $g(y,z) = \sum g_{i,j} y^i z^j$ for $g_{i,j} \in \mathbb{C}$. 
We divide the argument with respect to $\mult_0 g(y,z)$.

\vspace{5mm}

({\bf Case 1}) Consider the case $\mult_0 g(y,z) \ge 4$. 
Let $\mu_1 \colon U_1 \rightarrow U$ be the weighted blow-up with weights $(2,1,1)$ and $D_1 \subset U_1$  
the strict transform of $D$. 
Then we have 
\[
K_{U_1} = \mu_1^* K_U + 3 E_1, 
\]
\[
\mu_1^* D = D_1 + m_D E_1, 
\]
where $m_D = \min \{4, \min\{i+j \mid g_{i,j} \neq 0 \} \}$.  
By the assumption $\mult_0 g(y,z) \ge 4$, we see that $g_{i,j} \neq 0$ only if $i+j \ge 4$. 
Thus we see that $m_D = 4$.  
Thus we obtain  
\[
K_{U_1} +D_1 = \mu_1^* (K_U+D) - E_1  
\]
and the weighted blow-up $\mu_1$ satisfies the required property. 

\vspace{5mm}

({\bf Case 2}) Consider the case $\mult_0 g(y,z) =3$. 
Let $g^{(k)}:= \sum_{i+j\le k} g_{i,j} y^i z^j$ be the {\it $k$-jet} of $g$. 
We divide this into two cases with respect to $g^{(3)}$. 
The proof uses the arguments in the classification of simple singularities of type $D$ and $E$
(\cite[Theorem 2.51, 2.53]{GLSintro}). 

(2.1) Suppose that $g^{(3)}$ factors into at least two different factors. 
By \cite[Theorem 2.51]{GLSintro}, we see that $g \overset{r}{\sim} y(z^2 +y^{k-2})$ for some $k \ge 4$. 
Thus $0 \in D$ is a Du Val singularity of type $D_k$. 
This contradicts the assumption.  

(2.2) Suppose that $g^{(3)}$ has a unique linear factor. 
We can write $g^{(3)} = y^3$ by a suitable coordinate change. 
By the proof of \cite[Theorem 2.53]{GLSintro}, the $4$-jet $g^{(4)}$ can be written as 
\[
g^{(4)} = y^3 + \alpha z^4 + \beta yz^3
\]
for some $\alpha, \beta \in \mathbb{C}$. 

(i) If $\alpha \neq 0$, we obtain $g \overset{r}{\sim} y^3 +z^4$ by the same argument as \cite[Theorem 2.53, Case $E_6$]{GLSintro}. 
Thus we see that $0 \in D$ is a Du Val singularity of type $E_6$. 

(ii) If $\alpha =0$ and $\beta \neq 0$, we obtain $g \overset{r}{\sim} y^3 +yz^3$ by the same argument as \cite[Theorem 2.53, Case $E_7$]{GLSintro}. 
Thus we see that $0 \in D$ is a Du Val singularity of type $E_7$.

(iii) Now assume that $\alpha = \beta =0$. In this case, the $5$-jet $g^{(5)}$ can be written as 
\[
g^{(5)} = y^3 + \gamma z^5 + \delta y z^4  
\]
 for some $\gamma, \delta \in \mathbb{C}$. 

If $\gamma \neq 0$, we obtain $g \overset{r}{\sim} y^3 +z^5$ by the same argument as \cite[Theorem 2.53, Case $E_8$]{GLSintro}. 
Thus we see that $0 \in D$ is a Du Val singularity of type $E_8$. 

If $\gamma =0$ and $\delta \neq 0$, we can write $g = y^3 + yz^4 + h_6(y,z)$ for some $h_6(y,z) \in \mathbb{C}[y,z]$ such that 
$\mult_0 h_6(y,z) \ge 6$. 
Let $\mu_1 \colon U_1 \rightarrow U$ be the weighted blow-up with weights $(3,2,1)$ on $(x,y,z)$ 
and $E_1$ its exceptional divisor. 
Then we can calculate 
\[
K_{U_1} = \mu_1^*K_U + 5 E_1, 
\]
\[
\mu_1^* D = D_1 + 6 E_1    
\] 
by the formula  (\ref{stricttrfwtblup}).  
Thus we obtain 
\[
K_{U_1} +D_1 = \mu_1^*(K_U+D) -E_1. 
\]
Hence $\mu_1$ has the required property. 

If $\gamma= \delta =0$, we can write $g = y^3 + h_6$ for some nonzero $h_6$ such that $\mult_0 h(y,z) \ge 6$. 
Let $\mu_1 \colon U_1 \rightarrow U$ be the weighted blow-up with weights $(3,2,1)$ as above. 
We can similarly check that this $\mu_1$ has the required property. 

	\end{proof}

\section*{Acknowledgments}
The author would like to thank Professor Miles Reid for useful comments on the first blow-ups and examples. 
He acknowledges Professor Jonathan Wahl for letting him know the example of equisingular deformation. 
He acknowledges Professor Angelo Vistoli for letting him know Lemma \ref{vistoli}. 
Part of this paper is written during the author's stay in Princeton university. 
He would like to thank Professor J\'{a}nos Koll\'{a}r for useful comments on analytic neighborhoods and nice hospitality. 
Finally, he thanks the anonymous referee for reading the manuscript carefully and pointing out mistakes. 
He is partially supported by Warwick Postgraduate Research Scholarship, Max Planck Institut f\"{u}r Mathematik, JSPS fellowships for Young Scientists and JST Tenure Track Program.


\begin{thebibliography}{}

\bibitem{AlexeevElephant}
V.~Alexeev, 
\emph{General elephants of $\mathbb{Q}$-Fano $3$-folds},  
Compositio Math. 91 (1994), no. 1, 91--116. 


\bibitem{ABR}
S.~Alt{\i}nok, G.~Brown, M.~Reid,
\emph{Fano $3$-folds, K3 surfaces and graded rings.}
Topology and geometry: commemorating SISTAG, 25--53, Contemp. Math., 314, Amer. Math. Soc., Providence, RI, 2002.

\bibitem{BierstoneMilmanMinimal1}
E.~Bierstone, P.~Milman,
\emph{Resolution except for minimal singularities I}, 
Adv. Math. 231 (2012), no. 5, 3022--3053. 


\bibitem{Iano-Fletcher}
A.~Iano-Fletcher, 
\emph{Working with weighted complete intersections}, 
Explicit birational geometry of 3-folds, 101--173, 
London Math. Soc. Lecture Note Ser., 281, Cambridge Univ. Press, Cambridge, 2000. 



\bibitem{Fujino}
O.~Fujino, 
\emph{Multiplication maps and vanishing theorems for toric varieties}, 
Math. Z. 257 (2007), no. 3, 631--641. 


\bibitem{GLSintro}
G.-M.~Greuel, C.~Lossen, E.~Shustin,   
\emph{Introduction to singularities and deformations},
 Springer Monographs in Mathematics. Springer, Berlin, 2007. xii+471 pp.



\bibitem{Hayakawa2}
T.~Hayakawa, 
\emph{Blowing ups of 3-dimensional terminal singularities. II},
 Publ. Res. Inst. Math. Sci. 36 (2000), no. 3, 423--456.

\bibitem{Iskovskikh1}
V.~Iskovskikh
\emph{Fano threefolds. I},  
Izv. Akad. Nauk SSSR Ser. Mat. 41 (1977), no. 3, 516--562, 717. 

\bibitem{Iskovskikh2}
V.~Iskovskih,
\emph{Fano threefolds. II},  
Izv. Akad. Nauk SSSR Ser. Mat. 42 (1978), no. 3, 506--549. 

\bibitem{IPencyclopedia}
V.~Iskovskikh, Y.~Prokhorov, 
\emph{Fano varieties. Algebraic geometry, V}, 1--247, 
Encyclopaedia Math. Sci., 47, Springer, Berlin, 1999. 

\bibitem{kawamatacrelle}
 Y.~Kawamata,
 \emph{Minimal models and the Kodaira dimension of algebraic fiber spaces},
  J. Reine Angew. Math. 363 (1985), 1--46.

\bibitem{kollarmoriflip}
J.~Koll\'{a}r, S.~Mori, 
\emph{Classification of three-dimensional flips.},  
 J. Amer. Math. Soc. 5 (1992), no. 3, 533--703. 

\bibitem{KSB}
J.~Koll\'{a}r, N.~Shepherd-Barron, 
\emph{Threefolds and deformations of surface singularities}, 
Invent. Math. 91 (1988), no. 2, 299--338.

\bibitem{mina}
T.~Minagawa, 
\emph{Deformations of $\mathbb{Q}$-Calabi-Yau $3$-folds and $\mathbb{Q}$-Fano $3$-folds of Fano index $1$}, 
J. Math. Sci. Univ. Tokyo 6 (1999), no. 2, 397--414. 



\bibitem{mori}
S.~Mori, 
\emph{On $3$-dimensional terminal singularities}, 
Nagoya Math. J. 98 (1985), 43--66. 


\bibitem{mukai}
S.~Mukai, 
\emph{New developments in the theory of Fano threefolds: vector bundle method and moduli problems}, 
Sugaku Expositions 15 (2002), no. 2, 125--150. 

\bibitem{Namtop}
Y.~Namikawa, 
\emph{On deformations of Calabi-Yau $3$-folds with terminal singularities}, 
 Topology 33 (1994), no. 3, 429--446.

\bibitem{NamFano}
Y.~Namikawa, 
\emph{Smoothing Fano $3$-folds}, 
J. Algebraic Geom. 6 (1997), no. 2, 307--324. 

\bibitem{NamSt}
Y.~Namikawa, J.~Steenbrink,
\emph{Global smoothing of Calabi-Yau threefolds}, 
Invent. Math. 122 (1995), no. 2, 403--419. 


\bibitem{ReidKawamata} 
M.~Reid, 
\emph{Projective morphisms according to Kawamata}, Warwick preprint, 1983,
www.maths.warwick.ac.uk/~miles/3folds/Ka.pdf


\bibitem{YPG}
M.~Reid, 
\emph{Young person's guide to canonical singularities}, 
 Algebraic geometry, Bowdoin, 1985 (Brunswick, Maine, 1985), 345--414, 
Proc. Sympos. Pure Math., 46, Part 1, Amer. Math. Soc., Providence, RI, 1987.



\bibitem{Sano}
T.~Sano, 
\emph{On deformations of Fano threefolds with terminal singularities}, 
arXiv:1203.6323v5. 

\bibitem{Sanonote}
T.~Sano, 
\emph{On deformations of Fano threefolds with terminal singularities II}, arXiv: 1403.0212. 


\bibitem{Sernesi}
E.~Sernesi, 
\emph{Deformations of algebraic schemes}, 
Grundlehren der Mathematischen Wissenschaften,  
334. Springer-Verlag, Berlin, 2006. xii+339 pp.



\bibitem{Shokurov}
V.~Shokurov, 
\emph{Smoothness of a general anticanonical divisor on a Fano variety}, 
 Izv. Akad. Nauk SSSR Ser. Mat. 43 (1979), no. 2, 430--441.




\bibitem{TakagiDuVal}
H.~Takagi, 
\emph{Classification of primary $\mathbb{Q}$-Fano threefolds with anti-canonical Du Val K3 surfaces. I},
 J. Algebraic Geom. 15 (2006), no. 1, 31--85.

\bibitem{takagi2}
H.~Takagi, 
\emph{On classification of $\mathbb{Q}$-Fano $3$-folds of Gorenstein index $2$. II},  
Nagoya Math. J. 167 (2002), 157--216.


\bibitem{wahl}
J.~Wahl, 
\emph{Equisingular deformations of normal surface singularities. I}, 
Ann. of Math. (2) 104 (1976), no. 2, 325--356. 

\bibitem{wahlpersonal}
J.~Wahl, \emph{Personal communication}. 

\end{thebibliography}
\end{document}